%% file: main.tex
\documentclass[preprint,12pt]{elsarticle}

\usepackage{amsmath,amssymb,amsthm,mathtools}
\usepackage{booktabs}
\usepackage{enumitem}
\usepackage{graphicx}
\usepackage[Export]{adjustbox}
\usepackage{microtype}
\usepackage{placeins}
\usepackage{tikz}
\usetikzlibrary{arrows.meta,positioning,shapes.geometric}

\journal{Journal of Computational Physics}

\newtheorem{definition}{Definition}[section]
\newtheorem{lemma}[definition]{Lemma}
\newtheorem{proposition}[definition]{Proposition}
\newtheorem{theorem}[definition]{Theorem}

\newtheorem{remark}[definition]{Remark}

\newcommand{\dd}{\,\mathrm{d}}
\newcommand{\jump}[1]{\left[\!\left[#1\right]\!\right]}
\newcommand{\mean}[1]{\left\{\!\left\{#1\right\}\!\right\}}
\newcommand{\uhat}{\widehat U}
\newcommand{\fhat}{\widehat F}
\newcommand{\vhat}{\widehat v}
\newcommand{\mhat}{\widehat m}

\newcommand{\Gset}{\mathcal G}
\newcommand{\LLF}{\mathrm{LLF}}
\newcommand{\EC}{\mathrm{EC}}

\allowdisplaybreaks
\adjustboxset*{max height=0.78\textheight}

\makeatletter
\renewenvironment{figure}[1][!htbp]
  {\@float{figure}[!htbp]}
  {\end@float}
\makeatother

\let\FloatBarrier\relax

\begin{document}

\begin{frontmatter}

\title{Entropy-Stable and Physical-Constraint-Preserving DGSEM for
Symmetry-Reduced General-Relativistic Hydrodynamics on Stationary
Spacetimes\tnoteref{dedication}}
\tnotetext[dedication]{This paper is dedicated to Peter D.~Lax.}

\author[aff1]{Guosheng Fu\corref{cor1}}
\ead{gfu@nd.edu}
\cortext[cor1]{Corresponding author}
\author[aff2]{Jian-Guo Liu}
\address[aff1]{Department of Applied and Computational Mathematics and
Statistics, University of Notre Dame, Notre Dame, IN 46556, USA}
\address[aff2]{Departments of Mathematics and Physics, Duke University,
Durham, NC 27708, USA}

\begin{abstract}
We develop an entropy-stable and physical-constraint-preserving discontinuous
Galerkin spectral element method for symmetry-reduced general-relativistic
hydrodynamics on prescribed stationary spacetimes.  Using a local orthonormal
transformation, the fluid variables are expressed in a form for which the
relativistic hydrodynamic algebra and the admissible set are independent of
the spatial metric, while the spacetime geometry enters through stationary
coefficients.  This separation allows entropy-conservative
special-relativistic fluxes to be combined with a compatible discretization of
the geometric source terms.  On affine tensor-product meshes, the resulting
DGSEM is conservative and satisfies a semidiscrete entropy inequality, while
the transformed variables provide a convex framework for physical-constraint
preservation.

For practical stabilization, we use a geometry-only causal speed that is
sufficient for both classical local Lax--Friedrichs entropy dissipation and the
physical-constraint-preserving Lax--Friedrichs splitting.  The fully discrete
method combines this stabilization with SSP Runge--Kutta time stepping,
oscillation elimination, and conservative local-orthonormal-state scaling.
Numerical experiments
cover smooth and strongly shocked special-relativistic flows, an axisymmetric
jet, stationary Michel accretion, Schwarzschild Bondi--Hoyle flow, and four
Kerr accretion cases.  The results demonstrate the designed
high-order accuracy in smooth regimes and robust performance for demanding
relativistic flows on curved stationary backgrounds.
\end{abstract}

\begin{keyword}
general-relativistic hydrodynamics \sep entropy stability \sep
physical-constraint preservation \sep discontinuous Galerkin spectral
element method \sep flux differencing \sep symmetry reduction
\end{keyword}

\end{frontmatter}

\input{sections/01_introduction.tex}

\input{sections/02_weighted_model.tex}

\input{sections/03_entropy_structure.tex}

\input{sections/04_dgsem.tex}

\input{sections/05_pcp.tex}

\input{sections/06_algorithm.tex}

\input{sections/07_numerics.tex}

\input{sections/08_conclusions.tex}

\appendix

\input{appendices/app_symmetry_reduction.tex}

\input{appendices/app_local_ec.tex}

\input{appendices/app_secondary_algebra.tex}

\input{appendices/app_llf_speed_bound.tex}

\input{appendices/app_simple_zero_treatment.tex}

\section*{Acknowledgments}

The work of G.~Fu was partially supported by the National Science Foundation
under Grant DMS-2410741.

\section*{Declaration of competing interest}

The authors declare that they have no known competing financial interests or
personal relationships that could have appeared to influence the work reported
in this paper.

\section*{Data availability}

Data will be made available on request.

\section*{Declaration of generative AI and AI-assisted technologies in the
manuscript preparation process}

During the preparation of this work, the authors used ChatGPT (OpenAI) and
Codex (OpenAI) to improve the language and readability of the manuscript and
to assist with software development, respectively.  After using these tools,
the authors reviewed and edited the content as needed and take full
responsibility for the content of the published article.

\bibliographystyle{elsarticle-num}
\bibliography{references}

\end{document}

%% file: sections/01_introduction.tex
\section{Introduction}\label{sec:introduction}

General-relativistic hydrodynamics (GRHD) describes relativistic fluids on
curved spacetimes and underlies numerical models of black-hole accretion,
relativistic jets, compact objects, and stellar collapse.  In the standard
$3+1$ Valencia formulation, the fluid equations form a conservative balance
law whose fluxes and sources both depend on the prescribed spacetime geometry
\cite{BanyulsEtAl1997,Font2008}.  High-order discontinuous Galerkin (DG)
methods combine element-local high-order approximation, conservation, and
parallel scalability and have been developed extensively for nonlinear
hyperbolic conservation laws \cite{CockburnShu1998}.  Their use in
relativistic hydrodynamics has also received increasing attention
\cite{RadiceRezzolla2011,Teukolsky2016}.  The central challenge here is to
control nonlinear stability, physical admissibility, and spatially varying
geometry in a mutually compatible way.

Entropy-stable discretizations provide one framework for nonlinear stability.
Entropy-conservative two-point fluxes combined with summation-by-parts (SBP)
flux differencing lead to high-order entropy-stable finite-difference and DG
schemes \cite{Tadmor2003,CarpenterEtAl2014,ChenShu2017,GassnerWintersKopriva2016Split}, and
corresponding constructions are available for special-relativistic
hydrodynamics (SRHD) \cite{BhoriyaKumar2020,DuanTang2019,BiswasKumar2022}.
A complementary requirement is physical admissibility: rest-mass density and
pressure must remain positive and the fluid velocity must remain subluminal.
Physical-constraint-preserving (PCP) methods enforce these conditions through
convexity, Lax--Friedrichs splitting, and conservative scaling limiters
\cite{ZhangShu2010,WuTang2015,QinShuYang2016}.  For GRHD on a prescribed
stationary spacetime, the local
transformation of Wu~\cite{Wu2017} converts the metric-dependent admissible
set into a metric-independent Lorentz-type cone.  Damping-based DG procedures,
including OFDG and OEDG, provide an additional conservative mechanism for
controlling under-resolved oscillations near shocks
\cite{LiuLuShu2022,PengSunWu2025,CaoPengWu2025}, and recent GRHD calculations
demonstrate the effectiveness of combining PCP and OE stabilization in
demanding relativistic flows \cite{CaoPengWu2025}.

The present work develops a compatible entropy-stable and PCP DG spectral
element method (DGSEM) for symmetry-reduced GRHD on a prescribed stationary
spacetime.  Starting from the covariant perfect-fluid equations and the
Valencia formulation, we transform the active momentum variables to a local
orthonormal frame.  The fluid algebra then takes the standard SRHD form, while
the active spacetime geometry appears through stationary coefficients.  When
one spatial direction is suppressed, its metric factor remains in the reduced
conservative measure and source.  Writing $\widehat U$ for the local
orthonormal conservative state, with $I,J\in\{1,2\}$ indexing the active
coordinate directions, define
\[
 j_2:=\sqrt{\det(\gamma_{IJ})},
 \qquad
 W:=j_2\widehat U,
\]
for the intrinsic two-dimensional W-state, the evolved reduced state is
\[
 \mathbb W=wW=wj_2\widehat U,
\]
where $w>0$ denotes the physical reduction weight in the main analysis.

The first central ingredient is a geometry representation shared by the flux
and source discretizations.  We keep the intrinsic stationary W-form geometry
\[
 Y=\left(j_2,A_{11},A_{12},A_{22},C^1,C^2\right)^T
\]
separate from the physical reduction weight $w$ and introduce the complete
coefficients
\[
 \mathcal A=wA,
 \qquad
 \mathcal C=wC.
\]
The entropy-conservative volume flux uses arithmetic averages of these
coefficients, while the geometric source is discretized with the same SBP
operators and nodal geometry.  This matched construction produces the exact
discrete flux--source contraction required by the entropy analysis.  On
conforming affine tensor-product elements, the resulting DGSEM is conservative
and satisfies a semidiscrete entropy inequality with an entropy-dissipative
interface flux.

The second ingredient is a PCP formulation based on the metric-independent
admissible cone supplied by the transformed variables.  We derive a sufficient
forward-Euler condition for preservation of admissible cell averages and use a
conservative scaling limiter adapted to the weighted evolved state.  The
limiter preserves the conservative cell average while enforcing the density,
cone, and relative-margin constraints needed for robust relativistic
calculations.  The detailed limiter is formulated to remain consistent with
the spatially varying geometry and the variables used by the DGSEM.

A third result gives a common stabilization scale for entropy stability and
PCP.  For the classical local Lax--Friedrichs flux, the geometry-only causal
speed
\[
 a_I^{\rm PCP}=|\beta^I|+\alpha\sqrt{\gamma^{II}}
\]
not only satisfies the Lax--Friedrichs splitting required by the PCP analysis,
but also provides sufficient dissipation for the discrete entropy condition;
the proof is given in an appendix.  Accordingly, $a_I^{\rm PCP}$ is the
stability constant used in the numerical experiments.

The fully discrete method combines SSPRK$(3,3)$ time stepping with
OE--PCP stabilization.  For weighted problems, both OE and PCP are formulated
through the regular local state $\widehat U$ and mapped back conservatively to
the evolved state.  Axis cells for which $w$ has a simple zero on the axis
face use the regular treatment in \ref{app:simple-zero-treatment}.

The numerical experiments assess both accuracy and robustness from SRHD to
black-hole accretion.  They recover the designed high-order accuracy for smooth
SRHD and stationary Michel flow and demonstrate robust behavior for
multidimensional shocks, an axisymmetric relativistic jet, Schwarzschild
Bondi--Hoyle accretion, and four Kerr accretion cases.

The analysis is restricted to prescribed stationary spacetime geometry and
conforming affine tensor-product elements in the two active coordinates.  The
semidiscrete entropy theorem concerns the periodic problem; a general entropy
analysis of physical boundary closures and the fully discrete OE/PCP
post-processing is outside the present scope.  Curvilinear computational
meshes, nonconforming interfaces, evolving spacetimes, and three-dimensional
discretizations are natural extensions.

The remainder of the paper is organized as follows.  Section~\ref{sec:wform}
derives the stationary W-form, the symmetry-reduced weighted model, and its
continuous entropy structure.  Section~\ref{sec:dgsem} gives the compatible
DGSEM and semidiscrete entropy analysis.  Section~\ref{sec:pcp} develops the
PCP framework, and Section~\ref{sec:algorithm} specifies the fully discrete
discretization.  Section~\ref{sec:numerics} presents the numerical
validation, followed by the conclusions in Section~\ref{sec:conclusions}; the
appendices collect supporting algebra, the causal-LLF proof, and the
symmetry-boundary implementation details.

%% file: sections/02_weighted_model.tex
\section{GRHD model and symmetry reduction}
\label{sec:wform}

We begin with the covariant GRHD equations on a prescribed
stationary spacetime, introduce the $3+1$ Valencia balance law and its entropy
pair, and then apply the local orthonormal transformation of Wu
\cite{Wu2017} to obtain the $W$-form. We next derive the weighted
two-dimensional reduction used in the numerical analysis, including the
geometric contribution of the suppressed direction and the corresponding
entropy law. Finally, we specialize this framework to planar SRHD,
axisymmetric SRHD, axisymmetric Schwarzschild Bondi--Hoyle accretion, and
equatorial Kerr--Schild GRHD, which serve as the model problems in the
numerical results section. We use the spacetime signature $(-,+,+,+)$ and
geometrized units, $G=c=1$.  In the black-hole models, $M$ denotes the
black-hole mass.  It consequently sets both the spatial and temporal scales:
restoring physical units, one length unit $M$ is $GM_{\rm BH}/c^2$ and one
time unit $M$ is $GM_{\rm BH}/c^3$.  Greek indices $\mu,\nu,\ldots$ denote
spacetime components, Latin indices $i,k,\ell,\ldots$ denote spatial
coordinate components, and $a,b,c,\ldots$ denote components in a local
orthonormal spatial frame. Repeated indices are summed according to the
Einstein summation convention. The presentation below is intentionally
concise; for the $3+1$ decomposition of spacetime and its geometric
foundations, we refer to the original ADM formulation~\cite{ArnowittDeserMisner1962}
and the comprehensive treatment of Gourgoulhon~\cite{Gourgoulhon2007}, while
further details on the Valencia formulation and numerical GRHD can be found in
Banyuls et al.~\cite{BanyulsEtAl1997} and the review of Font~\cite{Font2008}.

\subsection{From covariant GRHD to the W-form}
\label{sec:covariant-grhd}

On a prescribed spacetime $(\mathcal M,g)$, the equations governing a perfect
relativistic fluid are \cite{Font2008}
\begin{equation}
 \nabla_\mu J^\mu=0,
 \qquad
 \nabla_\mu T^{\mu\nu}=0,
 \label{eq:covariant-grhd}
\end{equation}
where $\nabla_\mu$ denotes the Levi--Civita covariant derivative associated
with $g$.  The first equation independently imposes local conservation of
rest mass (or baryon number), whereas the second expresses local conservation
of energy and momentum.  In a coupled Einstein--matter system, the latter
follows from the contracted Bianchi identity and Einstein's field equations;
on the prescribed spacetime considered here, both are imposed as fluid
evolution equations.  Together with the four-velocity normalization and an
equation of state, they close the perfect-fluid model.

The mass current and stress--energy tensor are
\begin{equation}
 J^\mu=\rho u^\mu,
 \qquad
 T^{\mu\nu}=\rho h u^\mu u^\nu+p g^{\mu\nu},
 \qquad
 g_{\mu\nu}u^\mu u^\nu=-1.
 \label{eq:perfect-fluid}
\end{equation}
Here $g^{\mu\nu}$ is the inverse metric, $\rho>0$ is the rest-mass density,
$p>0$ is the pressure, and $u^\mu$ is the fluid four-velocity. For the
Gamma-law equation of state,
\begin{equation}
 p=(\Gamma-1)\rho\epsilon,
 \qquad
 h=1+\epsilon+\frac{p}{\rho}
  =1+\frac{\Gamma p}{(\Gamma-1)\rho},
 \qquad
 1<\Gamma\le2,
 \label{eq:gamma-law}
\end{equation}
where $\epsilon$ is the specific internal energy.

The specific entropy
\[
 s=\log p-\Gamma\log\rho
\]
satisfies $\nabla_\mu(\rho s u^\mu)=0$ for smooth solutions. Across shocks,
the second law of thermodynamics requires
\begin{equation}
 \nabla_\mu(\rho s u^\mu)\ge0
 \label{eq:covariant-entropy-inequality}
\end{equation}
in the distributional sense.

We rewrite the spacetime metric in the standard
$3+1$ (ADM) form \cite{ArnowittDeserMisner1962,Gourgoulhon2007,
BanyulsEtAl1997,Font2008}:
\label{sec:valencia}
\begin{equation}
 g_{\mu\nu}\,\dd x^\mu\dd x^\nu
 =-\alpha^2\dd t^2
 +\gamma_{ik}(\dd x^i+\beta^i\dd t)
                 (\dd x^k+\beta^k\dd t).
 \label{eq:adm-line}
\end{equation}
The lapse $\alpha$, shift $\beta^i$, and spatial metric $\gamma_{ik}$ are the
ADM geometric coefficients. In this work they are prescribed functions of
space and are stationary:
\[
 \partial_t\alpha=\partial_t\beta^i=\partial_t\gamma_{ik}=0.
\]
Set
\begin{equation}
 \gamma=\det(\gamma_{ik}),
 \qquad
 j=\sqrt{\gamma},
 \qquad
 \sqrt{-\det(g_{\mu\nu})}=\alpha j.
 \label{eq:physical-volume-density}
\end{equation}
For the Eulerian velocity $v^i$, define
\begin{equation}
 v_i=\gamma_{ik}v^k,
 \qquad
 L=(1-v^2)^{-1/2},
 \qquad
 v^2=\gamma_{ik}v^iv^k<1,
 \qquad
 \widetilde v^i=v^i-\frac{\beta^i}{\alpha}.
 \label{eq:lorentz-factor}
\end{equation}
The corresponding four-velocity is $u^0=L/\alpha$ and
$u^i=L\widetilde v^i$.

Following the Valencia formulation \cite{BanyulsEtAl1997,Font2008}, introduce
the full-energy conservative state $U=(D,S_1,S_2,S_3,E)^T$, where
\begin{equation}
 D=\rho L,
 \qquad
 S_i=\rho hL^2v_i,
 \qquad
 E=\rho hL^2-p,
 \label{eq:valencia-state}
\end{equation}
with fluxes
\begin{equation}
 F_{\rm V}^i(U)
 =
 \begin{pmatrix}
  D\widetilde v^i\\
  S_k\widetilde v^i+p\delta^i_k\\
  E\widetilde v^i+p v^i
 \end{pmatrix}.
 \label{eq:valencia-flux}
\end{equation}
In these variables, \eqref{eq:covariant-grhd} becomes
\begin{equation}
 \partial_t(jU)
 +\partial_i\!\left(\alpha jF_{\rm V}^i\right)
 =\alpha j\,\mathcal S_{\rm V},
 \label{eq:valencia-balance}
\end{equation}
where the geometric source is
\begin{equation}
 \mathcal S_{\rm V}
 =
 \begin{pmatrix}
  0\\[1mm]
  \left(\dfrac12 T^{\mu\nu}\partial_k g_{\mu\nu}\right)_{k=1}^3\\[2mm]
  T^{\mu0}\partial_\mu\alpha
  -\alpha T^{\mu\nu}\Gamma^0_{\mu\nu}
 \end{pmatrix}.
 \label{eq:valencia-source}
\end{equation}
Here
\[
\Gamma^\sigma_{\mu\nu}
=\frac{1}{2}g^{\sigma\lambda}
\left(
\partial_\mu g_{\nu\lambda}
+\partial_\nu g_{\mu\lambda}
-\partial_\lambda g_{\mu\nu}
\right)
\]
are the Christoffel symbols associated with the metric $g$.

For the Gamma-law gas, the corresponding mathematical entropy pair is
\begin{equation}
 \widehat\eta=-\frac{Ds}{\Gamma-1},
 \qquad
 \eta=j\widehat\eta,
 \qquad
 q_\eta^i=\alpha j\widetilde v^i\widehat\eta,
 \label{eq:valencia-entropy-pair}
\end{equation}
where $\widehat\eta$ is the standard convex entropy for relativistic
hydrodynamics \cite{BhoriyaKumar2020,DuanTang2019}. In Valencia variables,
the entropy law becomes
\begin{equation}
 \partial_t\eta+\partial_iq_\eta^i\le0,
 \label{eq:valencia-entropy-law}
\end{equation}
with equality for smooth solutions.

We finally transform the Valencia system to the $W$-form introduced by Wu
\cite{Wu2017}. Let $\Theta$ be the unique upper-triangular matrix with
positive diagonal satisfying
\label{sec:w-state}
\begin{equation}
 \Theta^T\Theta=\gamma^{-1},
 \qquad
 \mathcal L=\operatorname{diag}(1,\Theta,1).
 \label{eq:theta}
\end{equation}
Let $v=(v^1,v^2,v^3)^T$ and $S=(S_1,S_2,S_3)^T$. Define
\begin{equation}
 \widehat v=\Theta^{-T}v,
 \quad
 \widehat m=\Theta S=\rho hL^2\widehat v,
 \quad
 \widehat U=\mathcal L U=(D,\widehat m,E)^T,
 \quad
 W=j\widehat U.
 \label{eq:w-state}
\end{equation}
Then $|\widehat v|^2=v^2$, and the local physical flux has the standard SRHD
form
\begin{equation}
 \widehat F^a(\widehat U)=
 \begin{pmatrix}
  D\widehat v_a\\
  \widehat m_b\widehat v_a+p\delta_{ab}\\
  \widehat m_a
 \end{pmatrix},
 \qquad a=1,2,3.
 \label{eq:local-physical-flux}
\end{equation}
Define
\begin{equation}
 A_{ai}=\alpha j\Theta_{ai},
 \qquad
 C^i=j\beta^i,
 \qquad
 H^i=A_{ai}\widehat F^a-C^i\widehat U
     =\alpha j\mathcal L F_{\rm V}^i.
 \label{eq:wflux}
\end{equation}
Since the geometry is stationary, the transformed system is
\begin{equation}
 \partial_t W +\partial_iH^i=\mathcal S_W,
 \label{eq:intrinsic-w-balance}
\end{equation}
where
\begin{equation}
 \mathcal S_W
 =\alpha j\left[
    \mathcal L\mathcal S_{\rm V}
    +(\partial_i\mathcal L)F_{\rm V}^i
   \right].
 \label{eq:w-source-explicit}
\end{equation}
The second term is generated solely by the spatial variation of the local
orthonormal frame when the Valencia balance law is multiplied by
$\mathcal L$ and the spatial product rule is applied.
Finally, the entropy pair in $W$-form is
\begin{equation}
 \widehat q^a=\widehat v_a\widehat\eta,
 \qquad
 q_\eta^i=A_{ai}\widehat q^a-C^i\widehat\eta,
 \qquad
 \partial_t \eta +\partial_iq_\eta^i\le0,
 \label{eq:continuous-intrinsic-entropy}
\end{equation}
again with equality for smooth solutions.

%% file: sections/03_entropy_structure.tex
\subsection{Two-dimensional W-form reductions}
\label{sec:entropy}
\label{sec:symmetry-reduction}

Many numerical computations evolve only two active spatial coordinates after
suppressing a third.  The common algebra below covers both genuine symmetry
reductions and the infinitesimally thin equatorial Kerr restriction used
later.  In either case, the metric scale of the suppressed direction remains
in the physical volume measure and generates additional geometric source
terms.

We split the spatial coordinates as
\begin{equation}
 x^i=(x^I,x^\perp),
 \qquad I=1,2,
 \label{eq:active-suppressed-split}
\end{equation}
where $x^I$ are the active coordinates and $x^\perp$ denotes the suppressed
coordinate. Henceforth, uppercase Latin indices $I,J,\ldots$ range over the
two active directions.  The four-component model considered here assumes
\begin{equation}
 u^\perp=0,
 \qquad
 \beta^\perp=0,
 \qquad
 \gamma_{I\perp}=0,
 \qquad
 \gamma_{\perp\perp}=w^2,
 \label{eq:suppressed-symmetry}
\end{equation}
where $w>0$ is the metric scale factor associated with the suppressed
direction.  For a true symmetry reduction we additionally assume
$\partial_\perp(\cdot)=0$.  This applies to the planar and axisymmetric
models below.  The equatorial Kerr model is different: it is the standard
infinitesimally thin restriction of the full equations to $\theta=\pi/2$,
with zero polar motion and no retained polar dependence, as in
\cite{FontIbanezPapadopoulos1999}.  The same block algebra applies on the
equatorial plane, but the suppressed polar direction is not a continuous
symmetry orbit.

The spatial metric has the block form
\begin{equation}
 \gamma_{ik}^{(3)}=
 \begin{pmatrix}
  \gamma_{IJ}&0\\
  0&w^2
 \end{pmatrix},
 \qquad
 j_3:=\sqrt{\det\gamma^{(3)}}=w j_2,
 \qquad
 j_2:=\sqrt{\det\gamma_{IJ}}.
 \label{eq:suppressed-block}
\end{equation}
Thus $j_2$ is the volume density of the active two-dimensional metric, whereas
$j_3$ also includes the suppressed-direction scale $w$.

Choose the orthonormal factor consistently with the block metric,
\[
 \Theta^{(3)}=
 \begin{pmatrix}
  \Theta&0\\
  0&w^{-1}
 \end{pmatrix},
 \qquad
 \Theta^T\Theta=\gamma_{IJ}^{-1}.
\]
After removing the vanishing $\perp$-momentum component, define
\begin{equation}
 \widehat U=(D,\widehat m_1,\widehat m_2,E)^T,
 \qquad
 W:=j_2\widehat U,
 \label{eq:reduced-w-state}
\end{equation}
and, for $I=1,2$,
\begin{equation}
 H^I:=A_{aI}\widehat F^a-C^I\widehat U,
 \qquad
 A_{aI}:=\alpha j_2\Theta_{aI},
 \qquad
 C^I:=j_2\beta^I,
 \label{eq:reduced-w-flux}
\end{equation}
where $\widehat F^a$ is the four-component orthonormal flux obtained by
restricting the flux in Section~\ref{sec:w-state} to the active variables.
Since $j_3=w j_2$, the restriction of the three-dimensional W-state and its
active fluxes is simply $wW$ and $wH^I$, respectively, while the flux of the
active variables in the suppressed direction vanishes. More precisely,
\begin{equation}
 (F_{\rm V}^{\perp})_D=0,
 \qquad
 (F_{\rm V}^{\perp})_{S_I}=0,
 \qquad
 (F_{\rm V}^{\perp})_E=0,
 \qquad
 (F_{\rm V}^{\perp})_{S_\perp}=p.
 \label{eq:suppressed-direction-flux-components}
\end{equation}
Thus the suppressed-direction flux vanishes only after restriction to the
retained four-component state; the pressure flux belongs to the omitted
suppressed-momentum equation. The W-form restricted to the active state
therefore becomes
\begin{equation}
 \partial_t(wW)+\partial_I(wH^I)
 =w\mathcal S_W+\mathbb S_\perp,
 \label{eq:physical-reduced-balance}
\end{equation}
where $\mathcal S_W$ is the W-source formed from the active geometry
$(\alpha,\beta^I,\gamma_{IJ})$, and $\mathbb S_\perp$ is the weighted
geometric source induced by the suppressed direction. This reduced
conservative system is the model for which the numerical methods in the
following sections are developed.

To identify $\mathbb S_\perp$, decompose the Valencia source
\eqref{eq:valencia-source} as
\[
 \mathcal S_{\rm V}
 =\mathcal S_{\rm V}^{\rm act}+\mathcal S_{{\rm V},\perp},
\]
where $\mathcal S_{{\rm V},\perp}$ denotes the contribution from the
suppressed direction. Since $u^\perp=0$,
\[
 T^{\perp\perp}=p g^{\perp\perp}=\frac{p}{w^2}.
\]
Multiplication by the full factor $w$ gives the active momentum contribution
\begin{equation}
 w(\mathcal S_{{\rm V},\perp})_{S_I}
 =p\,\partial_I w.
 \label{eq:reduced-valencia-momentum-source}
\end{equation}
For the energy equation,
\[
 \Gamma^0_{\perp\perp}
 =-\frac12g^{0I}\partial_I(w^2),
\]
so the corresponding weighted contribution is
\begin{equation}
 w(\mathcal S_{{\rm V},\perp})_E
 =\alpha p g^{0I}\partial_I w.
 \label{eq:reduced-valencia-energy-source}
\end{equation}
After applying the active W transformation, the weighted
suppressed-direction W-source is
\begin{equation}
 (\mathbb S_\perp)_D=0,
 \;
 (\mathbb S_\perp)_{\widehat m_a}
 =\alpha j_2\Theta_{aI}p\,\partial_I w,
 \;
 (\mathbb S_\perp)_E
 =\alpha^2j_2 g^{0I}p\,\partial_I w.
 \label{eq:reduced-suppressed-source}
\end{equation}
Equivalently, define finite coefficient vectors $P_\perp^I$ by
\begin{equation}
 \mathbb S_\perp=P_\perp^I\partial_Iw.
 \label{eq:reduced-weight-source-coefficient}
\end{equation}

The entropy condition reduces with exactly the same physical measure. Indeed,
\[
 \eta^{(3)}=w\eta,
 \qquad
 q_\eta^{I,(3)}=wq_\eta^I,
 \qquad
 q_\eta^{\perp,(3)}=0.
\]
Therefore entropy-admissible weak solutions satisfy
\begin{equation}
 \partial_t(w\eta)+\partial_I(wq_\eta^I)\le0,
 \label{eq:continuous-font-entropy}
\end{equation}
with equality for smooth solutions.

\subsection{Model geometries}
The following cases provide representative examples of the active geometry and the suppressed metric
factor in \eqref{eq:physical-reduced-balance}.  The planar, cylindrical, and
Schwarzschild cases are true symmetry reductions; the Kerr case is the thin
equatorial restriction described above.

\subsubsection{Planar special relativistic hydrodynamics.}
\label{sec:planar-srhd-model}
The simplest member of \eqref{eq:physical-reduced-balance} is the usual
planar SRHD system, obtained from three-dimensional Minkowski space by
suppressing a translationally invariant direction. In Cartesian active
coordinates $(x,y)$,
\begin{equation}
 \alpha=1,
 \qquad
 \beta^I=0,
 \qquad
 \gamma_{IJ}=\delta_{IJ},
 \qquad
 j_2=1,
 \qquad
 w=1.
 \label{eq:planar-srhd-reduction}
\end{equation}
Hence $\Theta=I_2$, $A_{aI}=\delta_{aI}$, and $C^I=0$. Therefore
\begin{equation}
 W=\widehat U,
 \qquad
 H^x=\widehat F^1,
 \qquad
 H^y=\widehat F^2,
 \qquad
 \mathcal S_W=\mathbb S_\perp=0,
 \label{eq:planar-srhd-wform}
\end{equation}
and the reduced system is
\begin{equation}
 \partial_t W+\partial_x\widehat F^1+\partial_y\widehat F^2=0.
 \label{eq:planar-srhd-balance}
\end{equation}
Such multidimensional SRHD models are standard benchmarks for high-order
physical-constraint-preserving relativistic schemes; see, e.g.,
\cite{WuTang2015,QinShuYang2016}.

\subsubsection{Axisymmetric SRHD with zero azimuthal velocity.}
\label{sec:axisymmetric-srhd-model}
A nontrivial reduction is obtained by writing flat three-dimensional space in
cylindrical coordinates $(r,z,\phi)$,
\begin{equation}
 \dd\ell^2=\dd r^2+\dd z^2+r^2\dd\phi^2,
 \label{eq:cylindrical-flat-metric}
\end{equation}
and imposing axisymmetry with zero azimuthal velocity. The active metric on
$(r,z)$ is Euclidean, so
\begin{equation}
 \alpha=1,
 \qquad
 \beta^I=0,
 \qquad
 \gamma_{IJ}=\delta_{IJ},
 \qquad
 j_2=1,
 \qquad
 w=r.
 \label{eq:axisymmetric-radial-weight}
\end{equation}
Thus $\Theta=I_2$, $W=\widehat U$, $H^r=\widehat F^r$, and
$H^z=\widehat F^z$. The active W-source vanishes, while the direct weighted
suppressed source is
\begin{equation}
 \mathbb S_\perp
 =\begin{pmatrix}
   0, p, 0, 0
  \end{pmatrix}^T.
 \label{eq:axisymmetric-srhd-source}
\end{equation}
Consequently,
\begin{equation}
 \partial_t(rW)
 +\partial_r(r\widehat F^r)
 +\partial_z(r\widehat F^z)
 =
 \begin{pmatrix}
  0, p, 0, 0
 \end{pmatrix}^T.
 \label{eq:axisymmetric-srhd-balance}
\end{equation}
The continuous entropy condition follows directly from
\eqref{eq:continuous-font-entropy}. Axisymmetric relativistic hydrodynamics
in cylindrical coordinates is a standard reduced setting; see
\cite{MartiEtAl1997,QinShuYang2016,CaoPengWu2025}.
The coordinate singularity at the symmetry axis $r=0$
 requires special attention in the design of a robust numerical method.

\subsubsection{Axisymmetric Schwarzschild Bondi--Hoyle accretion.}
\label{sec:schwarzschild-bondi-hoyle}

A genuinely two-dimensional GRHD model is axisymmetric
Bondi--Hoyle accretion of an ideal-gas wind onto a fixed Schwarzschild black
hole. Horizon-penetrating Eddington--Finkelstein slices are convenient because
the computational domain may extend through the event horizon
\cite{LoraClavijoGuzman2013}. We use active coordinates $(r,\theta)$ and
suppress the azimuthal direction $\phi$, with $u^\phi=0$. The Schwarzschild
ADM data are
\begin{equation}
 \mathcal K(r):=1+\frac{2M}{r},
 \qquad
 \alpha=\mathcal K^{-1/2},
 \qquad
 \beta^r=\frac{2M}{r+2M},
 \qquad
 \beta^\theta=0,
 \label{eq:schwarzschild-bh-lapse-shift}
\end{equation}
and
\begin{equation}
 \gamma_{IJ}
 =\begin{pmatrix}
   \mathcal K&0\\
   0&r^2
  \end{pmatrix},
 \qquad
 w=r\sin\theta,
 \qquad I,J\in\{r,\theta\}.
 \label{eq:schwarzschild-bh-spatial-metric}
\end{equation}
Thus
\begin{equation}
 j_2=r\sqrt{\mathcal K},
 \qquad
 \alpha j_2=r,
 \qquad
 \Theta=
 \begin{pmatrix}
  \mathcal K^{-1/2}&0\\
  0&r^{-1}
 \end{pmatrix}.
 \label{eq:schwarzschild-bh-frame}
\end{equation}
The W-form geometry coefficients are therefore
\begin{equation}
 (A_{aI})
 =\begin{pmatrix}
  r/\sqrt{\mathcal K}&0\\
  0&1
 \end{pmatrix},
 \qquad
 C^r=2M\alpha,
 \qquad
 C^\theta=0.
 \label{eq:schwarzschild-bh-w-geometry}
\end{equation}
Hence
\begin{equation}
 W=r\sqrt{\mathcal K}\,\widehat U,
 \qquad
 H^r=\frac{r}{\sqrt{\mathcal K}}\widehat F^1-2M\alpha\widehat U,
 \qquad
 H^\theta=\widehat F^2.
 \label{eq:schwarzschild-bh-state-flux}
\end{equation}
All active geometry coefficients depend only on $r$. Hence the intrinsic
source is obtained from the W-source formula of Section~\ref{sec:w-state} as
\begin{equation}
 \mathcal S_W
 =\alpha j_2\left[
   \mathcal L\mathcal S_{\rm V}^{\rm act}
   +(\partial_r\mathcal L)F_{\rm V}^r
  \right],
 \qquad
 \mathcal L=\operatorname{diag}(1,\Theta,1).
 \label{eq:black-hole-active-source}
\end{equation}
Its explicit expression for the Schwarzschild geometry is collected in
\ref{app:sb-rphi-details}.

The weighted source in the suppressed azimuthal direction is
\begin{equation}
 \mathbb S_\perp
 =\begin{pmatrix}
  0, \;
  r\sin\theta\,p/\sqrt{\mathcal K},\;
  r\cos\theta\,p,\;
  2M\alpha p\sin\theta
 \end{pmatrix}^T.
 \label{eq:schwarzschild-bh-suppressed-source}
\end{equation}
The reduced Schwarzschild system is therefore
\begin{equation}
 \partial_t(r\sin\theta\,W)
 +\partial_r(r\sin\theta\,H^r)
 +\partial_\theta(r\sin\theta\,H^\theta)
 =r\sin\theta\,\mathcal S_W+\mathbb S_\perp,
 \label{eq:schwarzschild-bh-balance}
\end{equation}
Its continuous entropy condition follows from
\eqref{eq:continuous-font-entropy} with $w=r\sin\theta$.  For a supersonic
asymptotic wind this model develops the characteristic downstream
shock cone of relativistic Bondi--Hoyle accretion
\cite{LoraClavijoGuzman2013}.
The polar axes $\theta=0,\pi$, where $\sin\theta=0$,
require special care in the numerical treatment.

\subsubsection{Equatorial Kerr--Schild restriction.}
\label{sec:kerr-reduced-models}
Black-hole accretion on prescribed Kerr backgrounds is a standard GRHD model
problem \cite{Font2008}. Following the infinitesimally thin
equatorial setup of \cite{FontIbanezPapadopoulos1999}, we restrict the
three-dimensional system to $\theta=\pi/2$ with zero polar velocity. The
active coordinates are the Kerr--Schild coordinates $(r,\widetilde\phi)$,
and the suppressed polar metric scale is
\begin{equation}
 w=r.
 \label{eq:font-radial-weight}
\end{equation}
Let $M>0$ be the black-hole mass, $|a|\le M$ the spin parameter, and define
\begin{equation}
 \mathcal K(r):=1+\frac{2M}{r},
 \qquad
 \mathcal B(r):=r^2+a^2\mathcal K(r).
 \label{eq:kerr-rphi-scalars}
\end{equation}
The equatorial ADM data are
\begin{equation}
 \alpha=\mathcal K^{-1/2},
 \qquad
 \beta^r=\frac{2M}{r+2M},
 \qquad
 \beta^{\widetilde\phi}=0,
 \label{eq:kerr-rphi-lapse-shift}
\end{equation}
and
\begin{equation}
 \gamma_{IJ}=
 \begin{pmatrix}
  \mathcal K&-a\mathcal K\\
  -a\mathcal K&\mathcal B
 \end{pmatrix},
 \qquad I,J\in\{r,\widetilde\phi\}.
 \label{eq:kerr-rphi-spatial-metric}
\end{equation}
Since
\[
 \det\gamma_{IJ}
 =\mathcal K\mathcal B-a^2\mathcal K^2
 =r^2\mathcal K=r(r+2M),
\]
we have
\begin{equation}
 \gamma^{IJ}=
 \begin{pmatrix}
  \mathcal K^{-1}+a^2/r^2&a/r^2\\
  a/r^2&1/r^2
 \end{pmatrix},
 \qquad
 j_2=\sqrt{r(r+2M)},
 \qquad
 \alpha j_2=r.
 \label{eq:kerr-rphi-inverse-volume}
\end{equation}
Since $w=r$ on the equatorial plane,
\begin{equation}
 wj_2=r^2\sqrt{\mathcal K}
 =\sqrt{\det(\gamma_{ij})}\big|_{\theta=\pi/2}.
 \label{eq:kerr-full-volume-consistency}
\end{equation}
Thus the weight restores exactly the full three-dimensional spatial volume
factor on the equatorial plane.
The upper-triangular orthonormal factor is
\begin{equation}
 \Theta=
 \begin{pmatrix}
  \dfrac{\sqrt{\mathcal B/\mathcal K}}{r}
  &\dfrac{a\sqrt{\mathcal K/\mathcal B}}{r}\\[3mm]
  0&\dfrac1{\sqrt{\mathcal B}}
 \end{pmatrix},
 \qquad
 \Theta^T\Theta=\gamma^{-1}.
 \label{eq:kerr-rphi-theta}
\end{equation}
Consequently, the W-form geometry coefficients are
\begin{equation}
 (A_{aI})
 =r\Theta
 =\begin{pmatrix}
  \sqrt{\mathcal B/\mathcal K}
  &a\sqrt{\mathcal K/\mathcal B}\\[2mm]
  0&r/\sqrt{\mathcal B}
 \end{pmatrix},
 \qquad
 C^r=2M\alpha,
 \qquad
 C^{\widetilde\phi}=0.
 \label{eq:kerr-rphi-w-geometry}
\end{equation}
Thus
\begin{equation}
 W=\sqrt{r(r+2M)}\,\widehat U,
 \label{eq:kerr-rphi-state}
\end{equation}
and the two active fluxes are
\begin{align}
 H^r
 &=\sqrt{\frac{\mathcal B}{\mathcal K}}\,\widehat F^1
   -2M\alpha\widehat U,
 \label{eq:kerr-rphi-radial-flux}\\
 H^{\widetilde\phi}
 &=a\sqrt{\frac{\mathcal K}{\mathcal B}}\,\widehat F^1
   +\frac{r}{\sqrt{\mathcal B}}\,\widehat F^2.
 \label{eq:kerr-rphi-azimuthal-flux}
\end{align}
All geometric coefficients depend only on $r$. Hence the intrinsic active
source has the shared transformed form
\eqref{eq:black-hole-active-source}.  The explicit active source
$S_{\rm V}^{\rm act}$ and radial frame derivatives are collected in
\ref{app:kerr-rphi-details}.

For the suppressed polar direction, $w=r$ and $\partial_rw=1$. Using
\eqref{eq:reduced-suppressed-source}, together with $g^{0r}=2M/r$, gives
\begin{equation}
 \mathbb S_\perp
 =\begin{pmatrix}
  0,
  p\sqrt{\dfrac{\mathcal B}{\mathcal K}},
  0,
  2M\alpha p
 \end{pmatrix}^T.
 \label{eq:kerr-rphi-suppressed-source}
\end{equation}
The complete equatorial Kerr model is therefore
\begin{equation}
 \partial_t(rW)
 +\partial_r(rH^r)
 +\partial_{\widetilde\phi}(rH^{\widetilde\phi})
 =r\mathcal S_W+\mathbb S_\perp,
 \label{eq:rphi-reduced-balance}
\end{equation}
Its entropy condition follows from
\eqref{eq:continuous-font-entropy} with $w=r$.

\begin{remark}[Scope and vanishing metric scales]
The four-component model requires $u^\perp=0$; flows with nonzero suppressed
velocity require an enlarged state and lie outside the present scope.  The
analysis above is carried out at points where $w>0$.  In genuine symmetry
reductions, however, the orbit scale may vanish on a symmetry axis, as in the
cylindrical and Schwarzschild examples considered above.  Such zeros are
handled separately in the numerical method.  For a simple zero of $w$, the
weighted state has the regular boundary value $\mathbb W_\star=0$, while the
finite intrinsic state is recovered from the neighboring polynomial through a
discrete l'Hopital construction; see \ref{app:simple-zero-treatment}.
\end{remark}

%% file: sections/04_dgsem.tex
\section{Entropy-stable DGSEM on affine tensor-product meshes}
\label{sec:dgsem}

We discretize the symmetry-reduced GRHD system
\eqref{eq:physical-reduced-balance} on conforming affine tensor-product
elements in the two active coordinates.  The prescribed geometry is
stationary, and throughout the entropy analysis we assume $w>0$ at all
solution nodes.
Axis cells containing a boundary where $w=0$
require a separate regular treatment in the implementation; see \ref{app:simple-zero-treatment}.

\subsection{Numerical conservative form and entropy structure}
\label{sec:dg-weighted-entropy-structure}

Define
\begin{equation}
 \mathbb W:=wW,
 \qquad
 \mathbb H^I:=wH^I,
 \qquad
 \mathbb S:=w\mathcal S_W+\mathbb S_\perp,
 \label{eq:dg-weighted-variables}
\end{equation}
so that
\begin{equation}
 \partial_t\mathbb W+\partial_I\mathbb H^I=\mathbb S.
 \label{eq:dg-numerical-balance}
\end{equation}
With
\begin{equation}
 \mathcal A_{aI}:=wA_{aI},
 \qquad
 \mathcal C^I:=wC^I,
 \label{eq:dg-complete-coefficients}
\end{equation}
the weighted flux is
\begin{equation}
 \mathbb H^I
 =\mathcal A_{aI}\fhat^a-\mathcal C^I\uhat,
 \label{eq:dg-weighted-flux}
\end{equation}
and, at a node with $w>0$,
\begin{equation}
 \uhat=\frac{\mathbb W}{wj_2}.
 \label{eq:dg-positive-weight-local-state}
\end{equation}

The corresponding entropy pair is
\begin{equation}
 \eta_w:=w\eta=wj_2\widehat\eta,
 \qquad
 q_{\eta,w}^I:=wq_\eta^I.
 \label{eq:dg-weighted-entropy-pair}
\end{equation}
Differentiation at fixed geometry gives the local SRHD entropy variables
\begin{equation}
 V:=\nabla_{\mathbb W}\eta_w
 =\begin{pmatrix}
 \dfrac{\Gamma-s}{\Gamma-1}+\zeta\\
 \zeta L\vhat_a\\
 -\zeta L
 \end{pmatrix},
 \qquad
 \zeta:=\frac{\rho}{p}.
 \label{eq:entropy-variables}
\end{equation}
Here $\nabla_{\mathbb W}$ denotes the ordinary gradient with respect to the
state variables $\mathbb W$; it is distinct from the spacetime covariant
derivative $\nabla_\mu$ in~\eqref{eq:covariant-grhd}.

Define the local entropy potentials
\begin{equation}
 \Phi:=V^T\uhat-\widehat\eta=D,
 \qquad
 \psi_a:=V^T\fhat^a-\widehat q^a=D\vhat_a,
 \qquad
 \widehat q^a=\vhat_a\widehat\eta.
 \label{eq:dg-local-potentials}
\end{equation}
Then
\begin{equation}
 q_{\eta,w}^I
 =\mathcal A_{aI}\widehat q^a-\mathcal C^I\widehat\eta,
 \qquad
 \Psi_w^I:=V^T\mathbb H^I-q_{\eta,w}^I
 =\mathcal A_{aI}\psi_a-\mathcal C^I\Phi.
 \label{eq:dg-weighted-entropy-flux}
\end{equation}

\begin{lemma}[Source contraction]
\label{lem:dg-continuous-source-contraction}
For every smooth admissible state at a point with $w>0$,
\begin{equation}
 V^T\mathbb S
 =\psi_a\,\partial_I\mathcal A_{aI}
  -\Phi\,\partial_I\mathcal C^I.
 \label{eq:dg-continuous-source-contraction}
\end{equation}
\end{lemma}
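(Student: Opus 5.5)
We derive the identity indirectly, from the continuous entropy law \eqref{eq:continuous-font-entropy} holding with equality for smooth solutions, rather than by expanding the Christoffel symbols in \eqref{eq:valencia-source}. At a point with $w>0$, take a smooth solution of \eqref{eq:dg-numerical-balance}. Contract the balance law with $V=\nabla_{\mathbb W}\eta_w$. Because the geometry is stationary, $V^T\partial_t\mathbb W=\partial_t\eta_w$. Hence
\[
 \partial_t\eta_w+V^T\partial_I\mathbb H^I=V^T\mathbb S .
\]
Subtracting the smooth entropy equality $\partial_t\eta_w+\partial_I q^I_{\eta,w}=0$ gives
\[
 V^T\mathbb S=V^T\partial_I\mathbb H^I-\partial_I q^I_{\eta,w}.
\]

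The next step is to evaluate the right-hand side using \eqref{eq:dg-weighted-flux} and \eqref{eq:dg-weighted-entropy-flux}. Write $\partial_I\mathbb H^I=(\partial_I\mathcal A_{aI})\fhat^a-(\partial_I\mathcal C^I)\uhat+\mathcal A_{aI}\partial_I\fhat^a-\mathcal C^I\partial_I\uhat$. The analogous expansion holds for $\partial_I q^I_{\eta,w}$, with $\widehat q^a,\widehat\eta$ in place of $\fhat^a,\uhat$. Two local SRHD identities are needed. The first is $\nabla_{\uhat}\widehat\eta=V$: $\eta_w=wj_2\widehat\eta(\uhat)$ and $\mathbb W=wj_2\uhat$ at fixed geometry, so the factor $wj_2$ cancels, which justifies the form \eqref{eq:entropy-variables}. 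The second is the flux compatibility $V^T\partial_{\uhat}\fhat^a=\partial_{\uhat}\widehat q^a$, i.e.\ $(\widehat\eta,\widehat q^a)$ is an entropy pair for the SRHD fluxes \eqref{eq:local-physical-flux}, which is standard \cite{BhoriyaKumar2020,DuanTang2019}. By the chain rule in $x$, these give $V^T\partial_I\uhat=\partial_I\widehat\eta$ and $V^T\partial_I\fhat^a=\partial_I\widehat q^a$. Consequently all terms involving derivatives of the state cancel, leaving
\[
 V^T\mathbb S=(\partial_I\mathcal A_{aI})\bigl(V^T\fhat^a-\widehat q^a\bigr)-(\partial_I\mathcal C^I)\bigl(V^T\uhat-\widehat\eta\bigr)
 =\psi_a\,\partial_I\mathcal A_{aI}-\Phi\,\partial_I\mathcal C^I ,
\]
by the definitions \eqref{eq:dg-local-potentials}. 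The final values $\Phi=D$ and $\psi_a=D\vhat_a$ come from a short computation with $V$ from \eqref{eq:entropy-variables} and the Gamma-law relations.

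The statement concerns an arbitrary admissible state at a point, not a solution, so one more step is needed. Both sides of \eqref{eq:dg-continuous-source-contraction} are algebraic functions of the state $\uhat$ and the geometry and its first derivatives at that point; they contain no derivatives of $\uhat$. Given any admissible $\uhat_0$ at $x_0$, choose smooth initial data taking the value $\uhat_0$ at $x_0$. Local well-posedness of the smooth Cauchy problem then gives a local smooth solution, and evaluating at $t=0$ yields the identity. Equivalently, the computation can be phrased as an identity between algebraic expressions in which $\partial_I\uhat$ is an arbitrary vector, since the $\partial_I\uhat$ terms were shown to cancel.

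The main obstacle is that this argument relies on the smooth entropy equality \eqref{eq:continuous-font-entropy} being exact, including the sources $w\mathcal S_W$ and $\mathbb S_\perp$. That equality is inherited from $\nabla_\mu(\rho s u^\mu)=0$ and from the Valencia entropy law \eqref{eq:valencia-entropy-law} via the orthonormal transformation and the reduction, as asserted earlier in the paper, and we take it as given. If a self-contained check is preferred, the fallback is a direct verification. Using $\Phi=D$ and $\psi_a=D\vhat_a$, the only nonzero contributions to $V^T\mathbb S$ come from the momentum and energy components, and the mass-related terms must collapse to $D$ times derivatives of $\alpha j_2 w\Theta$ and $j_2 w\beta$. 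Checking this against \eqref{eq:w-source-explicit} and \eqref{eq:reduced-suppressed-source} is a lengthy computation with the Christoffel symbols, and this is where errors would be most likely.
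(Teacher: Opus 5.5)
Your proposal is correct and follows the same route as the paper. You contract the reduced balance law with $V$ and use $d\widehat\eta=V^T d\uhat$ and $d\widehat q^a=V^T d\fhat^a$, so the state-derivative terms cancel against the smooth entropy balance; the remaining geometry-derivative terms, via \eqref{eq:dg-weighted-flux} and \eqref{eq:dg-weighted-entropy-flux}, give $\psi_a\,\partial_I\mathcal A_{aI}-\Phi\,\partial_I\mathcal C^I$. Your extra step from smooth solutions to an arbitrary admissible state at a point is sound and is left implicit in the paper; it works by local solvability, since neither side contains derivatives of $\uhat$.
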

\begin{proof}
  Contract the smooth reduced balance law with the entropy variables $V$.
  Using
  \[
  d\widehat\eta = V^T d\uhat,
  \qquad
  d\widehat q^a = V^T d\fhat^a,
  \]
  the terms involving derivatives of the fluid state combine into the smooth
  entropy balance.  The remaining contributions come from spatial variation of
  the prescribed geometry coefficients.  Substituting
  \eqref{eq:dg-weighted-flux} and
  \eqref{eq:dg-weighted-entropy-flux}, these terms reduce to
  \[
  \psi_a\,\partial_I\mathcal A_{aI}
  -\Phi\,\partial_I\mathcal C^I.
  \]
  Hence
  \[
  V^T\mathbb S
  =
  \psi_a\,\partial_I\mathcal A_{aI}
  -\Phi\,\partial_I\mathcal C^I,
  \]
  which proves \eqref{eq:dg-continuous-source-contraction}.
  \end{proof}

For the numerical source, keep the intrinsic geometry below separate from
$w$; its one-to-one map to the active ADM data and the corresponding
directional differentials are detailed in
\ref{app:geometry-state-map}:
\begin{equation}
 Y:=\left(j_2,A_{11},A_{12},A_{22},C^1,C^2\right)^T
 \label{eq:dg-y-geometry-state}
\end{equation}
and write
\begin{equation}
 \mathcal S_W
 =B_{j_2}^I\partial_I j_2
 +B_{A_{aJ}}^I\partial_IA_{aJ}
 +B_{C^J}^I\partial_IC^J.
 \label{eq:dg-intrinsic-source-decomposition}
\end{equation}
Using $\mathbb S_\perp=P_\perp^I\partial_Iw$ from
\eqref{eq:reduced-weight-source-coefficient}, an algebraic product-rule
rearrangement gives
\begin{equation}
 \mathbb S
 =wB_{j_2}^I\partial_I j_2
 +B_{A_{aJ}}^I\partial_I(wA_{aJ})
 +B_{C^J}^I\partial_I(wC^J)
 +R_w^I\partial_Iw,
 \label{eq:dg-source-decomposition}
\end{equation}
where
\begin{equation}
 R_w^I
 :=P_\perp^I-B_{A_{aJ}}^IA_{aJ}-B_{C^J}^IC^J.
 \label{eq:dg-weight-residual}
\end{equation}
The corresponding entropy contractions are
\begin{equation}
 \begin{aligned}
 V^TB_{j_2}^I&=0,
 &\qquad
 V^TB_{A_{aJ}}^I&=\delta_{IJ}\psi_a,
 &\qquad
 V^TB_{C^J}^I&=-\delta_{IJ}\Phi,\\
 V^TP_\perp^I&=A_{aI}\psi_a-C^I\Phi,
 &\qquad
 V^TR_w^I&=0.
 \end{aligned}
 \label{eq:dg-source-coefficient-contractions}
\end{equation}
Hence only the derivatives of the complete coefficients $wA$ and $wC$
contribute to $V^T\mathbb S$; the $j_2$ and residual $w$ terms are
entropy-orthogonal.  This is the source form used in the DGSEM below.

\subsection{Single-element SBP-DGSEM}
\label{sec:dg-affine-elements}
\label{sec:dg-volume-flux}
\label{sec:dg-semidiscrete}

Let
\begin{equation}
 K=[x^1_L,x^1_R]\times[x^2_L,x^2_R],
 \qquad
 \Delta x_I=x^I_R-x^I_L,
 \label{eq:dg-affine-element}
\end{equation}
be one rectangular element, mapped from $\widehat K=[-1,1]^2$ by
\begin{equation}
 x^1=x^1_c+\frac{\Delta x_1}{2}\xi,
 \qquad
 x^2=x^2_c+\frac{\Delta x_2}{2}\eta.
 \label{eq:dg-affine-map}
\end{equation}
Writing $\xi^1:=\xi$, $\xi^2:=\eta$, set
\begin{equation}
 J_K=\frac{\Delta x_1\Delta x_2}{4},
 \qquad
 \kappa_I:=\frac{2}{\Delta x_I}.
 \label{eq:dg-affine-jacobian}
\end{equation}
Since $J_K$ is constant, \eqref{eq:dg-numerical-balance} becomes
\begin{equation}
 \partial_t\mathbb W
 +\sum_{I=1}^2 \kappa_I\partial_{\xi^I}\mathbb H^I
 =\mathbb S.
 \label{eq:dg-reference-balance}
\end{equation}

Let $\{\xi_i,\omega_i\}_{i=0}^N$ be the $(N+1)$ GLL nodes and positive
quadrature weights, and let $\ell_i$ be the associated Lagrange basis with
$\ell_i(\xi_j)=\delta_{ij}$.  The numerical solution and the prescribed
geometry are represented by
\begin{equation}
 \mathbb W_h(\xi,\eta,t)
 =\sum_{i,j=0}^N\mathbb W_{ij}(t)\ell_i(\xi)\ell_j(\eta),
 \qquad
 Y_h(\xi,\eta)
 =\sum_{i,j=0}^NY_{ij}\ell_i(\xi)\ell_j(\eta),
 \label{eq:dg-nodal-interpolation}
\end{equation}
and use the same nodal interpolation for $w$, $\mathcal A=wA$, and
$\mathcal C=wC$.
The one-dimensional mass, differentiation, and SBP matrices are
\begin{equation}
 \mathsf M=\operatorname{diag}(\omega_0,\ldots,\omega_N),
 \qquad
 \mathsf D_{ij}=\ell_j'(\xi_i),
 \qquad
 \mathsf Q:=\mathsf M\mathsf D,
 \label{eq:dg-mdq}
\end{equation}
and satisfy \cite{GassnerWintersKopriva2016Split}
\begin{equation}
 \mathsf Q+\mathsf Q^T=\mathsf B,
 \qquad
 \mathsf B=\operatorname{diag}(-1,0,\ldots,0,1),
 \qquad
 \mathsf D\boldsymbol 1=0.
 \label{eq:dg-sbp}
\end{equation}
For a nodal field $Z_{ij}$, set
\begin{equation}
 (\mathsf D_1Z)_{ij}
 :=\sum_{m=0}^N\mathsf D_{im}Z_{mj},
 \qquad
 (\mathsf D_2Z)_{ij}
 :=\sum_{m=0}^N\mathsf D_{jm}Z_{im}.
 \label{eq:dg-reference-derivatives}
\end{equation}

\paragraph{Entropy-conservative volume flux and compatible source}
Let $p=(i,j)$ denote a generic tensor-product node.  At each node, recover the
local state $\uhat_p$ from \eqref{eq:dg-positive-weight-local-state} and
evaluate the corresponding entropy variables $V_p$ and entropy potentials
$\Phi_p$ and $\psi_{a,p}$ using \eqref{eq:entropy-variables} and
\eqref{eq:dg-local-potentials}.  For any two-point quantity $z$, write
$\jump{z}:=z_R-z_L$ and $\mean{z}:=(z_L+z_R)/2$.
Choose symmetric, consistent SRHD two-point functions
$\uhat_{LR}^{\#}$ and $\fhat_{LR}^{a,\EC}$ satisfying
\begin{equation}
 \jump{V}^T\uhat_{LR}^{\#}=\jump{\Phi},
 \qquad
 \jump{V}^T\fhat_{LR}^{a,\EC}=\jump{\psi_a}.
 \label{eq:local-tadmor}
\end{equation}
For the spatial contribution, we use the SRHD entropy-conservative flux of
Duan and Tang \cite{DuanTang2019}, rewritten in the local orthonormal
variables.  The temporal entropy-conservative state $\uhat_{LR}^{\#}$ is
needed for the shift contribution.  Closed formulas for both two-point
objects are given in \ref{app:local-ec}.

For two nodes $p,q$ on the same nodal line in direction $I$, define
\begin{equation}
 \mathcal H_{pq}^{I,\EC}
 :=\mean{\mathcal A_{aI}}_{pq}\fhat_{pq}^{a,\EC}
   -\mean{\mathcal C^I}_{pq}\uhat_{pq}^{\#}.
 \label{eq:dg-ec-volume-flux}
\end{equation}
Then
\begin{equation}
 \jump{V}_{pq}^T\mathcal H_{pq}^{I,\EC}
 =\mean{\mathcal A_{aI}}_{pq}\jump{\psi_a}_{pq}
  -\mean{\mathcal C^I}_{pq}\jump{\Phi}_{pq}.
 \label{eq:dg-coordinate-tadmor}
\end{equation}
Here the arithmetic mean is applied directly to the complete coefficients
$\mathcal A=wA$ and $\mathcal C=wC$, rather than to their factors separately.

The compatible nodal source is
\begin{align}
 \mathbb S_{ij}^h
 :={}&\sum_{I=1}^2\kappa_I\bigl[
 w_{ij}B_{j_2,ij}^I(\mathsf D_Ij_2)_{ij}
 +B_{A_{aJ},ij}^I\bigl(\mathsf D_I(wA_{aJ})\bigr)_{ij}\notag\\
 &\hspace{34mm}
 +B_{C^J,ij}^I\bigl(\mathsf D_I(wC^J)\bigr)_{ij}
 +R_{w,ij}^I(\mathsf D_Iw)_{ij}\bigr],
 \label{eq:dg-source-discretization}
\end{align}
which satisfies the discrete entropy contraction
according to \eqref{eq:dg-source-coefficient-contractions},
\begin{equation}
 V_{ij}^T\mathbb S_{ij}^h
 =\sum_{I=1}^2\kappa_I\left[
 \psi_{a,ij}\bigl(\mathsf D_I(wA_{aI})\bigr)_{ij}
 -\Phi_{ij}\bigl(\mathsf D_I(wC^I)\bigr)_{ij}
 \right].
 \label{eq:dg-discrete-source-contraction}
\end{equation}

\paragraph{LLF surface flux and semidiscrete scheme}
For each face $f\subset\partial K$, let $I(f)\in\{1,2\}$ and
$\varepsilon_{K,f}\in\{-1,1\}$ specify its outward coordinate normal:
\begin{equation}
 n_{K,f}=\varepsilon_{K,f}e_{I(f)},
 \qquad
 J_f:=J_K\kappa_{I(f)}.
 \label{eq:dg-face-orientation}
\end{equation}
At a face GLL node, superscripts $-$ and $+$ denote the interior and exterior
traces relative to $K$.  Define the normal physical flux and entropy potential
by
\begin{equation}
 \mathbb H_{n,K,f}^{\pm}:=n_{K,f,I}\mathbb H^{I,\pm},
 \qquad
 \Psi_{w,n,K,f}^{\pm}:=n_{K,f,I}\Psi_w^{I,\pm}.
 \label{eq:dg-interface-normal-traces}
\end{equation}
The surface flux is the classical local Lax--Friedrichs (LLF) flux
\begin{equation}
 \widehat{\mathbb H}_{n,K,f}^{\LLF}
 :=\frac12\left(\mathbb H_{n,K,f}^-+\mathbb H_{n,K,f}^+\right)
   -\frac{a_f}{2}\left(\mathbb W^+-\mathbb W^-\right),
 \label{eq:dg-classical-llf}
\end{equation}
with the geometry-only causal speed
\begin{equation}
 a_f=a_{n_{K,f}}^{\rm PCP}
 :=|\beta^I n_{K,f,I}|
   +\alpha\sqrt{\gamma^{IJ}n_{K,f,I}n_{K,f,J}}.
 \label{eq:dg-causal-normal-speed}
\end{equation}
The coordinate flux used in the strong-form boundary correction is
\begin{equation}
 \widehat{\mathbb H}_{ij}^{I,\LLF}
 :=\varepsilon_{K,f}\widehat{\mathbb H}_{n,K,f}^{\LLF},
 \qquad (i,j)\in f,\quad I=I(f).
 \label{eq:dg-coordinate-llf}
\end{equation}
Combining this face flux with the entropy-conservative volume flux and the
compatible source gives the single-element semidiscretization
\begin{align}
 \dot{\mathbb W}_{ij}
 &+2\kappa_1\sum_{m=0}^N\mathsf D_{im}
   \mathcal H_{(i,j),(m,j)}^{1,\EC}
  +2\kappa_2\sum_{m=0}^N\mathsf D_{jm}
   \mathcal H_{(i,j),(i,m)}^{2,\EC}\notag\\
 &+\frac{\kappa_1\mathsf B_{ii}}{\omega_i}
   \left(\widehat{\mathbb H}_{ij}^{1,\LLF}
        -\mathbb H_{ij}^{\,1}\right)
  +\frac{\kappa_2\mathsf B_{jj}}{\omega_j}
   \left(\widehat{\mathbb H}_{ij}^{2,\LLF}
        -\mathbb H_{ij}^{\,2}\right)
 =\mathbb S_{ij}^h.
 \label{eq:dgsem-affine}
\end{align}

\paragraph{Element conservation and entropy balance}
For a nodal trace $Z$ on $f$, set
\begin{equation}
 \langle Z\rangle_{f,N}
 :=J_f\sum_{q=0}^N\omega_q Z_{f,q}.
 \label{eq:dg-face-quadratures}
\end{equation}
We further define the numerical entropy flux
\begin{equation}
 \widehat{\mathcal Q}_{n,K,f}
 :=(V^-)^T\widehat{\mathbb H}_{n,K,f}^{\LLF}
   -\Psi_{w,n,K,f}^-.
 \label{eq:dg-numerical-entropy-flux}
\end{equation}

\begin{proposition}[Single-element conservation and entropy balance]
\label{lem:dg-single-element-balance}
Define
\begin{equation}
 \mathcal U_K
 :=J_K\sum_{i,j=0}^N\omega_i\omega_j\mathbb W_{ij},
 \qquad
 \mathcal E_K
 :=J_K\sum_{i,j=0}^N\omega_i\omega_j(\eta_w)_{ij}.
 \label{eq:dg-element-functionals}
\end{equation}
The scheme \eqref{eq:dgsem-affine} satisfies the conservative balance
\begin{equation}
 \frac{d\mathcal U_K}{dt}
 +\sum_{f\subset\partial K}
   \left\langle\widehat{\mathbb H}_{n,K,f}^{\LLF}\right\rangle_{f,N}
 =J_K\sum_{i,j=0}^N\omega_i\omega_j\mathbb S_{ij}^h
 \label{eq:dg-element-balance}
\end{equation}
and the entropy balance
\begin{equation}
 \frac{d\mathcal E_K}{dt}
 +\sum_{f\subset\partial K}
   \left\langle\widehat{\mathcal Q}_{n,K,f}\right\rangle_{f,N}=0.
 \label{eq:dg-element-entropy}
\end{equation}
\end{proposition}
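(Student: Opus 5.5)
The plan is the standard flux-differencing argument. First multiply \eqref{eq:dgsem-affine} at node $(i,j)$ by $J_K\omega_i\omega_j$ and sum, which gives \eqref{eq:dg-element-balance}. Then multiply by $J_K\omega_i\omega_j V_{ij}^T$ and sum, which gives \eqref{eq:dg-element-entropy}. Throughout, the geometry is stationary and nodal, so $\frac{d}{dt}(\eta_w)_{ij}=V_{ij}^T\dot{\mathbb W}_{ij}$ with $V=\nabla_{\mathbb W}\eta_w$ evaluated at fixed nodal $w,j_2$. Both computations reduce to one-dimensional statements along nodal lines, applied to each direction $I$ with the transverse index frozen. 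The one-dimensional SBP facts I will use are: $\mathsf M\mathsf D=\mathsf Q$, $\mathsf Q+\mathsf Q^T=\mathsf B$, the row sums $\mathsf Q\boldsymbol 1=0$, and therefore the column sums $\mathsf Q^T\boldsymbol 1=\mathsf B\boldsymbol 1$.

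\emph{Conservation.} The volume term in direction $1$ becomes $J_K\kappa_1\sum_j\omega_j\sum_{i,m}2\mathsf Q_{im}\mathcal H^{1,\EC}_{(i,j),(m,j)}$. Since $\mathcal H^{\EC}$ is symmetric in its two nodes, only the symmetric part $\tfrac12(\mathsf Q+\mathsf Q^T)=\tfrac12\mathsf B$ survives. By consistency of $\fhat^{a,\EC}$ and $\uhat^{\#}$, together with the fact that $\mean{\cdot}_{pp}$ is the nodal value, we have $\mathcal H^{1,\EC}_{pp}=\mathbb H^1_p$. Hence the volume term equals $J_K\kappa_1\sum_j\omega_j\sum_i\mathsf B_{ii}\mathbb H^1_{ij}$. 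This cancels the $-\mathbb H^1$ part of the surface correction, leaving $\sum_j\omega_j J_K\kappa_1\mathsf B_{ii}\widehat{\mathbb H}^{1,\LLF}_{ij}$. By \eqref{eq:dg-coordinate-llf}, $J_f=J_K\kappa_I$, and $\mathsf B_{ii}=\varepsilon_{K,f}$ on the two faces, this is exactly $\sum_f\langle\widehat{\mathbb H}^{\LLF}_{n,K,f}\rangle_{f,N}$.

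\emph{Entropy.} Along a line in direction $I$, write $A:=\mathcal A_{aI}$, $C:=\mathcal C^I$, and for brevity drop the transverse index. Split $2\mathsf Q=(\mathsf Q-\mathsf Q^T)+\mathsf B$. The $\mathsf B$ part gives $\sum_i\mathsf B_{ii}V_i^T\mathbb H^I_i$. The skew part, after symmetrizing in $(i,m)$, gives $-\sum_{i,m}\mathsf Q_{im}\jump{V}_{im}^T\mathcal H^{I,\EC}_{im}$. By \eqref{eq:dg-coordinate-tadmor} this becomes
\[
-\sum_{i,m}\mathsf Q_{im}\Bigl[\tfrac12(A_i+A_m)(\psi_{a,m}-\psi_{a,i})-\tfrac12(C_i+C_m)(\Phi_m-\Phi_i)\Bigr].
\]
Expanding and using the row-sum and column-sum identities, I expect
\[
\sum_{i,m}\mathsf Q_{im}\tfrac12(A_i+A_m)(\psi_m-\psi_i)=\sum_i\mathsf B_{ii}A_i\psi_i-\sum_i\omega_i\psi_i(\mathsf D A)_i ,
\]
and similarly for the $C,\Phi$ pair. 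Collecting terms, the volume contribution becomes
\[
\sum_i\mathsf B_{ii}\bigl(V_i^T\mathbb H^I_i-\Psi^I_{w,i}\bigr)+\sum_i\omega_i\bigl[\psi_{a,i}(\mathsf D_I\mathcal A_{aI})_i-\Phi_i(\mathsf D_I\mathcal C^I)_i\bigr],
\]
using $\Psi^I_w=\mathcal A_{aI}\psi_a-\mathcal C^I\Phi$ from \eqref{eq:dg-weighted-entropy-flux}.

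It remains to assemble. After weighting by $\kappa_I$ and $J_K\omega$, the second sum cancels exactly against the source contraction \eqref{eq:dg-discrete-source-contraction}; this is where the matched source is used. The $V^T\mathbb H^I$ boundary terms cancel against the $-V^T\mathbb H^I$ part of the surface correction. What remains on each face is $\varepsilon_{K,f}\bigl(V^{-T}\widehat{\mathbb H}^{I,\LLF}-\Psi^{I}_w\bigr)$, which by \eqref{eq:dg-numerical-entropy-flux} is $\widehat{\mathcal Q}_{n,K,f}$; quadrature with $J_f$ then gives \eqref{eq:dg-element-entropy}.

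The main obstacle is the index bookkeeping in the skew-part identity. I will verify it directly: the terms $\sum\mathsf Q_{im}A_i\psi_i$ vanish by $\mathsf Q\boldsymbol 1=0$, the terms $\sum\mathsf Q_{im}A_m\psi_m$ equal $\sum\mathsf B_{mm}A_m\psi_m$, and the two cross terms combine through $\mathsf Q^T=\mathsf B-\mathsf Q$ into $\sum\mathsf B A\psi-2\sum\psi\,\mathsf Q A$. All signs must then be tracked consistently with the face orientation $\varepsilon_{K,f}$.
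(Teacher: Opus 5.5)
Your proposal is correct and follows the paper's argument. Conservation comes from summing with the GLL weights, using the symmetry of $\mathcal H^{I,\EC}$ and $\mathsf Q+\mathsf Q^T=\mathsf B$. The entropy balance comes from contracting with $V_{ij}$, applying \eqref{eq:dg-coordinate-tadmor} and the nodal-line SBP identity (yours is the paper's up to the orientation of the jump), and then cancelling against \eqref{eq:dg-discrete-source-contraction}; your face bookkeeping $\mathsf B_{ii}=\varepsilon_{K,f}$, $J_f=J_K\kappa_I$ correctly yields $\langle\widehat{\mathcal Q}_{n,K,f}\rangle_{f,N}$.
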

\begin{proof}
Summing \eqref{eq:dgsem-affine} with the tensor-product GLL weights and using
$\mathsf Q+\mathsf Q^T=\mathsf B$ gives \eqref{eq:dg-element-balance}.  Since $w_{ij}>0$,
$d(\eta_w)_{ij}/dt=V_{ij}^T\dot{\mathbb W}_{ij}$.  Contract the nodal
equations with $V_{ij}$ and apply \eqref{eq:dg-coordinate-tadmor}.  Along each
nodal line,
\[
 \sum_{p,q=0}^N\mathsf Q_{pq}\mean a_{pq}(b_p-b_q)
 =\sum_{p=0}^N\omega_pb_p(\mathsf D a)_p
  -\sum_{p=0}^N\mathsf B_{pp}a_pb_p.
\]
Equation~\eqref{eq:dg-discrete-source-contraction} cancels the remaining
interior source contraction, yielding \eqref{eq:dg-element-entropy}.
\end{proof}


\subsection{Global entropy stability}
\label{sec:dg-interface-entropy}

For the face orientation and traces in
\eqref{eq:dg-interface-normal-traces}, entropy stability requires
\begin{equation}
 \jump{V}^T\widehat{\mathbb H}_{n,K,f}^{\LLF}
 -\jump{\Psi_{w,n,K,f}}\le0.
 \label{eq:dg-llf-entropy-inequality}
\end{equation}
\begin{proposition}[Causal LLF stabilization]
\label{thm:pcp-dominates-entropy-speed}
For two admissible states sharing the same prescribed face geometry, the
LLF flux \eqref{eq:dg-classical-llf} with the causal speed
\eqref{eq:dg-causal-normal-speed} satisfies
\eqref{eq:dg-llf-entropy-inequality}.
\end{proposition}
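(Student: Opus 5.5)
Since both states share the same face geometry, the coefficients $\mathcal A_{aI}$, $\mathcal C^I$, $w$, $j_2$ are common to the two traces. So the left side of \eqref{eq:dg-llf-entropy-inequality} splits into a central part and a dissipative part. I will write the central part using the traces $\mathbb H^{\pm}$ and $\Psi_w^{\pm}$, and reduce it to a statement about the local orthonormal SRHD flux and state.

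\emph{Step 1: the central part.} With $\mathbb H_n=n_I\mathbb H^I$ and $\mathbb W=wj_2\uhat$, the central part is
\[
 \tfrac12\jump{V}^T(\mathbb H_n^-+\mathbb H_n^+)-\jump{\Psi_{w,n}} .
\]
I will rewrite the jump of $\Psi_{w,n}$ as $\jump{V^T\mathbb H_n - q_{\eta,w,n}}$ and expand the products. Because the geometry is common, the central part becomes the standard quantity
\[
 \tfrac12\jump{V}^T(G^-+G^+)-\jump{V^TG-Q},
\]
where $G=n_I\mathcal A_{aI}\fhat^a-n_I\mathcal C^I\uhat$ and $Q$ is the matching entropy flux. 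By the chain rule along the segment $V(\theta)=V^-+\theta\jump{V}$, the standard identity for a flux with entropy potential gives
\[
 \jump{V^TG-Q}=\int_0^1\jump{V}^TG(V(\theta))\,d\theta
\]
(the Tadmor line integral). Subtracting the trapezoidal rule, the central part equals the trapezoid-minus-integral error of the scalar function $g(\theta)=\jump{V}^TG(V(\theta))$.

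\emph{Step 2: the standard Tadmor/LLF comparison.} Using the two-point integral representation, the full expression equals
\[
 \int_0^1\bigl(\theta-\tfrac12\bigr)\jump{V}^T\,\partial_V G\,\jump{V}\,d\theta-\frac{a_f}{2}\jump{V}^T\jump{\mathbb W}
 =\tfrac12\int_0^1 \jump{V}^T\bigl[(2\theta-1)\partial_VG-a_f\,\partial_V\mathbb W\bigr]\jump{V}\,d\theta
\]
after writing $\jump{\mathbb W}=\int_0^1\partial_V\mathbb W\,\jump{V}\,d\theta$. Here $\partial_V\mathbb W=wj_2\,\partial_V\uhat$ is symmetric positive definite, by convexity of $\widehat\eta$, and $\partial_VG$ is symmetric. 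It therefore suffices to show, at every admissible state, that
\[
 |\jump{V}^T\partial_VG\jump{V}|\le a_f\,\jump{V}^T\partial_V\mathbb W\jump{V} .
\]
By the symmetrizer property, this reduces to bounding the spectral radius of $\partial_{\mathbb W}G$, i.e.\ of the normal Jacobian of $G/(wj_2)$ with respect to $\uhat$. Factoring out $w$, and using $\mathcal A_{aI}=w\alpha j_2\Theta_{aI}$ and $\mathcal C^I=wj_2\beta^I$, this Jacobian is
\[
 \alpha\,\hat n_a\,\partial_{\uhat}\fhat^a-\beta^In_I\,\mathrm{Id},
 \qquad \hat n_a=\Theta_{aI}n_I .
\]

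\emph{Step 3: the eigenvalue bound, which is the main obstacle.} Write $\hat n=|\hat n|\nu$ with $\nu$ a unit vector. Then
\[
 |\hat n|^2=\Theta_{aI}\Theta_{aJ}n_In_J=\gamma^{IJ}n_In_J,
\]
by $\Theta^T\Theta=\gamma^{-1}$. The eigenvalues of $\nu_a\partial\fhat^a$ are the SRHD characteristic speeds in direction $\nu$, which are known to lie in $(-1,1)$ for admissible states with subluminal velocity and subluminal sound speed; the latter holds for $\Gamma\le2$. Hence the eigenvalues of the Jacobian above are bounded in modulus by
\[
 |\beta^In_I|+\alpha\sqrt{\gamma^{IJ}n_In_J}=a_f .
\]
The delicate point is that symmetrization with respect to the entropy Hessian converts this real-eigenvalue bound into the quadratic-form inequality at every point on the segment. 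I would handle it by noting that $\partial_V\uhat$ symmetrizes each $\partial\fhat^a$. It follows that $\partial_VG$ and $\partial_V\mathbb W$ form a symmetric pencil whose generalized eigenvalues are exactly the characteristic speeds above, so the Rayleigh quotient is bounded by $a_f$.

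A second subtlety is that the segment $V(\theta)$ must stay inside the set of entropy variables of admissible states. I would argue this from the structure of $V$ in \eqref{eq:entropy-variables}: the image of the admissible set under $V$ is convex, being $\{V_4<0,\ |(V_2,V_3)|<-V_4\}$ with $V_1$ free. Since the final inequality depends only on the speeds being bounded pointwise by the geometric quantity $a_f$, and not on the states themselves, the bound holds uniformly along the segment and the result follows.
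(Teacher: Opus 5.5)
Your proposal is correct and follows essentially the same route as the paper's proof in Appendix~\ref{app:llf-speed-bound}. The shared steps are:
\begin{itemize}
\item the trapezoid-minus-integral representation of the central defect along the straight entropy-variable segment, which stays admissible because the entropy-variable image of the admissible set is the convex set $\mathbb R\times$(Lorentz cone);
\item the symmetrizer/Rayleigh-quotient bound by the spectral radius of the normal Jacobian $\frac{1}{j_2}\bigl(A_{an}\,\partial\fhat^a/\partial\uhat-C_nI_4\bigr)$;
\item the causal eigenvalue bound $|\beta^In_I|+\alpha\sqrt{\gamma^{IJ}n_In_J}$.
\end{itemize}
The only differences are cosmetic. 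You bound the combined integrand pointwise using $|2\theta-1|\le1$ and phrase the symmetrization as a symmetric-definite pencil. The paper instead first bounds $|\mathcal C_{LR}|$ by $\tfrac12\max_\theta\rho(\mathsf J_F)\,\mathcal D_{LR}$, using the symmetric similarity transform $K_\eta^{-1/2}\mathsf J_FK_\eta^{1/2}$.
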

\begin{proof}
See \ref{app:llf-speed-bound}.
\end{proof}
Section~\ref{sec:pcp} shows that the same causal bound also provides the
Lax--Friedrichs splitting used for physical-constraint preservation.

Define
\begin{equation}
 \mathcal E_h(t)
 :=\sum_K\mathcal E_K
 =\sum_KJ_K\sum_{i,j=0}^N
 \omega_i\omega_j\,w_{ij}\eta_{ij}.
 \label{eq:dg-discrete-entropy}
\end{equation}
\begin{theorem}[Semidiscrete entropy stability and weighted rest-mass conservation]
\label{thm:semidiscrete}
Consider \eqref{eq:dgsem-affine} with the LLF interface flux
\eqref{eq:dg-classical-llf} on a conforming affine tensor-product mesh.
Assume that $w>0$ at every volume and face GLL node, all nodal states are
admissible, and the prescribed geometry is single-valued at shared faces.
Under periodic boundary conditions, the scheme satisfies
\begin{equation}
 \frac{d\mathcal E_h}{dt}\le0,
 \label{eq:dg-global-entropy-inequality}
\end{equation}
and conserves the global weighted rest mass.
\end{theorem}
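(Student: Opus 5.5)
The plan is to sum the single-element balances of Proposition~\ref{lem:dg-single-element-balance} over all elements and then pair the face contributions across each interior face. Periodicity ensures that every face is shared by exactly two elements, $K$ and $K'$, with $n_{K',f}=-n_{K,f}$. Conformity and single-valued geometry ensure that the face GLL nodes, the quadrature weights, $J_f$, the geometry coefficients, and the causal speed $a_f$ coincide on the two sides. Since $\mathcal A$, $\mathcal C$, $w$, $\alpha$, $\beta$, and $\gamma^{IJ}$ are shared, $a_f$ in \eqref{eq:dg-causal-normal-speed} depends only on $|n|$.

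\textbf{Entropy.} Summing \eqref{eq:dg-element-entropy} over $K$ gives
\[
 \frac{d\mathcal E_h}{dt}=-\sum_f\Bigl\langle \widehat{\mathcal Q}_{n,K,f}+\widehat{\mathcal Q}_{n,K',f}\Bigr\rangle_{f,N}.
\]
First, the LLF flux is skew with respect to orientation: $\widehat{\mathbb H}^{\LLF}_{n,K',f}=-\widehat{\mathbb H}^{\LLF}_{n,K,f}$. This holds because the normal fluxes change sign, the roles of $\pm$ swap, and $a_f$ is unchanged. Similarly, $\Psi^{+}_{w,n,K,f}=-\Psi^{-}_{w,n,K',f}$. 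Writing $V^\pm,\Psi^\pm$ relative to $K$, the nodal sum becomes
\[
 (V^-)^T\widehat{\mathbb H}-\Psi^- -(V^+)^T\widehat{\mathbb H}+\Psi^+
 =-\Bigl(\jump{V}^T\widehat{\mathbb H}^{\LLF}_{n,K,f}-\jump{\Psi_{w,n,K,f}}\Bigr).
\]
By Proposition~\ref{thm:pcp-dominates-entropy-speed}, applicable because both traces are admissible and share the same face geometry, this quantity is $\ge0$ at each face node. The face quadrature has positive weights $\omega_q$ and $J_f>0$, so each face contributes a nonpositive amount to $d\mathcal E_h/dt$. This yields \eqref{eq:dg-global-entropy-inequality}. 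The assumption $w>0$ at all nodes is needed in order to invoke Proposition~\ref{lem:dg-single-element-balance}, namely the identity $d(\eta_w)/dt=V^T\dot{\mathbb W}$ and the recovery of $\uhat$.

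\textbf{Rest mass.} Take the $D$-component of \eqref{eq:dg-element-balance} and sum over $K$. The surface terms cancel pairwise by the same skew-symmetry of the LLF flux. The source term vanishes because the density component of $\mathbb S^h_{ij}$ is zero. To see this, note that $(\mathbb S_\perp)_D=0$ by \eqref{eq:reduced-suppressed-source} and $(\mathcal S_W)_D=0$, since the density row of $\mathcal S_{\rm V}$ is zero and $\mathcal L$ acts trivially on that row. Consequently the density rows of $B^I_{j_2}$, $B^I_{A_{aJ}}$, $B^I_{C^J}$ and $P^I_\perp$ are zero. By \eqref{eq:dg-weight-residual} the density row of $R^I_w$ is then zero as well, so \eqref{eq:dg-source-discretization} has no density component. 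Hence $\sum_K J_K\sum\omega_i\omega_j (wj_2D)_{ij}$ is constant in time.

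\textbf{Main obstacle.} The delicate step is the sign bookkeeping in the interface pairing. One must verify that $\widehat{\mathcal Q}$, defined in \eqref{eq:dg-numerical-entropy-flux} with the interior trace $V^-$, combines over the two sides into exactly the jump expression \eqref{eq:dg-llf-entropy-inequality}, with the sign conventions of $\varepsilon_{K,f}$ and $\jump{\cdot}=(\cdot)^+-(\cdot)^-$. A related point is that the LLF flux is single-valued across the face only because $a_f$ and the nodal geometry coincide on both sides. It is also worth checking that the density-row decomposition of $\mathcal S_W$ in \eqref{eq:dg-intrinsic-source-decomposition} is genuinely zero and not merely zero after contraction.
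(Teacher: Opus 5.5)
Your proposal is correct and follows the same route as the paper. You sum the element entropy balances \eqref{eq:dg-element-entropy}, pair the two traces on each shared face so that the numerical entropy fluxes combine into the jump defect \eqref{eq:dg-llf-entropy-inequality}, invoke Proposition~\ref{thm:pcp-dominates-entropy-speed}, and get rest-mass conservation from \eqref{eq:dg-element-balance} through cancellation of the shared fluxes. Your explicit orientation bookkeeping ($\widehat{\mathbb H}^{\LLF}_{n,K',f}=-\widehat{\mathbb H}^{\LLF}_{n,K,f}$, with $a_f$ even in $n$) and your check that the density row of $\mathbb S^h$ vanishes identically are details the paper's two-line proof leaves implicit, and both are correct.
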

\begin{proof}
Summing \eqref{eq:dg-element-entropy} over the mesh and applying
\eqref{eq:dg-llf-entropy-inequality} on each interior face gives
\eqref{eq:dg-global-entropy-inequality}; periodicity removes the exterior
boundary terms.  Weighted rest-mass conservation follows from
\eqref{eq:dg-element-balance} and cancellation of shared numerical fluxes.
\end{proof}



%% file: sections/05_pcp.tex
\section{Physical-constraint-preserving analysis}\label{sec:pcp}

We apply the transformed-state PCP framework of Wu~\cite{Wu2017} to the
present positive-weight DGSEM.  The transformed cone, Lax--Friedrichs
splitting, convex-decomposition argument, and conservative scaling limiter are
all independent of the spatial metric.  The reduction weight is assumed to
satisfy $w>0$ at the nodes, but it may vary spatially.

\subsection{Metric-independent admissible cone}

For a state vector $Z=(D,m,E)^T$, set
\begin{equation}
 \begin{aligned}
  R(Z)&:=\sqrt{D^2+|m|^2},
  &q(Z)&:=E-R(Z),\\
  \Gset&:=\{Z:D>0,\ q(Z)>0\}.
 \end{aligned}
 \label{eq:cone}
\end{equation}
For the Gamma-law gas, $\Gset$ is equivalent to positivity of the physical
rest-mass density and pressure together with the subluminal-velocity
constraint; see Wu~\cite{Wu2017}.  Its closure $\overline{\Gset}$ is obtained
by replacing the strict inequalities by non-strict ones.

\begin{lemma}[Cone properties]\label{lem:pcp-cone}
The set $\Gset$ is an open convex cone and $\overline{\Gset}$ is a closed
convex cone.  Moreover, for every $c>0$,
\[
 Z\in\Gset\iff cZ\in\Gset,
\]
and if $X\in\Gset$ and $Y_0\in\overline{\Gset}$, then
$X+Y_0\in\Gset$.
\end{lemma}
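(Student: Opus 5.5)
The plan is to treat $R$ as a Euclidean norm and derive all four assertions from its homogeneity and the triangle inequality. Write $Z=(D,m,E)^T$ and $R(Z)=|(D,m)|$, the Euclidean norm of the first block. Then $R$ is positively homogeneous, $R(cZ)=cR(Z)$ for $c>0$, and subadditive, $R(X+Y)\le R(X)+R(Y)$. Consequently $q(Z)=E-R(Z)$ is positively homogeneous and superadditive, $q(X+Y)\ge q(X)+q(Y)$. In particular $q$ is concave and continuous.

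\emph{Scaling.} For $c>0$ we have $D(cZ)=cD$ and $q(cZ)=c\,q(Z)$. The signs of both quantities are therefore preserved, which gives $Z\in\Gset\iff cZ\in\Gset$. The converse direction is the same statement with $1/c$ in place of $c$. This shows that $\Gset$ is a cone. The same homogeneity argument shows that $\overline{\Gset}$, defined by $D\ge0$ and $q\ge0$, is invariant under multiplication by $c>0$. It also contains $0$, so it is a cone as well.

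\emph{Convexity and the sum property.} If $X\in\Gset$ and $Y_0\in\overline{\Gset}$, then $D(X+Y_0)=D(X)+D(Y_0)>0$. Superadditivity gives $q(X+Y_0)\ge q(X)+q(Y_0)>0$, hence $X+Y_0\in\Gset$. Convexity of $\Gset$ follows by combining scaling with the sum property. For $X,Y\in\Gset$ and $\lambda\in(0,1)$, both $\lambda X$ and $(1-\lambda)Y$ lie in $\Gset$, so their sum lies in $\Gset$; the endpoint cases $\lambda\in\{0,1\}$ are trivial. The same computation with non-strict inequalities shows that $\overline{\Gset}$ is convex.

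\emph{Topology.} $\Gset$ is open because it is the preimage of $(0,\infty)^2$ under the continuous map $Z\mapsto(D,q(Z))$. The set $\{D\ge0,\ q\ge0\}$ is closed, being the preimage of the closed set $[0,\infty)^2$. It remains to confirm that this closed set is the topological closure of $\Gset$, which justifies the notation $\overline{\Gset}$.

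\emph{Main obstacle.} This closure identification is the only step needing care. Given $Z$ with $D\ge0$ and $q\ge0$, take $Z_\varepsilon=Z+\varepsilon(1,0,2)^T$. The direction $(1,0,2)^T$ lies in $\Gset$, since $D=1>0$ and $q=2-1>0$. By the sum property $Z_\varepsilon\in\Gset$ for every $\varepsilon>0$, and $Z_\varepsilon\to Z$ as $\varepsilon\to0$. Everything else is routine.
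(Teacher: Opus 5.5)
Your proposal is correct and follows essentially the same route as the paper: positive homogeneity and the triangle inequality for the Euclidean norm make $q$ positively homogeneous and superadditive (hence concave). Scaling, convexity, and the sum property $q(X+Y_0)\ge q(X)+q(Y_0)>0$ then follow, and your additional checks that $\Gset$ is open and that $\{D\ge0,\ q\ge0\}$ is exactly the topological closure of $\Gset$ (via $Z+\varepsilon(1,0,2)^T$) fill in details the paper leaves implicit.
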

\begin{proof}
The Euclidean norm is convex and positively homogeneous, hence $q$ is
concave and positively homogeneous.  Convexity and positive scaling follow
immediately.  The last assertion follows from
\[
 q(X+Y_0)\ge q(X)+q(Y_0)>0,
\]
together with positivity of the density component.
\end{proof}

At every node,
\begin{equation}
 \mathbb W=wW=wj_2\uhat,
 \qquad wj_2>0,
 \label{eq:pcp-weighted-state}
\end{equation}
so positive scaling gives the equivalent admissibility tests
\begin{equation}
 \uhat\in\Gset
 \iff W=j_2\uhat\in\Gset
 \iff \mathbb W=wW\in\Gset.
 \label{eq:pcp-state-equivalence}
\end{equation}
Thus the local orthonormal state, intrinsic W-state, and evolved state all use
the same metric-independent cone.

\subsection{Coordinate-direction Lax--Friedrichs splitting and source control}

For each fixed $I\in\{1,2\}$, with no summation over $I$, the coordinate flux
is
\begin{equation}
 \mathbb H^I(\mathbb W)
 =\mathcal A_{aI}\fhat^a(\uhat)-\mathcal C^I\uhat,
 \qquad
 \uhat=\frac{\mathbb W}{wj_2}.
 \label{eq:pcp-coordinate-flux}
\end{equation}
The intrinsic geometry map \eqref{eq:dg-geometry-inverse-map} gives
\[
 \frac{C^I}{j_2}=\beta^I,
 \qquad
 \frac{\sqrt{A_{aI}A_{aI}}}{j_2}
 =\alpha\sqrt{\gamma^{II}}.
\]
Accordingly, define the coordinate-direction causal bound
\begin{equation}
 a_I^{\mathrm{PCP}}
 :=|\beta^I|+\alpha\sqrt{\gamma^{II}}
 =\frac{|C^I|+\sqrt{A_{aI}A_{aI}}}{j_2},
 \qquad I=1,2.
 \label{eq:pcp-speed}
\end{equation}

\begin{lemma}[Lax--Friedrichs splitting]\label{lem:pcp-lxf-split}
Let $w>0$ and $\mathbb W\in\Gset$.  For either coordinate direction $I$, if
$a\ge a_I^{\mathrm{PCP}}$, then
\begin{equation}
 \mathbb W\pm\frac{\mathbb H^I(\mathbb W)}{a}
 \in\overline{\Gset}.
 \label{eq:pcp-lxf-splitting}
\end{equation}
\end{lemma}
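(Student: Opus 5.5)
Since $wj_2>0$ and $\Gset$ is a cone, I divide by $wj_2$ and reduce to the local orthonormal state. Using \eqref{eq:pcp-coordinate-flux},
\[
 \frac{1}{wj_2}\Bigl(\mathbb W\pm\frac{\mathbb H^I}{a}\Bigr)
 =\uhat\pm\frac{1}{a j_2}\bigl(A_{aI}\fhat^a(\uhat)-C^I\uhat\bigr)
 =\Bigl(1\mp\frac{\beta^I}{a}\Bigr)\uhat
 \pm\frac{\alpha\sqrt{\gamma^{II}}}{a}\,\fhat^{\nu}(\uhat),
\]
where I set $\nu_a:=A_{aI}/\sqrt{A_{bI}A_{bI}}$, a unit vector in the orthonormal frame, and $\fhat^\nu:=\nu_a\fhat^a$. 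This step uses the identities $C^I/j_2=\beta^I$ and $\sqrt{A_{aI}A_{aI}}/j_2=\alpha\sqrt{\gamma^{II}}$ stated just before \eqref{eq:pcp-speed}, together with linearity of $\fhat^a$ in the direction index. The point is that $\fhat^\nu$ is exactly the standard SRHD flux in the unit direction $\nu$. By rotational invariance of the SRHD algebra, it is also the flux of a one-dimensional SRHD problem along $\nu$.

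Write $\sigma:=\alpha\sqrt{\gamma^{II}}/a$ and $\tau:=\mp\beta^I/a$. Because $a\ge|\beta^I|+\alpha\sqrt{\gamma^{II}}$, we have $\sigma\ge0$ and $1+\tau\ge\sigma$. I then decompose
\[
 (1+\tau)\uhat\pm\sigma\fhat^\nu
 =(1+\tau-\sigma)\,\uhat+\sigma\bigl(\uhat\pm\fhat^\nu\bigr).
\]
The first term lies in $\overline{\Gset}$, since $1+\tau-\sigma\ge0$ and $\uhat\in\Gset$. It therefore suffices to show $\uhat\pm\fhat^\nu(\uhat)\in\overline{\Gset}$, which is the classical SRHD Lax--Friedrichs splitting property with speed $1$ (the speed of light). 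Lemma~\ref{lem:pcp-cone} (closed convex cone, positive scaling) then gives membership of the sum in $\overline{\Gset}$, and multiplying back by $wj_2>0$ yields \eqref{eq:pcp-lxf-splitting}.

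The main step is the SRHD splitting $\uhat\pm\fhat^\nu\in\overline{\Gset}$, which is due to Wu and Tang \cite{WuTang2015}, and to Wu \cite{Wu2017} in the present variables; I would cite it, and if a self-contained argument were wanted I would verify it as follows. Set $u:=\vhat\cdot\nu$, so $|u|<1$. The density component is $D(1\pm u)>0$, and the energy component is $E\pm\mhat\cdot\nu$, where $\mhat\pm p\nu$ is the momentum component. The required inequality is
\[
 \bigl(E\pm\mhat\cdot\nu\bigr)^2\ge D^2(1\pm u)^2+|\mhat\pm p\nu|^2 .
\]
Substituting $D=\rho L$, $\mhat=\rho hL^2\vhat$, $E=\rho hL^2-p$, I would expand both sides. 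Since $|\vhat|<1$, the inequality $E\ge R$ collapses to $\rho hL^2(1\pm u)-p\ge\sqrt{\cdots}$. The hard part is bounding $p$ against $\rho h$, which requires the Gamma-law constraint $\Gamma\le2$, giving $p<\rho h$. I expect this algebraic inequality to be the only real obstacle. Everything else is the geometric reduction above, which shows that the causal bound $a_I^{\rm PCP}$ is precisely the speed that makes the convex-combination weights nonnegative.
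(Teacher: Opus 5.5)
Your argument is correct and is essentially the paper's proof. The paper factors out $wj_2(1\mp\mathcal C^I/(awj_2))$ and applies the Wu--Tang splitting $\uhat\pm c\,\widehat e_a\fhat^a(\uhat)\in\overline{\Gset}$ for $0\le c\le1$; your decomposition $(1+\tau-\sigma)\uhat+\sigma(\uhat\pm\fhat^\nu)$ just makes explicit how that $c\le1$ form follows from the $c=1$ case.

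One correction to your optional self-contained check: $p<\rho h$ holds for every $\Gamma>1$ and is not what $\Gamma\le2$ provides. The bound actually needed is $\rho h\ge\rho+2p$, which gives $(E\pm\mhat\cdot\nu)^2-D^2(1\pm u)^2-|\mhat\pm p\nu|^2=(1\pm u)^2L^2\rho^2\bigl(h^2-1-2hp/\rho\bigr)\ge0$ and also ensures $E\pm\mhat\cdot\nu>0$.
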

\begin{proof}
By rotational invariance and the Lax--Friedrichs splitting property of the
SRHD admissible set \cite[Lemma~2.3(ii)--(iii)]{WuTang2015}, together with
the subluminal SRHD characteristic speeds, for every unit orthonormal vector
$\widehat e$,
\[
 \uhat\pm c\,\widehat e_a\fhat^a(\uhat)
 \in\overline{\Gset},
 \qquad 0\le c\le1.
\]
For fixed $I$, the vector
$\mathcal A_{aI}/\sqrt{\mathcal A_{bI}\mathcal A_{bI}}$ is a unit
orthonormal direction.  Using \eqref{eq:pcp-coordinate-flux},
\begin{align}
 \mathbb W\pm\frac{\mathbb H^I}{a}
 ={}&wj_2\left(1\mp\frac{\mathcal C^I}{awj_2}\right)
 \left[
 \uhat\pm
 \frac{\mathcal A_{aI}}
 {awj_2\mp\mathcal C^I}\fhat^a
 \right].
 \label{eq:pcp-lxf-factorization}
\end{align}
If $a\ge a_I^{\mathrm{PCP}}$, then
\[
 awj_2\mp\mathcal C^I
 \ge awj_2-|\mathcal C^I|
 \ge\sqrt{\mathcal A_{aI}\mathcal A_{aI}}.
\]
The exterior factor is therefore nonnegative and the vector coefficient of
$\fhat^a$ has norm at most one.  The local SRHD splitting and cone-scaling
property complete the proof.
\end{proof}

At a volume node, define the source escape rate by setting
$\lambda_S=0$ if $\mathbb S^h\in\overline{\Gset}$; otherwise let
$\lambda_S>0$ be determined by the first intersection of the source ray with
$\partial\Gset$,
\[
 \mathbb W+t\mathbb S^h\in\Gset
 \quad\text{for }0\le t<\lambda_S^{-1},
 \qquad
 \mathbb W+\frac{\mathbb S^h}{\lambda_S}\in\partial\Gset.
\]
Consequently, whenever $0<\Delta t\lambda_S<1$,
\begin{equation}
 \mathbb W+\Delta t\mathbb S^h
 =(1-\Delta t\lambda_S)\mathbb W
 +\Delta t\lambda_S
 \left(\mathbb W+\frac{\mathbb S^h}{\lambda_S}\right)
 \in\Gset.
 \label{eq:pcp-source-convex-step}
\end{equation}

\subsection{Forward-Euler cell-average PCP property}
\label{sec:pcp-fe-positive-weight}

Applying the standard high-order convex-decomposition argument
\cite{QinShuYang2016,Wu2017} gives a sufficient cell-average condition.  Let
$K$ be an affine tensor-product element as in
Section~\ref{sec:dg-affine-elements}.  Its
quadrature cell average is
\begin{equation}
 \overline{\mathbb W}_K
 :=\frac14\sum_{\boldsymbol\ell}
 \omega_{\ell_1}\omega_{\ell_2}
 \mathbb W_{\boldsymbol\ell},
 \qquad
 \boldsymbol\ell=(\ell_1,\ell_2).
 \label{eq:pcp-gll-average}
\end{equation}
By Proposition~\ref{lem:dg-single-element-balance}, one forward-Euler update
is
\begin{align}
 \overline{\mathbb W}_K^{\,n+1}
 ={}&\frac14\sum_{\boldsymbol\ell}
 \omega_{\ell_1}\omega_{\ell_2}
 \left(\mathbb W_{\boldsymbol\ell}
 +\Delta t\,\mathbb S^h_{\boldsymbol\ell}\right)\notag\\
 &-\frac{\Delta t}{|K|}
 \sum_{f\subset\partial K}\sum_{q=0}^N
 J_f\omega_q\widehat{\mathbb H}_{n,f,q}^{\LLF}.
 \label{eq:average-update}
\end{align}

\begin{theorem}[Forward-Euler cell-average PCP]\label{thm:fe-pcp}
Assume $w>0$ at every volume GLL node of $K$ and at every face node at which
an LLF flux is evaluated.  Assume that all volume states and both states at
every such face node belong to $\Gset$, that the prescribed face geometry is
single-valued, and that
\[
 a_{f,q}\ge a_{I(f)}^{\mathrm{PCP}}.
\]
Let $\boldsymbol\ell(f,q)$ denote the volume GLL node of $K$ coinciding with
the face node $(f,q)$.  If, for every volume node,
\begin{equation}
 \Delta t\left[
 \lambda_{S,\boldsymbol\ell}
 +\frac{4}
 {\omega_{\ell_1}\omega_{\ell_2}|K|}
 \sum_{\substack{(f,q):\\
 \boldsymbol\ell(f,q)=\boldsymbol\ell}}
 \omega_qJ_fa_{f,q}
 \right]<1,
 \label{eq:nodal-cfl}
\end{equation}
then $\overline{\mathbb W}_K^{\,n+1}\in\Gset$.
\end{theorem}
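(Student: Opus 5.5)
The proof will be a convex-decomposition argument applied to the forward-Euler average update \eqref{eq:average-update}. The plan is to write $\overline{\mathbb W}_K^{\,n+1}$ as a sum of three kinds of terms: positive multiples of the admissible nodal states $\mathbb W_{\boldsymbol\ell}\in\Gset$, nonnegative multiples of source-ray endpoints in $\overline{\Gset}$, and nonnegative multiples of Lax--Friedrichs split states in $\overline{\Gset}$. Lemma~\ref{lem:pcp-cone} (convex cone, $X+Y_0\in\Gset$) then gives $\overline{\mathbb W}_K^{\,n+1}\in\Gset$.

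\textbf{Step 1: split each face flux.} At a face node $(f,q)$, the inflow speed $a=a_{f,q}$ is common to both traces because the face geometry is single-valued. Direct algebra on \eqref{eq:dg-classical-llf} gives
\[
 -\widehat{\mathbb H}^{\LLF}_{n,K,f}
 =-a\,\mathbb W^-
 +\frac a2\Bigl(\mathbb W^--\frac{\mathbb H^-_{n,K,f}}{a}\Bigr)
 +\frac a2\Bigl(\mathbb W^+-\frac{\mathbb H^+_{n,K,f}}{a}\Bigr).
\]
Since $\mathbb H_{n,K,f}=\varepsilon_{K,f}\mathbb H^{I(f)}$ with $\varepsilon_{K,f}=\pm1$, each bracket has the form $\mathbb W\mp\mathbb H^{I}/a$. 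Lemma~\ref{lem:pcp-lxf-split} applies to both traces, which are in $\Gset$ with $w>0$ and satisfy $a\ge a_{I(f)}^{\rm PCP}$ evaluated on the shared face geometry. Hence both brackets lie in $\overline{\Gset}$. Multiplied by $\Delta t J_f\omega_q/|K|\ge0$, these contribute nonnegative multiples of $\overline{\Gset}$ elements.

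\textbf{Step 2: regroup around each volume node.} The remaining face term is $-\Delta t J_f\omega_q a_{f,q}\mathbb W^-/|K|$, and $\mathbb W^-$ is the volume state at $\boldsymbol\ell(f,q)$. Collecting these terms node by node, with corner nodes receiving contributions from two faces (this is exactly the sum in \eqref{eq:nodal-cfl}), yields
\[
 \overline{\mathbb W}_K^{\,n+1}
 =\sum_{\boldsymbol\ell}\frac{\omega_{\ell_1}\omega_{\ell_2}}4
 \Bigl[\bigl(1-\Delta t\,\mu_{\boldsymbol\ell}\bigr)\mathbb W_{\boldsymbol\ell}
 +\Delta t\,\lambda_{S,\boldsymbol\ell}\,\mathbb W_{\boldsymbol\ell}+\Delta t\,\mathbb S^h_{\boldsymbol\ell}\Bigr]
 +(\text{terms in }\overline{\Gset}).
\]
Here $\mu_{\boldsymbol\ell}$ denotes the full bracket in \eqref{eq:nodal-cfl}.

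\textbf{Step 3: the source part.} If $\lambda_{S,\boldsymbol\ell}>0$, then $\lambda_S\mathbb W+\mathbb S^h=\lambda_S(\mathbb W+\mathbb S^h/\lambda_S)\in\overline{\Gset}$ by the definition of the source escape rate. If $\lambda_{S,\boldsymbol\ell}=0$, then $\mathbb S^h\in\overline{\Gset}$ directly. The CFL condition \eqref{eq:nodal-cfl} gives $1-\Delta t\mu_{\boldsymbol\ell}>0$, so each $(1-\Delta t\mu_{\boldsymbol\ell})\mathbb W_{\boldsymbol\ell}\in\Gset$ by positive scaling.

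\textbf{Conclusion.} A sum of elements of $\Gset$ with nonnegative combinations of elements of $\overline{\Gset}$ lies in $\Gset$ by Lemma~\ref{lem:pcp-cone}, which proves $\overline{\mathbb W}_K^{\,n+1}\in\Gset$.

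The main obstacle is the bookkeeping in Step 2. It requires checking the normalizations, namely $|K|=4J_K$ and the factor $\tfrac14$ in \eqref{eq:pcp-gll-average}, which together produce the factor $4/(\omega_{\ell_1}\omega_{\ell_2}|K|)$. It also requires checking that the sign $\varepsilon_{K,f}$ is harmlessly absorbed by the $\pm$ in Lemma~\ref{lem:pcp-lxf-split}, and that the exterior trace, which is not a node of $K$, is handled entirely by Step 1 and never by the CFL condition.
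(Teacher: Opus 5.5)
Your proposal is correct and follows the paper's proof essentially step for step. It uses the same expansion of the LLF flux into nonnegative multiples of the split states $\mathbb W^{\pm}_{f,q}-\varepsilon_{K,f}\mathbb H^{I(f)}(\mathbb W^{\pm}_{f,q})/a_{f,q}\in\overline{\Gset}$, the same subtraction of $\Delta t\,\omega_qJ_fa_{f,q}/|K|$ from the interior nodal coefficient, the same source-ray treatment via \eqref{eq:pcp-source-convex-step}, and the same final convexity argument. Your explicit checks of the normalization $|K|=4J_K$ and of the $\lambda_S=0$ case only fill in details the paper leaves implicit.
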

\begin{proof}
Order the two traces at each LLF face so that $\mathbb W_{f,q}^-$ is the
trace from $K$.  Expanding the LLF flux in \eqref{eq:average-update} groups
each face contribution into nonnegative multiples of
\[
 \mathbb W_{f,q}^{\pm}
 -\frac{\varepsilon_{K,f}}{a_{f,q}}
 \mathbb H^{I(f)}(\mathbb W_{f,q}^{\pm})
 \in\overline{\Gset},
\]
by Lemma~\ref{lem:pcp-lxf-split}, while subtracting
$\Delta t\,\omega_qJ_fa_{f,q}/|K|$ from the coefficient of the
corresponding interior nodal state.  At each volume node, the source is treated
by \eqref{eq:pcp-source-convex-step}.  Condition~\eqref{eq:nodal-cfl} is
precisely the positivity condition for the remaining coefficient of every
strictly admissible volume state.  Convexity of the cone then proves the
claim.
\end{proof}

\subsection{Conservative weight-compatible scaling}
\label{sec:pcp-weighted-scaling}

We use a single weight-compatible form of the conservative scaling framework
of Zhang and Shu~\cite{ZhangShu2010}, with its relativistic adaptations
\cite{QinShuYang2016,Wu2017}, for the densitized polynomial
$\mathbb W_h=\chi\widehat U_h$, where
\begin{equation}
 \chi_{\boldsymbol\ell}:=w_{\boldsymbol\ell}j_{2,\boldsymbol\ell}>0.
 \label{eq:pcp-chi-definition}
\end{equation}
 Let $\mu_{\boldsymbol\ell}>0$ be the mapped GLL mass weights and
define
\begin{equation}
 \overline{\mathbb W}_K
 :=\frac{\sum_{\boldsymbol\ell}\mu_{\boldsymbol\ell}
 \mathbb W_{\boldsymbol\ell}}
 {\sum_{\boldsymbol\ell}\mu_{\boldsymbol\ell}},
 \qquad
 \overline\chi_K
 :=\frac{\sum_{\boldsymbol\ell}\mu_{\boldsymbol\ell}
 \chi_{\boldsymbol\ell}}
 {\sum_{\boldsymbol\ell}\mu_{\boldsymbol\ell}}.
 \label{eq:pcp-weighted-averages}
\end{equation}
Assume $\overline\chi_K>0$ and
$\overline{\mathbb W}_K\in\Gset$.  Cone homogeneity then gives the local
orthonormal anchor
\begin{equation}
 \widehat U_{A,K}
 :=\frac{\overline{\mathbb W}_K}{\overline\chi_K}\in\Gset,
 \qquad
 \mathbb W_{\boldsymbol\ell}^{\rm ref}
 :=\chi_{\boldsymbol\ell}\widehat U_{A,K}.
 \label{eq:pcp-weight-anchor}
\end{equation}
The reference polynomial represents a constant local orthonormal state and
has exactly the original conservative average.  Write
\begin{equation}
 \widehat U_{\boldsymbol\ell}^{(0)}
 :=\frac{\mathbb W_{\boldsymbol\ell}}{\chi_{\boldsymbol\ell}}.
 \label{eq:pcp-initial-local-state}
\end{equation}

For $Z=(D,m,E)^T\in\Gset$, define the normalized cone margin
\begin{equation}
 \nu(Z):=\frac{q(Z)}{E+R(Z)}
 =\frac{E-R(Z)}{E+R(Z)}\in(0,1],
 \label{eq:pcp-normalized-margin}
\end{equation}
and, for $0\le\delta<1$, set
\begin{equation}
 g_\delta(Z):=(1-\delta)E-(1+\delta)R(Z).
 \label{eq:pcp-relative-margin}
\end{equation}
The condition $g_\delta(Z)\ge0$ is equivalent to $\nu(Z)\ge\delta$.

To match the numerical floors without demanding more than the admissible
anchor itself contains, define
\begin{equation}
 \varepsilon_{D,K}^{\rm eff}
 :=\min\{\varepsilon_D,(\widehat U_{A,K})_D\},
 \qquad
 \varepsilon_{q,K}^{\rm eff}
 :=\min\{\varepsilon_q,q(\widehat U_{A,K})\},
 \label{eq:pcp-effective-floors}
\end{equation}
and set
\begin{equation}
 \delta_K:=\eta_{\rm PCP}\,\nu(\widehat U_{A,K}),
 \qquad 0\le\eta_{\rm PCP}<1.
 \label{eq:pcp-weighted-margin-target}
\end{equation}
In the numerical implementation, we take
$\varepsilon_D=\varepsilon_q=10^{-11}$ and $\eta_{\rm PCP}=0.1$.
The corresponding closed convex certification set is
\begin{equation}
 \Gset_K^{\rm cert}
 :=\left\{
 Z:\ D\ge\varepsilon_{D,K}^{\rm eff},\quad
 q(Z)\ge\varepsilon_{q,K}^{\rm eff},\quad
 g_{\delta_K}(Z)\ge0
 \right\}.
 \label{eq:pcp-certification-set}
\end{equation}
It contains $\widehat U_{A,K}$, with strict relative-margin slack when
$\eta_{\rm PCP}<1$.

The limiter applies three explicit conservative scaling steps.  First set
\begin{equation}
 D_{\min}^{(0)}:=\min_{\boldsymbol\ell}
 (\widehat U_{\boldsymbol\ell}^{(0)})_D,
 \qquad
 \theta_D:=
 \begin{cases}
  1,
  &D_{\min}^{(0)}\ge\varepsilon_{D,K}^{\rm eff},\\[2mm]
  \displaystyle
  \frac{(\widehat U_{A,K})_D-\varepsilon_{D,K}^{\rm eff}}
       {(\widehat U_{A,K})_D-D_{\min}^{(0)}},
  &D_{\min}^{(0)}<\varepsilon_{D,K}^{\rm eff}.
 \end{cases}
 \label{eq:pcp-density-factor}
\end{equation}
Only the density component is scaled in this step:
\begin{equation}
 \begin{aligned}
 (\mathbb W_{\boldsymbol\ell}^{(1)})_D
 &= (\mathbb W_{\boldsymbol\ell}^{\rm ref})_D
 +\theta_D\left[(\mathbb W_{\boldsymbol\ell})_D
 -(\mathbb W_{\boldsymbol\ell}^{\rm ref})_D\right],\\
 (\mathbb W_{\boldsymbol\ell}^{(1)})_{m_a}
 &= (\mathbb W_{\boldsymbol\ell})_{m_a},\quad a=1,2,
 \qquad
 (\mathbb W_{\boldsymbol\ell}^{(1)})_E
 = (\mathbb W_{\boldsymbol\ell})_E.
 \end{aligned}
 \label{eq:pcp-density-stage}
\end{equation}
Denote $\widehat U_{\boldsymbol\ell}^{(1)}
:=\frac{\mathbb W_{\boldsymbol\ell}^{(1)}}
{\chi_{\boldsymbol\ell}}$.

Next define
\begin{equation}
 q_{\min}^{(1)}:=\min_{\boldsymbol\ell}
 q(\widehat U_{\boldsymbol\ell}^{(1)}),
 \qquad
 \theta_q:=
 \begin{cases}
  1,
  &q_{\min}^{(1)}\ge\varepsilon_{q,K}^{\rm eff},\\[2mm]
  \displaystyle
  \frac{q(\widehat U_{A,K})-\varepsilon_{q,K}^{\rm eff}}
       {q(\widehat U_{A,K})-q_{\min}^{(1)}},
  &q_{\min}^{(1)}<\varepsilon_{q,K}^{\rm eff},
 \end{cases}
 \label{eq:pcp-cone-factor}
\end{equation}
and scale the complete state with this single elementwise factor:
\begin{equation}
 \mathbb W_{\boldsymbol\ell}^{(2)}
 :=\mathbb W_{\boldsymbol\ell}^{\rm ref}
 +\theta_q\left(
 \mathbb W_{\boldsymbol\ell}^{(1)}
 -\mathbb W_{\boldsymbol\ell}^{\rm ref}\right),
 \qquad
 \widehat U_{\boldsymbol\ell}^{(2)}
 :=\frac{\mathbb W_{\boldsymbol\ell}^{(2)}}
 {\chi_{\boldsymbol\ell}}.
 \label{eq:pcp-cone-stage}
\end{equation}

Finally, with
\begin{equation}
 g_{A,K}:=g_{\delta_K}(\widehat U_{A,K})
 =(1-\eta_{\rm PCP})q(\widehat U_{A,K})>0,
 \qquad
 g_{\min}^{(2)}:=\min_{\boldsymbol\ell}
 g_{\delta_K}(\widehat U_{\boldsymbol\ell}^{(2)}),
 \label{eq:pcp-margin-data}
\end{equation}
set
\begin{equation}
 \theta_\delta:=
 \begin{cases}
  1,&g_{\min}^{(2)}\ge0,\\[1mm]
  \displaystyle\frac{g_{A,K}}{g_{A,K}-g_{\min}^{(2)}},
  &g_{\min}^{(2)}<0,
 \end{cases}
 \qquad
 \mathbb W_{\boldsymbol\ell}^{L}
 :=\mathbb W_{\boldsymbol\ell}^{\rm ref}
 +\theta_\delta\left(
 \mathbb W_{\boldsymbol\ell}^{(2)}
 -\mathbb W_{\boldsymbol\ell}^{\rm ref}\right).
 \label{eq:pcp-margin-stage}
\end{equation}
The superscripts $(1)$, $(2)$, and $L$ denote the outputs of the density,
cone, and final margin stages, respectively, and
$\widehat U_{\boldsymbol\ell}^{L}
:=\mathbb W_{\boldsymbol\ell}^{L}/\chi_{\boldsymbol\ell}$ is the final local
orthonormal state at GLL node $\boldsymbol\ell$.  The first two stages impose
absolute floors on $D$ and $q$.  An absolute $q$ floor, however, does not
control the relative proximity to the cone boundary $E=R$: a large-magnitude
state may satisfy that floor while $\nu=q/(E+R)$ is arbitrarily small.  The
third stage therefore provides a scale-invariant numerical buffer.  By
construction,
$\nu(\widehat U_{\boldsymbol\ell}^{L})
\ge\eta_{\rm PCP}\,\nu(\widehat U_{A,K})$, so every nodal state retains a fixed
fraction of the anchor state's normalized cone margin.  This improves the
robustness of primitive recovery near the admissible-set boundary.

\begin{theorem}[Conservative weight-compatible PCP scaling]
\label{thm:pcp-weighted-scaling}
Assume $\chi_{\boldsymbol\ell}>0$ at every node,
$\overline\chi_K>0$, and $\overline{\mathbb W}_K\in\Gset$.  The explicit
three-stage scaling \eqref{eq:pcp-density-stage},
\eqref{eq:pcp-cone-stage}, and \eqref{eq:pcp-margin-stage} preserves the
conservative cell average and places every nodal $\widehat U$ state in
$\Gset_K^{\rm cert}$.  If the unlimited polynomial already satisfies all
certification conditions, then $\theta_D=\theta_q=\theta_\delta=1$.
\end{theorem}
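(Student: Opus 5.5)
The plan is to track one invariant through each of the three stages. After each stage the nodal states should be convex combinations, nodewise in the local variable $\widehat U$, of the anchor $\widehat U_{A,K}$ and the previous stage's states. Every stage has the affine form $\mathbb W^{\rm new}_{\boldsymbol\ell}=\mathbb W^{\rm ref}_{\boldsymbol\ell}+\theta(\mathbb W^{\rm old}_{\boldsymbol\ell}-\mathbb W^{\rm ref}_{\boldsymbol\ell})$, with $\theta\in[0,1]$; the density stage applies this to the $D$ component only. Dividing by $\chi_{\boldsymbol\ell}>0$ gives
\[
\widehat U^{\rm new}_{\boldsymbol\ell}=(1-\theta)\widehat U_{A,K}+\theta\,\widehat U^{\rm old}_{\boldsymbol\ell}.
\]

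\textbf{Conservation.} Since $\sum_{\boldsymbol\ell}\mu_{\boldsymbol\ell}\mathbb W^{\rm ref}_{\boldsymbol\ell}=\overline\chi_K\widehat U_{A,K}\sum_{\boldsymbol\ell}\mu_{\boldsymbol\ell}=\sum_{\boldsymbol\ell}\mu_{\boldsymbol\ell}\mathbb W_{\boldsymbol\ell}$ by \eqref{eq:pcp-weight-anchor}, each stage changes the $\mu$-weighted sum by $\theta$ times a zero-average difference. The average of $\mathbb W^{(1)}$ therefore equals that of $\mathbb W$, and by induction so do the averages of $\mathbb W^{(2)}$ and $\mathbb W^L$. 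The density stage needs a separate remark: its untouched components are trivially conserved, and its $D$ component is conserved by the same identity applied componentwise.

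\textbf{Constraint propagation.} I handle the stages in order.

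For the density stage, $\theta_D\in(0,1]$ because $\varepsilon^{\rm eff}_{D,K}\le(\widehat U_{A,K})_D$. When $\theta_D<1$ we have $D^{(0)}_{\min}<\varepsilon^{\rm eff}_{D,K}\le(\widehat U_{A,K})_D$, so the denominator is positive. Nodewise, $D^{(1)}_{\boldsymbol\ell}$ is affine in $D^{(0)}_{\boldsymbol\ell}$ with positive slope. Its minimum therefore equals exactly $\varepsilon^{\rm eff}_{D,K}$ when limiting, and otherwise is at least $\varepsilon^{\rm eff}_{D,K}$.

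For the cone stage I use concavity of $q$ (Lemma~\ref{lem:pcp-cone}):
\[
q(\widehat U^{(2)}_{\boldsymbol\ell})\ge(1-\theta_q)q(\widehat U_{A,K})+\theta_q q(\widehat U^{(1)}_{\boldsymbol\ell})\ge(1-\theta_q)q(\widehat U_{A,K})+\theta_q q^{(1)}_{\min}=\varepsilon^{\rm eff}_{q,K}
\]
in the limiting case, and $\theta_q\in(0,1]$ by the same denominator argument. Since the $D$ component is linear and both $(\widehat U_{A,K})_D$ and $D^{(1)}_{\boldsymbol\ell}$ are at least $\varepsilon^{\rm eff}_{D,K}$, the density floor survives.

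For the margin stage I observe that $g_\delta$ is concave, because $R$ is convex and $1+\delta>0$. This gives
\[
g_{\delta_K}(\widehat U^L_{\boldsymbol\ell})\ge(1-\theta_\delta)g_{A,K}+\theta_\delta g^{(2)}_{\min}=0
\]
when $g^{(2)}_{\min}<0$. Here I check $g_{A,K}=(1-\eta_{\rm PCP})q(\widehat U_{A,K})$ by direct algebra from $\delta_K=\eta_{\rm PCP}\,q/(E+R)$, which is positive since $\widehat U_{A,K}\in\Gset$. The $D$ and $q$ floors persist because the feasible set for each is convex and contains both endpoints. Hence every $\widehat U^L_{\boldsymbol\ell}$ lies in the intersection $\Gset_K^{\rm cert}$. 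For the final claim: if all certification conditions already hold, each "min" test is passed, so every $\theta$ equals $1$. One also needs that $\widehat U_{A,K}\in\Gset_K^{\rm cert}$, which follows from the definitions of the effective floors and from $g_{A,K}>0$.

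\textbf{Main obstacle.} The delicate point is that the cone stage measures $q$ at nodes where $\widehat U^{(1)}$ may lie outside $\Gset$, because its density was altered while $m$ and $E$ were not. The concavity bound must therefore be used for $q$ as a function on all of $\mathbb R^4$, not only on the cone. The same applies to $g_{\delta}$, and to the margin stage, where $g^{(2)}_{\min}$ may be very negative. Verifying that these functions are globally concave, and that every $\theta\in(0,1]$ even in these extreme cases, is where I would be careful.
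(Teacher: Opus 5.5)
Your proposal is correct and follows the paper's proof essentially step by step. Conservation comes from contracting toward the reference polynomial $\chi\widehat U_{A,K}$, whose average is $\overline{\mathbb W}_K$; the new floors come from concavity of $q$ and $g_{\delta_K}$; and the earlier floors persist because their superlevel sets are convex and contain the anchor. One harmless correction: $\theta_D$ and $\theta_q$ lie in $[0,1]$, not $(0,1]$, since they can equal $0$ when the effective floor coincides with the anchor value, i.e.\ when $(\widehat U_{A,K})_D\le\varepsilon_D$ or $q(\widehat U_{A,K})\le\varepsilon_q$. Your convex-combination argument needs only $\theta\in[0,1]$, and the global concavity you flag as the main obstacle is immediate because $R$ is the Euclidean norm on all of $\mathbb R^4$.
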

\begin{proof}
The anchor belongs to $\Gset$ by the positive-scaling property of
Lemma~\ref{lem:pcp-cone}.  Because $(\widehat U_{A,K})_D$ is the
$\mu_{\boldsymbol\ell}\chi_{\boldsymbol\ell}$-weighted mean of the nodal
local densities, \eqref{eq:pcp-density-factor}--
\eqref{eq:pcp-density-stage} impose the density floor while preserving the
density component of $\overline{\mathbb W}_K$; the remaining components are
unchanged.  Concavity of $q$ gives
\[
 q(\widehat U_{\boldsymbol\ell}^{(2)})
 \ge q(\widehat U_{A,K})
 +\theta_q\left[q(\widehat U_{\boldsymbol\ell}^{(1)})
 -q(\widehat U_{A,K})\right]
 \ge\varepsilon_{q,K}^{\rm eff}.
\]
The same argument for the concave function $g_{\delta_K}$ shows that
\eqref{eq:pcp-margin-stage} imposes $g_{\delta_K}\ge0$.  The two full-state
contractions preserve the previously imposed constraints because their
superlevel sets are convex and contain the anchor.

Every stage is either componentwise unchanged or a contraction toward
$\mathbb W^{\rm ref}$, whose average equals
$\overline\chi_K\widehat U_{A,K}=\overline{\mathbb W}_K$.
Consequently all stages preserve the conservative average.  If the unlimited
states already satisfy the three conditions, each definition selects unity.
\end{proof}

The same scaling has a conservative extension to axis cells on which $w$ has
a simple normal zero, including the polar-axis cells used below; see
\ref{app:simple-zero-treatment}.

\subsection{Stage-average rejection}
\label{sec:pcp-stage-rejection}

The numerical floors are distinct from the strict admissibility of the
conservative cell average. If a raw forward-Euler candidate or an SSP convex
combination satisfies
\begin{equation}
 (\overline{\mathbb W}_K)_D\le0
 \qquad\text{or}\qquad
 q(\overline{\mathbb W}_K)\le0
 \quad\text{for some }K,
 \label{eq:pcp-average-rejection}
\end{equation}
the candidate step is rejected and recomputed with the timestep reduced by a
factor of two. This retry procedure does not drive the timestep indefinitely
toward zero: Theorem~\ref{thm:fe-pcp} guarantees a positive admissible
timestep threshold under which the forward-Euler cell averages remain in
$\Gset$. Hence, after finitely many halvings, the stage-average admissibility
test is satisfied.

If the cell average lies in $\Gset$ but falls below a prescribed numerical
floor, the step is not rejected. Instead, the corresponding scaling may
collapse the polynomial to its cell average; for a variable-densitization
element, the effective requirements \eqref{eq:pcp-effective-floors} likewise
collapse to the admissible local anchor when necessary. Thus the numerical
floors do not alter an already admissible conservative cell average.

%% file: sections/06_algorithm.tex
\section{Fully discrete scheme}\label{sec:algorithm}

We complete the DGSEM specification used in the numerical experiments by
describing primitive recovery, classical LLF interface fluxes, stage-wise
oscillation elimination (OE), conservative PCP scaling, SSPRK$(3,3)$ time
stepping, and the geometry-based explicit timestep.

\subsection{Residual evaluation and causal face stabilization}
\label{sec:algorithm-spatial}

The stationary numerical geometry state is
\[
 Y
 =\left(j_2,A_{11},A_{12},A_{22},C^1,C^2\right)^T,
 \qquad A_{21}=0,
\]
with the physical reduction weight $w$ stored separately. At the volume and
face GLL nodes, we form the intrinsic geometry, $w$, and the complete
coefficients $\mathcal A=wA$ and $\mathcal C=wC$ once and cache
\[
 (\mathsf D_Ij_2)_{\boldsymbol\ell},\quad
 (\mathsf D_Iw)_{\boldsymbol\ell},\quad
 (\mathsf D_I\mathcal A)_{\boldsymbol\ell},\quad
 (\mathsf D_I\mathcal C)_{\boldsymbol\ell},
 \qquad I=1,2.
\]
Whenever ADM quantities or their derivatives are needed, they are reconstructed
from the intrinsic $Y$ and its derivatives through
\eqref{eq:dg-geometry-inverse-map} and
\eqref{eq:dg-directional-alpha}--\eqref{eq:dg-directional-gamma}.

At nodes where $w>0$, the reduction weight permits conversion of the
evolved state to the local orthonormal conservative state by
\begin{equation}
 \widehat U_{\boldsymbol\ell}
 =\frac{\mathbb W_{\boldsymbol\ell}}
 {w_{\boldsymbol\ell}j_{2,\boldsymbol\ell}}
 =\left(D,\widehat m_1,\widehat m_2,E\right)^T.
 \label{eq:algorithm-local-state}
\end{equation}
For an axis cell with a simple zero of $w$ on its axis face, the regular trace
is supplied without dividing by the stored zero state; see
\ref{app:simple-zero-treatment}.

For the Gamma-law equation of state, primitive recovery reduces to a scalar
nonlinear solve for the pressure.  Once $p$ is obtained,
\begin{equation}
 \widehat v_a=\frac{\widehat m_a}{E+p},
 \qquad
 L=(1-|\widehat v|^2)^{-1/2},
 \qquad
 \rho=\frac{D}{L},
 \qquad
 h=1+\frac{\Gamma}{\Gamma-1}\frac{p}{\rho}.
 \label{eq:algorithm-primitive-reconstruction}
\end{equation}
The recovered primitive variables determine the four-velocity and the
stress--energy tensor in \eqref{eq:perfect-fluid}, and hence the Valencia
source \eqref{eq:valencia-source}. Metric derivatives are reconstructed from
the cached intrinsic geometry derivatives, and the weighted source is
assembled directly from the finite product derivatives of
Section~\ref{sec:dgsem},
\begin{align}
 \mathbb S^h_{\boldsymbol\ell}
 ={}&\sum_{I=1}^2\kappa_I\bigl[
 wB_{j_2}^I\mathsf D_Ij_2
 +B_{A_{aJ}}^I\mathsf D_I(wA_{aJ})
 +B_{C^J}^I\mathsf D_I(wC^J)
 +R_w^I\mathsf D_Iw\bigr]_{\boldsymbol\ell}.
 \label{eq:algorithm-source}
\end{align}

The volume term uses the entropy-conservative flux of
Section~\ref{sec:dgsem}, while element interfaces use the classical LLF flux
\eqref{eq:dg-classical-llf} with the dissipation coefficient $a_f$ in
\eqref{eq:dg-causal-normal-speed}. This geometry-only coefficient supplies the
dissipation required by both the classical-LLF entropy inequality and the PCP
Lax--Friedrichs splitting. We denote the resulting semidiscrete operator by
\[
 \frac{d\mathbb W_h}{dt}=\mathcal R_h(\mathbb W_h).
\]

\subsection{Conservative local-state oscillation elimination}
\label{sec:algorithm-oe}

High-order DG approximations may develop spurious oscillations near shocks and
other under-resolved structures. As the first stabilization step at each
Runge--Kutta stage, we apply an OE procedure similar to those in
\cite{PengSunWu2025,CaoPengWu2025} to the regular orthonormal-frame
conservative state $\widehat U$, rather than to the densitized variable
$\mathbb W=\chi\widehat U$. This choice prevents the stationary geometric
factor $\chi$ from being interpreted as fluid variation. 
Another key modification is the direction-matched cross-line normalization used in the OE indicator, introduced below.

\subsubsection{Directional OE indicator}

On an affine tensor-product element $K$, let
$\widehat U_h\in\mathbb Q_N(K)^4$ be the GLL interpolant of the nodal states
in~\eqref{eq:algorithm-local-state}, let $\mu_{K,\boldsymbol\ell}>0$ be the
mapped GLL mass weights, and let $\Delta x_{K,I}$ be the element width in direction
$I$. For $m=0,\ldots,N$, the order-$m$ damping rate is
\begin{equation}
\delta_m^{(K)}
:=\sum_{I=1}^2
\lambda_{K,I}^{\rm PCP}\,
\sigma_{m,I}^{(K)}(\widehat U_h),
\label{eq:oe-damping-rate}
\end{equation}
where the effective directional causal rate is
\begin{equation}
\lambda_{K,I}^{\rm PCP}
:=\max_{\boldsymbol\ell\in K}
\frac{a_I^{\rm PCP}(x_{\boldsymbol\ell})}
{\Delta x_{K,I}/(2N+1)},
\qquad
\lambda_K^{\rm PCP}
:=\max_{I=1,2}\lambda_{K,I}^{\rm PCP}.
\label{eq:oe-lambda}
\end{equation}
Here $a_I^{\rm PCP}$ is the geometry-only causal speed
\eqref{eq:pcp-speed}, and $\Delta x_{K,I}/(2N+1)$ is the effective DG resolution
length.

To construct the dimensionless directional OE indicator
$\sigma_{m,I}^{(K)}$, let
$\mathcal F_{K,I}^{\rm int}$ denote the set of interior faces of $K$ normal
to direction $I$, and define the corresponding jump amplitude by
\begin{subequations}
\label{eq:oe-directional-indicators}
\begin{equation}
\mathcal J_{m,I,K}^{(k)}
:=
\sum_{f\in\mathcal F_{K,I}^{\rm int}}
\left[
\frac1{|f|}\int_f
\left|\jump{\partial_I^m\widehat U_h^{(k)}}\right|^2\,\dd s
\right]^{1/2},
\label{eq:oe-directional-jump}
\end{equation}
where the face integrals are evaluated by one-dimensional GLL quadrature.
For every component--direction pair retained by the noise-level screening
specified below, the componentwise directional OE indicator is
\begin{equation}
\sigma_{m,I}^{(K)}(\widehat U_h^{(k)})
:=
\frac{(2m+1)(\Delta x_{K,I})^m}
{2\,m!\,\Delta_{k,I,K}}
\mathcal J_{m,I,K}^{(k)}.
\label{eq:oe-directional-indicator}
\end{equation}
\end{subequations}
 The
system directional OE indicator is obtained by taking the component maximum
separately in each direction:
\begin{equation}
\sigma_{m,I}^{(K)}(\widehat U_h)
:=\max_{1\le k\le4}
\sigma_{m,I}^{(K)}(\widehat U_h^{(k)}).
\label{eq:oe-system-indicator}
\end{equation}
The factor $(\Delta x_{K,I})^m$ balances the $m$th derivative jump, while
$\Delta_{k,I,K}$ supplies the component amplitude against which that jump is
measured.

\subsubsection{Cross-line normalization}
\label{sec:oe-cross-line-scaling}

The choice of this amplitude requires particular care.  A domain-wide
normalization is
\begin{equation}
\overline{\widehat U}^{(k)}_{\Omega}
 :=\frac{
\sum_{K'}\sum_{\boldsymbol\ell\in K'}
\mu_{K',\boldsymbol\ell}\widehat U_{K',\boldsymbol\ell}^{(k)}}
 {
\sum_{K'}\sum_{\boldsymbol\ell\in K'}
\mu_{K',\boldsymbol\ell}},
\qquad
\Delta_k^{\Omega}
:=\max_{K',\,\boldsymbol\ell\in K'}
\left|
\widehat U_{K',\boldsymbol\ell}^{(k)}
-\overline{\widehat U}^{(k)}_{\Omega}
\right|.
\label{eq:oe-global-reference-amplitude}
\end{equation}
Although $\Delta_k^{\Omega}$ makes the indicator dimensionless, it couples every
element to the largest excursion anywhere in the domain.  This is problematic
in black-hole accretion, where conservative components may vary by several
orders of magnitude between the near-horizon region and the outer flow.  A
large near-hole excursion can therefore make normalized jumps elsewhere
artificially small.

To localize the normalization without using a purely elementwise amplitude,
we introduce a cross-line scale on the structured tensor-product meshes used
here.  For $I\in\{1,2\}$, let $\mathcal X_I(K)$ be the coordinate line through
$K$ that varies in direction $I$ and is fixed in the transverse direction.
For fixed $I$ and $K$, abbreviate this line by
$\mathcal X=\mathcal X_I(K)$, and let
$\overline{\widehat U}^{(k)}_{\mathcal X}$ denote the corresponding weighted
line mean, obtained by restricting both sums in the first definition of
\eqref{eq:oe-global-reference-amplitude} to $K'\in\mathcal X$.  The raw line
amplitude and the denominator used in~\eqref{eq:oe-directional-indicator} are
\begin{equation}
\widetilde\Delta_{k,I,K}
:=\max_{\substack{K'\in\mathcal X\\
\boldsymbol\ell\in K'}}
\left|
\widehat U_{K',\boldsymbol\ell}^{(k)}
-\overline{\widehat U}^{(k)}_{\mathcal X}
\right|,
\qquad
\Delta_{k,I,K}
:=\max\left\{
\widetilde\Delta_{k,I,K},10^{-6}\Delta_k^{\Omega}
\right\}.
\label{eq:oe-cross-line-normalization}
\end{equation}
The implementation screens noise-level amplitudes before evaluating
\eqref{eq:oe-directional-indicator}.  Set
$U_{\max}:=\max_{K',\,\boldsymbol\ell\in K'}
\|\widehat U_{K',\boldsymbol\ell}\|_2$,
$\tau_{\rm cmp}:=\max\{10^{-8}U_{\max},
10^3\epsilon_{\rm mach}\}$, and
$\tau_{\rm abs}:=10^3\epsilon_{\rm mach}$, where
$\epsilon_{\rm mach}$ is machine precision.  Component $k$ contributes only
when $\Delta_k^{\Omega}>\tau_{\rm cmp}$; among retained components, a
component--direction pair is omitted whenever
$\Delta_{k,I,K}\le\tau_{\rm abs}$.  These safeguards prevent division by a
noise-level amplitude: the global test screens numerically constant
components, while the floor in~\eqref{eq:oe-cross-line-normalization} protects
a nearly constant coordinate line.  The element-attached cross-line scale
$\Delta_{k,K}:=(\Delta_{k,1,K},\Delta_{k,2,K})$ retains separate
normalizations for the two coordinate directions.  In Cartesian coordinates,
the $x$-normal jump uses the fixed-$y$ line through $K$, whereas the
$y$-normal jump uses the fixed-$x$ line.  In $(r,\theta)$ or
$(r,\widetilde\phi)$ coordinates, these become fixed-angle radial lines and
constant-$r$ angular lines.  A large radial variation therefore does not mask
an angular discontinuity, or vice versa, and the construction is symmetric
under coordinate interchange. Figure~\ref{fig:oe-cross-line-scaling} contrasts the domain-wide and
direction-matched normalizations.
\begin{figure}[t]
  \centering
  \resizebox{0.88\linewidth}{!}{%
    \input{figures/oe_cross_line_scaling.tex}%
  }
  \caption{Schematic of the normalization used by the directional OE
  indicator.  (a) A domain-wide scale $\Delta_k^{\Omega}$ is shared by every
  element, so a large excursion far from $K$ can control its normalization.
  (b) Cross-line scaling assigns to $K$ a direction-matched pair:
  $\Delta_{k,1,K}$ is computed along the fixed-$y$ line $\mathcal X_1(K)$,
  whereas $\Delta_{k,2,K}$ is computed along the fixed-$x$ line
  $\mathcal X_2(K)$.}
  \label{fig:oe-cross-line-scaling}
\end{figure}

With this choice, the indicator is invariant under a componentwise affine
rescaling of $\widehat U$.  Hence $\sigma_{m,I}^{(K)}$ is dimensionless and
$\delta_m^{(K)}$ has units of inverse time.  We refer to this construction as
cross-line scaling and use it in all OE calculations reported in
Section~\ref{sec:numerics}.

\subsubsection{Conservative OE map}

Because $\mathbb W_h=\chi\widehat U_h$, preserving the ordinary mean of
$\widehat U_h$ is insufficient when $\chi$ varies on $K$. We therefore
introduce the $\chi$-weighted mean
\begin{equation}
\langle Z\rangle_{\chi,K}
:=\frac{\displaystyle
\sum_{\boldsymbol\ell}\mu_{K,\boldsymbol\ell}
\chi_{\boldsymbol\ell}Z_{\boldsymbol\ell}}
{\displaystyle
\sum_{\boldsymbol\ell}\mu_{K,\boldsymbol\ell}
\chi_{\boldsymbol\ell}},
\qquad
\widehat U_{A,K}
:=\langle\widehat U_h\rangle_{\chi,K}.
\label{eq:oe-weighted-anchor}
\end{equation}
The resulting $\widehat U_{A,K}$ is simply the PCP
anchor~\eqref{eq:pcp-weight-anchor}. Let
$P_{\chi,K}^r:\mathbb Q_N(K)\to\mathbb Q_r(K)$ denote the componentwise
$\chi$-weighted GLL projection. Its hierarchical increments are defined by
\begin{equation}
\begin{aligned}
\sum_{\boldsymbol\ell}\mu_{K,\boldsymbol\ell}\chi_{\boldsymbol\ell}
\left[(P_{\chi,K}^r Z)_{\boldsymbol\ell}-Z_{\boldsymbol\ell}\right]
V_{\boldsymbol\ell}
&=0
\quad\forall V\in\mathbb Q_r(K),\\
\widetilde{\mathcal S}_K^r
&:=P_{\chi,K}^r-P_{\chi,K}^{r-1},
\qquad r=1,\ldots,N.
\end{aligned}
\label{eq:oe-weighted-shell}
\end{equation}
Because $\mathbb Q_{r-1}(K)\subset\mathbb Q_r(K)$ and contains the constants,
each increment has zero $\chi$-weighted mean, and
\begin{equation}
\widehat U_h
=\widehat U_{A,K}
+\sum_{r=1}^N
\widetilde{\mathcal S}_K^r\widehat U_h.
\label{eq:oe-weighted-shell-decomposition}
\end{equation}

The degree-$r$ increment is retained by the damping factor
\begin{equation}
\alpha_{r,K}
:=\exp\left[
-s_{\mathrm{OE}}\Delta t\sum_{m=0}^{r}\delta_m^{(K)}
\right],
\qquad r=1,\ldots,N,
\label{eq:oe-shell-retention}
\end{equation}
where the dimensionless parameter $s_{\mathrm{OE}}\ge0$ controls the overall
damping strength. Setting $s_{\mathrm{OE}}=0$ gives the identity map, while
larger values strengthen the damping at fixed indicator and time step.  Its
choice is therefore central to the practical performance of the OE procedure.
We expect its calibration to depend on the polynomial degree.  All numerical
experiments in this work use $\mathbb Q_2$ elements; for this degree, values in
the range $0.01\le s_{\mathrm{OE}}\le0.05$ give robust performance across the
full test suite.  Unless stated otherwise, we use $s_{\mathrm{OE}}=0.02$
throughout.  The accuracy and shock--bubble studies in
Section~\ref{sec:numerics} examine the sensitivity to $s_{\mathrm{OE}}$ and
support $s_{\mathrm{OE}}=0.02$ as a reasonable default for the $\mathbb Q_2$
calculations considered here. The elementwise conservative
OE map is
\begin{equation}
(\mathcal O_K^{\rm cons}\mathbb W_h)_{\boldsymbol\ell}
:=\chi_{\boldsymbol\ell}
\left[
\widehat U_{A,K}
+\sum_{r=1}^N
\alpha_{r,K}
\left(
\widetilde{\mathcal S}_K^r\widehat U_h
\right)_{\boldsymbol\ell}
\right].
\label{eq:oe-conservative-mapback}
\end{equation}
The zero-mean property of every damped increment yields
\begin{equation}
\sum_{\boldsymbol\ell}\mu_{K,\boldsymbol\ell}
(\mathcal O_K^{\rm cons}\mathbb W_h)_{\boldsymbol\ell}
=
\sum_{\boldsymbol\ell}\mu_{K,\boldsymbol\ell}
\mathbb W_{\boldsymbol\ell}.
\label{eq:oe-weighted-conservation}
\end{equation}
Thus the stored conservative cell average is preserved exactly. The treatment
of axis cells, for which $\chi$ vanishes on the axis face, is described
in~\ref{app:simple-zero-treatment}. Applying
$\mathcal O_K^{\rm cons}$ elementwise defines the global conservative OE
operator $\mathcal O_h^{\rm cons}$.


\subsection{PCP stage stabilization}
\label{sec:algorithm-pcp-stage}

After conservative OE, we apply the three-step weight-compatible scaling of
Section~\ref{sec:pcp-weighted-scaling} to the same regular local orthonormal
state. With the anchor \eqref{eq:pcp-weight-anchor}, it enforces the density,
cone, and relative cone-margin constraints while preserving the conservative
element average. Denoting this limiter by $\mathcal L_h^{\rm PCP}$, the
complete stage stabilization is
\begin{equation}
\boxed{
\mathcal P_h
=\mathcal L_h^{\rm PCP}\circ\mathcal O_h^{\rm cons}:
\quad
\text{conservative OE}
\;\longrightarrow\;
\text{PCP scaling}.}
\label{eq:stage-order}
\end{equation}
Here $\mathcal O_h^{\rm cons}$ is the identity when OE is disabled. Both
operators act through $\widehat U$ and preserve the stored conservative cell
average of $\mathbb W$.

\subsection{SSPRK$(3,3)$ stage certification and retry}
\label{sec:algorithm-ssprk}

Let $\mathbb W^n$ be the last accepted solution. The implementation evaluates
SSPRK$(3,3)$ with the OE--PCP stage map $\mathcal P_h$ from
\eqref{eq:stage-order}:
\begin{subequations}
\label{eq:ssprk33}
\begin{align}
 \mathbb W^{(1)}
 &=\mathcal P_h\left(
 \mathbb W^n+\Delta t\,\mathcal R_h(\mathbb W^n)\right),
 \label{eq:ssprk33-stage1}\\
 \mathbb W^{(2)}
 &=\mathcal P_h\left(
 \frac34\mathbb W^n+\frac14\mathbb W^{(1)}
 +\frac{\Delta t}{4}\mathcal R_h(\mathbb W^{(1)})\right),
 \label{eq:ssprk33-stage2}\\
 \mathbb W^{n+1}
 &=\mathcal P_h\left(
 \frac13\mathbb W^n+\frac23\mathbb W^{(2)}
 +\frac{2\Delta t}{3}\mathcal R_h(\mathbb W^{(2)})\right).
 \label{eq:ssprk33-stage3}
\end{align}
\end{subequations}
Before applying $\mathcal P_h$, each raw stage candidate is certified by
requiring the conservative cell average of every element to lie in $\Gset$,
equivalently $(\overline{\mathbb W}_K)_D>0$ and
$q(\overline{\mathbb W}_K)>0$. If certification fails, the whole step is
rejected and retried according to Section~\ref{sec:pcp-stage-rejection}.

\subsection{Causal timestep}
\label{sec:algorithm-time}

The explicit timestep uses the element causal rate $\lambda_K^{\rm PCP}$ from
\eqref{eq:oe-lambda}:
\begin{equation}
 \Delta t
 =\frac{\mathrm{CFL}}
 {\displaystyle\max_K\lambda_K^{\rm PCP}}.
 \label{eq:algorithm-time-step}
\end{equation}
Because the prescribed spacetime and mesh are stationary, the geometry-only
rates $\lambda_K^{\rm PCP}$ are precomputed and reused during time integration.

The sufficient forward-Euler condition \eqref{eq:nodal-cfl} also contains a
source escape rate, which is not included in the nominal hyperbolic timestep
\eqref{eq:algorithm-time-step}. When the source restriction is more severe,
stage-average certification and whole-step retry provide the runtime safeguard
described in Section~\ref{sec:pcp-stage-rejection}.

In summary, each residual evaluation uses the EC volume flux, matched geometry
source, and classical LLF face flux stabilized by the causal $a^{\rm PCP}$.
The same causal speed field supplies the explicit CFL and OE propagation
scale. Each raw SSP stage candidate is certified and then processed by
conservative OE followed by PCP scaling; failed certification
triggers a whole-step retry. This is the fully discrete algorithm used in the
numerical experiments.

%% file: figures/oe_cross_line_scaling.tex
\begin{tikzpicture}[
  x=0.86cm,
  y=0.86cm,
  every node/.style={font=\small},
  panel/.style={draw=black!70, line width=0.8pt},
  mesh/.style={draw=black!28, line width=0.35pt},
  guide/.style={line width=1.5pt},
  >=Stealth
]
  \definecolor{oeBlue}{RGB}{0,114,178}
  \definecolor{oeOrange}{RGB}{230,159,0}
  \definecolor{oeVermillion}{RGB}{213,94,0}

  \begin{scope}
    \fill[black!3] (0,0) rectangle (7,5);
    \fill[oeBlue!18] (1,2) rectangle (2,3);
    \fill[oeVermillion!70] (5,4) rectangle (6,5);
    \draw[mesh,step=1] (0,0) grid (7,5);
    \draw[panel] (0,0) rectangle (7,5);

    \node[draw=black!65, fill=white, inner sep=2pt, anchor=north west]
      at (0.16,4.84) {(a)};
    \node[font=\normalsize] at (3.5,5.42) {Domain-wide scaling};
    \node[font=\normalsize] at (1.5,2.5) {$K$};
    \node[align=center, anchor=south] (excursion) at (5.5,3.25)
      {largest excursion};
    \draw[-{Stealth[length=2.2mm]}, oeVermillion, line width=0.9pt]
      (excursion.north) -- (5.5,3.96);
    \draw[-{Stealth[length=2.2mm]}, dashed, black!65, line width=0.8pt]
      (5.15,3.92) to[bend right=18]
      node[pos=0.52, above, fill=white, inner sep=1pt] {$\Delta_k^{\Omega}$}
      (1.92,3.02);
    \node[anchor=north, align=center] at (3.5,-0.28)
      {one scale shared by all elements};
  \end{scope}

  \begin{scope}[xshift=9.1cm]
    \fill[black!3] (0,0) rectangle (7,5);
    \fill[oeBlue!18] (0,2) rectangle (7,3);
    \fill[oeOrange!24] (2,0) rectangle (3,5);
    \fill[black!72] (2,2) rectangle (3,3);
    \draw[mesh,step=1] (0,0) grid (7,5);
    \draw[panel] (0,0) rectangle (7,5);

    \draw[guide,oeBlue] (0,2.5) -- (7,2.5);
    \draw[guide,oeOrange,dashed] (2.5,0) -- (2.5,5);
    \node[draw=black!65, fill=white, inner sep=2pt, anchor=north west]
      at (0.16,4.84) {(b)};
    \node[font=\normalsize] at (3.5,5.42) {Cross-line scaling};
    \node[text=white,font=\normalsize] at (2.5,2.5) {$K$};

    \node[anchor=south, fill=white, fill opacity=0.82, text opacity=1,
          inner sep=1.5pt, text=oeBlue!80!black] at (4.75,2.56)
      {$\mathcal X_1(K)$};
    \node[rotate=90, anchor=center, fill=white, fill opacity=0.82,
          text opacity=1, inner sep=1.5pt, text=oeOrange!70!black]
      at (2.22,3.75) {$\mathcal X_2(K)$};
    \node[anchor=north, align=center] at (3.5,-0.28)
      {direction-matched scales through $K$};
  \end{scope}
\end{tikzpicture}

%% file: sections/07_numerics.tex
\section{Numerical experiments}
\label{sec:numerics}

We assess the fully discrete scheme of Section~\ref{sec:algorithm} through a sequence of increasingly demanding tests, ranging from smooth special-relativistic flows to multidimensional shocks and black-hole accretion. The experiments are organized to examine, in turn, high-order accuracy in smooth regimes, robustness for strongly nonsmooth relativistic flows, axisymmetric discretizations,
preservation of an exact stationary solution in Schwarzschild spacetime,
and the ability to obtain physically relevant steady accretion states in Schwarzschild and Kerr geometries.

Unless otherwise stated, all calculations use the fully discrete scheme~\eqref{eq:ssprk33} with conforming affine quadrilateral meshes, $\mathbb Q_2$ polynomial approximations, the classical LLF interface flux with the geometry-only causal/PCP dissipation speed, and $\mathrm{CFL}=0.8$. The PCP limiter is enabled in all calculations. For nonsmooth tests, OE damping is applied with $s_{\mathrm{OE}}=0.02$ unless otherwise stated. The two smooth accuracy studies compare the undamped scheme with $s_{\mathrm{OE}}=0.01$, $0.02$, and $0.05$; the shock--bubble study uses the same three nonzero values at fixed resolution.

Table~\ref{tab:numerics-summary} summarizes the principal numerical configurations,
where $w$ denotes the geometric weight entering the weighted conservative formulation.
Case-specific initial and boundary data are given in the corresponding subsections. The implementation is written in C++ using the high-performance MFEM finite element library~\cite{AndersonEtAl2021}.

\begin{table}[t]
     \centering
     \footnotesize
     \caption{Principal numerical configurations used in the numerical experiments.
     PCP is enabled in all calculations; the OE column indicates the damping
     configurations reported.}
     \label{tab:numerics-summary}
     \begin{tabular}{@{}llllll@{}}
     \toprule
     Test & $w$ & Degree and mesh & $\mathrm{CFL}$ & $s_{\mathrm{OE}}$ & Final time \\
     \midrule
     Smooth wave
     & $1$
     & $\mathbb{Q}_2$, $16^2$--$128^2$
     & $0.8$
     & vary
     & $0.1$ \\

     Riemann problems
     & $1$
     & $\mathbb{Q}_2$, $400^2$
     & $0.8$
     & $0.02$
     & $0.4$ \\

     Shock--bubble
     & $1$
     & $\mathbb{Q}_2$, $650\times180$
     & $0.8$
     & vary
     & $450/500$ \\

     C2 jet
     & $r$
     & $\mathbb{Q}_2$, $360\times1080$
     & $0.8$
     & $0.02$
     & $100$ \\

     Michel
     & $r\sin\theta$
     & $\mathbb{Q}_2$, $16\times4$--$128\times4$
     & $0.8$
     & vary
     & $2M$ \\

     Schwarzschild M1a
     & $r\sin\theta$
     & $\mathbb{Q}_2$, $256\times128$
     & $0.8$
     & $0.02$
     & $500M$ \\

     Kerr, four spins
     & $r$
     & $\mathbb{Q}_2$, $200\times160$
     & $0.8$
     & $0.02$
     & $500M$ \\
     \bottomrule
     \end{tabular}
     \end{table}

     \subsection{Two-dimensional smooth-wave accuracy}
     \label{sec:numerics-smooth-wave}

     We first assess the accuracy of the fully discrete method for the planar
     Cartesian SRHD system~\eqref{eq:planar-srhd-balance}. On the periodic unit
     square $\Omega=[0,1]^2$, we consider the smooth traveling wave
     \begin{equation}
     \begin{aligned}
     \rho(x,y,t)
     &=1+0.2\sin\!\left(2\pi\big[(x-v_1t)+(y-v_2t)\big]\right),\\
     (v_1,v_2)
     &=\frac{0.99}{\sqrt{2}}(1,1),
     \qquad p=1,
     \end{aligned}
     \label{eq:numerics-smooth-wave}
     \end{equation}
     with $\Gamma=5/3$. Thus the pressure and velocity remain constant, while the
     density profile is advected with speed $|v|=0.99$.

     For $q=1,2,\infty$, we measure the relative density error
     \begin{equation}
     E_q(\rho)=
     \frac{\|\rho_h-\rho_{\rm ex}\|_{L^q(\dd V)}}
          {\|\rho_{\rm ex}\|_{L^q(\dd V)}},
     \label{eq:numerics-density-errors}
     \end{equation}
     where $\rho_{\rm ex}$ denotes the exact density and
     $\dd V=\dd x\,\dd y$ is the Cartesian volume element.
     We use $\mathbb{Q}_2$ approximations on uniform $16^2$, $32^2$, $64^2$, and
     $128^2$ meshes, with $\mathrm{CFL}=0.8$, and evolve to $t=0.1$.
     Table~\ref{tab:smooth-wave-oe-strength} reports the convergence histories
     without OE and with $s_{\mathrm{OE}}=0.01$, $0.02$, and $0.05$.

     \begin{table}[t]
     \centering
     \scriptsize
     \caption{Relative density errors and observed convergence orders for the
     $\mathbb{Q}_2$ smooth-wave problem at different OE strengths
     $s_{\mathrm{OE}}$.}
     \label{tab:smooth-wave-oe-strength}
     \begin{tabular}{ccrrrrrr}
     \toprule
     $s_{\mathrm{OE}}$ & Mesh & $E_1(\rho)$ & Order & $E_2(\rho)$ & Order &
     $E_\infty(\rho)$ & Order \\
     \midrule
     $0$ (off) & $16^2$  & $2.198\!\times\!10^{-4}$ & --    & $2.809\!\times\!10^{-4}$ & --    & $1.000\!\times\!10^{-3}$ & -- \\
               & $32^2$  & $2.782\!\times\!10^{-5}$ & 2.982 & $3.584\!\times\!10^{-5}$ & 2.971 & $1.264\!\times\!10^{-4}$ & 2.984 \\
               & $64^2$  & $3.503\!\times\!10^{-6}$ & 2.990 & $4.526\!\times\!10^{-6}$ & 2.985 & $1.599\!\times\!10^{-5}$ & 2.983 \\
               & $128^2$ & $4.385\!\times\!10^{-7}$ & 2.998 & $5.673\!\times\!10^{-7}$ & 2.996 & $1.999\!\times\!10^{-6}$ & 2.999 \\
     \midrule
     $0.01$    & $16^2$  & $2.760\!\times\!10^{-4}$ & --    & $3.545\!\times\!10^{-4}$ & --    & $1.187\!\times\!10^{-3}$ & -- \\
               & $32^2$  & $3.124\!\times\!10^{-5}$ & 3.143 & $4.064\!\times\!10^{-5}$ & 3.125 & $1.412\!\times\!10^{-4}$ & 3.071 \\
               & $64^2$  & $3.686\!\times\!10^{-6}$ & 3.083 & $4.808\!\times\!10^{-6}$ & 3.080 & $1.692\!\times\!10^{-5}$ & 3.061 \\
               & $128^2$ & $4.494\!\times\!10^{-7}$ & 3.036 & $5.841\!\times\!10^{-7}$ & 3.041 & $2.057\!\times\!10^{-6}$ & 3.040 \\
     \midrule
     $0.02$    & $16^2$  & $3.728\!\times\!10^{-4}$ & --    & $4.775\!\times\!10^{-4}$ & --    & $1.453\!\times\!10^{-3}$ & -- \\
               & $32^2$  & $3.684\!\times\!10^{-5}$ & 3.339 & $4.795\!\times\!10^{-5}$ & 3.316 & $1.625\!\times\!10^{-4}$ & 3.161 \\
               & $64^2$  & $3.939\!\times\!10^{-6}$ & 3.225 & $5.157\!\times\!10^{-6}$ & 3.217 & $1.803\!\times\!10^{-5}$ & 3.172 \\
               & $128^2$ & $4.612\!\times\!10^{-7}$ & 3.094 & $6.029\!\times\!10^{-7}$ & 3.097 & $2.121\!\times\!10^{-6}$ & 3.087 \\
     \midrule
     $0.05$    & $16^2$  & $1.085\!\times\!10^{-3}$ & --    & $1.411\!\times\!10^{-3}$ & --    & $4.226\!\times\!10^{-3}$ & -- \\
               & $32^2$  & $7.809\!\times\!10^{-5}$ & 3.796 & $1.058\!\times\!10^{-4}$ & 3.738 & $3.273\!\times\!10^{-4}$ & 3.691 \\
               & $64^2$  & $5.197\!\times\!10^{-6}$ & 3.910 & $6.864\!\times\!10^{-6}$ & 3.946 & $2.330\!\times\!10^{-5}$ & 3.812 \\
               & $128^2$ & $5.132\!\times\!10^{-7}$ & 3.340 & $6.738\!\times\!10^{-7}$ & 3.349 & $2.355\!\times\!10^{-6}$ & 3.306 \\
     \bottomrule
     \end{tabular}
     \end{table}

     Without OE, the scheme exhibits the expected third-order convergence in
     all three norms.  The damped calculations show the same behavior: on the
     finest refinement, the observed orders range from $3.04$ to $3.34$ in
     $L^1$, from $3.04$ to $3.35$ in $L^2$, and from $3.04$ to $3.31$ in
     $L^\infty$.  The expected third-order accuracy of the $\mathbb Q_2$
     discretization is therefore retained for every tested damping strength.

     At finite resolution, OE introduces additional dissipation into this
     smooth traveling wave, with a larger effect at stronger damping.  On the
     $16^2$ mesh, $s_{\mathrm{OE}}=0.01$, $0.02$, and $0.05$ increase the $L^1$
     error relative to the undamped value by factors of approximately $1.26$,
     $1.70$, and $4.93$, respectively.  This influence diminishes rapidly under
     refinement: on the $128^2$ mesh, the corresponding factors are $1.02$,
     $1.05$, and $1.17$.  In particular,
     $s_{\mathrm{OE}}=0.02$ adds only about $5$--$6\%$ to the three density
     errors on the finest mesh while retaining third-order convergence.

\subsection{Two-dimensional Riemann problems}
\label{sec:numerics-riemann}

We next consider three four-state Riemann problems reported by Cao, Peng, and
Wu~\cite{CaoPengWu2025} for the Cartesian SRHD system
\eqref{eq:planar-srhd-balance}.  The initial discontinuities at $x=0.5$ and
$y=0.5$ partition the domain into four quadrants.  We impose outflow boundary
conditions and set $\Gamma=5/3$.  The primitive state is denoted by
$Q=(\rho,v_x,v_y,p)^T$, where $v_x$ and $v_y$ are the Cartesian velocity components.
The first problem is initialized by
\begin{equation}
 Q(x,y,0)=
 \begin{cases}
 (0.5,0.5,-0.5,5)^T, & x>0.5,\ y>0.5,\\
 (1,0.5,0.5,5)^T, & x<0.5,\ y>0.5,\\
 (3,-0.5,0.5,5)^T, & x<0.5,\ y<0.5,\\
 (1.5,-0.5,-0.5,5)^T, & x>0.5,\ y<0.5.
 \end{cases}
 \label{eq:numerics-riemann-a}
\end{equation}
This configuration consists of four interacting contact discontinuities
(vortex sheets).  The second problem is initialized by
\begin{equation}
 Q(x,y,0)=
 \begin{cases}
 (0.1,0,0,0.01)^T, & x>0.5,\ y>0.5,\\
 (0.1,0.99,0,1)^T, & x<0.5,\ y>0.5,\\
 (0.5,0,0,1)^T, & x<0.5,\ y<0.5,\\
 (0.1,0,0.99,1)^T, & x>0.5,\ y<0.5.
 \end{cases}
 \label{eq:numerics-riemann-b}
\end{equation}
The third problem is initialized by
\begin{equation}
 Q(x,y,0)=
 \begin{cases}
 (0.1,0,0,20)^T, & x>0.5,\ y>0.5,\\
 (\rho_{\rm R},v_{\rm R},0,0.05)^T,
 & x<0.5,\ y>0.5,\\
 (0.01,0,0,0.05)^T, & x<0.5,\ y<0.5,\\
 (\rho_{\rm R},0,v_{\rm R},0.05)^T,
 & x>0.5,\ y<0.5,
 \end{cases}
 \label{eq:numerics-riemann-c}
\end{equation}
where $\rho_{\rm R}=0.00414329639576$ and
$v_{\rm R}=0.9946418833556542$.
The latter two problems contain ultra-relativistic states and provide stringent
tests of the PCP limiter.

All three problems are computed on $\Omega=[0,1]^2$ using $\mathbb Q_2$
elements on $400\times400$ uniform meshes with ${\rm CFL}=0.8$ and
$s_{\mathrm{OE}}=0.02$, and are evolved to $t=0.4$.
Figure~\ref{fig:riemann-three} shows 30 contours of $\ln\rho$.  In the first
problem, the four vortex sheets roll up into a pronounced spiral around a
low-density core.  The latter two problems develop strongly interacting
curved shocks and contacts from the ultra-relativistic quadrant states.  In
all three cases, the scheme preserves admissibility while resolving the
principal wave structures.  The spiral roll-up in the first problem and the
curved wave patterns in the latter two agree well with the corresponding
results in~\cite{CaoPengWu2025}.

\begin{figure}[t]
 \centering
 \includegraphics[width=\textwidth]
 {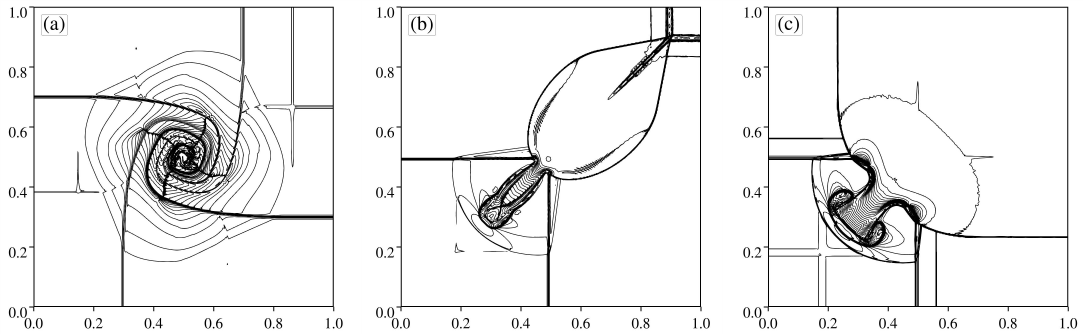}
 \caption{Contours of $\ln\rho$ at $t=0.4$ for the three two-dimensional
 Riemann problems.  Panels (a)--(c) correspond to
 \eqref{eq:numerics-riemann-a}--\eqref{eq:numerics-riemann-c}, respectively.
 Each panel uses 30 uniformly spaced contour levels over the ranges
 $[-4.68,1.19]$, $[-6.0,2.0]$, and $[-8.4,-2.2]$, respectively.  All cases use
 $\mathbb Q_2$ elements on $400\times400$ uniform meshes
 with ${\rm CFL}=0.8$ and $s_{\mathrm{OE}}=0.02$.}
 \label{fig:riemann-three}
\end{figure}

\subsection{Shock--bubble interaction}
\label{sec:numerics-shock-bubble}

The final Cartesian SRHD benchmark considers the interaction of a left-moving
planar shock with either a light or a heavy circular density bubble, following
Duan and Tang~\cite{DuanTang2019}.  The computational domain is
$\Omega=[0,325]\times[-45,45]$, and the shock is initially located at $x=265$.

\begin{figure}[t]
 \centering
 \includegraphics[width=\textwidth]
 {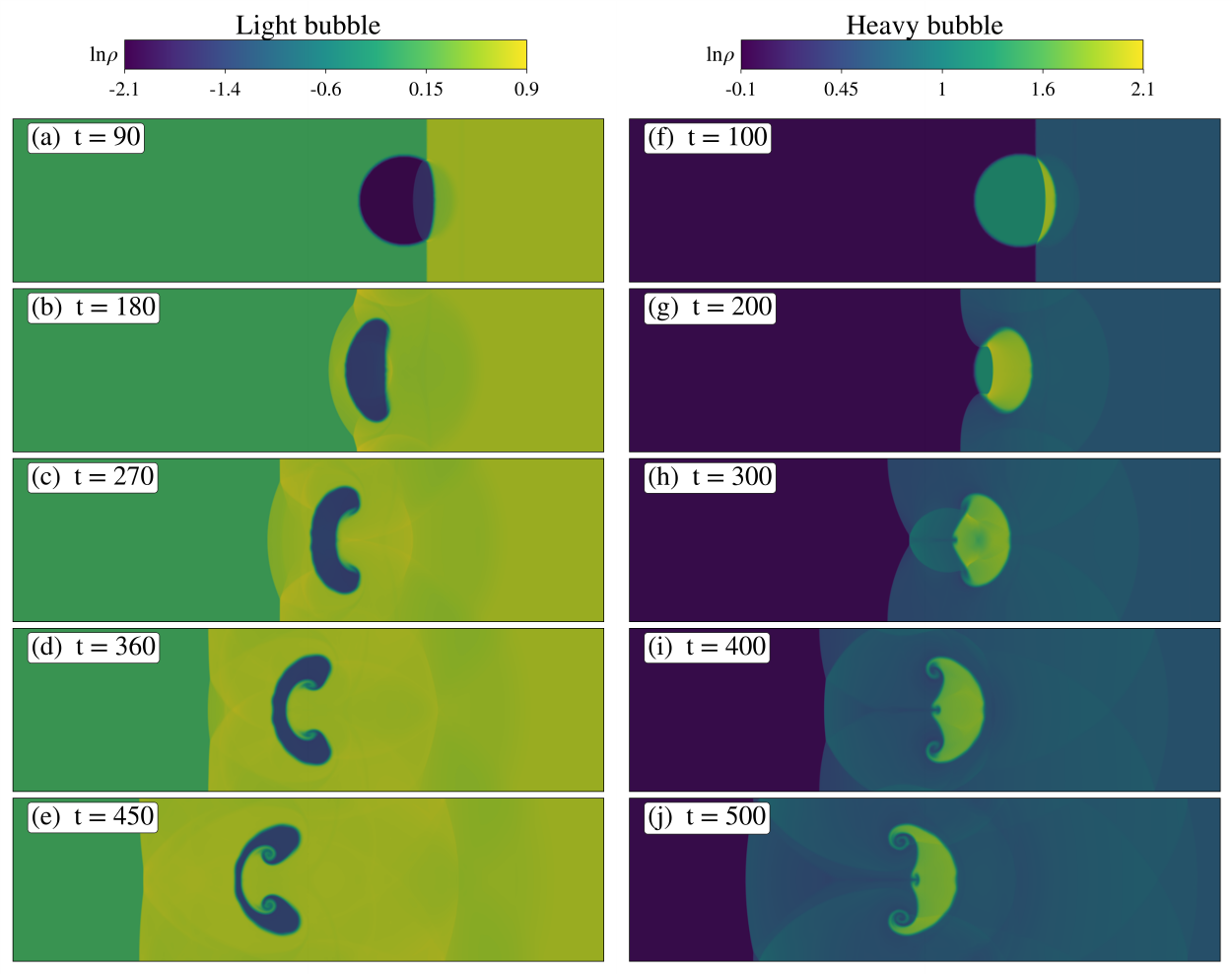}
 \caption{Log-density evolution for the light-bubble problem (panels
 (a)--(e), left column, $t=90,180,270,360,450$) and heavy-bubble problem
 (panels (f)--(j), right column, $t=100,200,300,400,500$), ordered from top
 to bottom.  Both computations use
 $\mathbb Q_2$ elements on $650\times180$ uniform meshes with ${\rm CFL}=0.8$
 and $s_{\mathrm{OE}}=0.02$.
 Independent color scales are shown because the two cases have different
 density ranges.}
 \label{fig:shock-bubble}
\end{figure}

Away from the bubble, the post-shock state is
\[
 Q_{\rm post}
 =(1.865225080631180,-0.196781107378299,0,0.15)^T.
\]
The initial primitive field is
\begin{equation}
 Q(x,y,0)=
 \begin{cases}
 (1,0,0,0.05)^T, & x<265,\\
 Q_{\rm post}, & x>265.
 \end{cases}
 \label{eq:numerics-shock-bubble-shock}
\end{equation}
A circular bubble of radius $25$ centered at $(215,0)$ replaces the ambient
state inside
\begin{equation}
 (x-215)^2+y^2\le25^2.
 \label{eq:numerics-shock-bubble-circle}
\end{equation}
We consider both a light and a heavy bubble,
\begin{equation}
 Q_{\rm bubble}=
 \begin{cases}
 (0.1358,0,0,0.05)^T, & \text{light bubble},\\
 (3.1538,0,0,0.05)^T, & \text{heavy bubble}.
 \end{cases}
 \label{eq:numerics-shock-bubble-states}
\end{equation}
The adiabatic index is $\Gamma=5/3$.  Reflecting conditions are imposed at
$y=\pm45$, the post-shock state is prescribed at the right boundary, and a
transmissive outflow condition is used at the left boundary.

Both cases are computed with $\mathbb Q_2$ elements on a $650\times180$
uniform mesh and ${\rm CFL}=0.8$.  We adopt $s_{\mathrm{OE}}=0.02$ for the
time sequences shown in Figure~\ref{fig:shock-bubble}.
As the shock crosses each bubble, transmitted and reflected waves form, while
the material interface is compressed and subsequently rolls up into vortical
structures.  At later times, both cases develop complex wakes and
smaller-scale waves, while the principal shocks and material interfaces remain
sharply resolved.  The evolution of both the light and heavy bubbles agrees
well with the corresponding results in~\cite{DuanTang2019}.

\FloatBarrier

To assess the finite-resolution sensitivity to the OE damping strength, we
also repeat the final-time calculations with $s_{\mathrm{OE}}=0.01$ and $0.05$
on the same mesh.  Figure~\ref{fig:shock-bubble-oe-strength} compares final-time enlargements for
both bubbles at $s_{\mathrm{OE}}=0.01$, $0.02$, and $0.05$.  The principal wake
geometry and the paired vortical structures remain consistent across the three
values.  Increasing $s_{\mathrm{OE}}$ progressively broadens the material
interfaces and smooths the tightly wound spiral tips.  This effect is most
apparent in the heavy-bubble case, where the smaller inner roll-up is weakened
at $s_{\mathrm{OE}}=0.05$.  Conversely, $s_{\mathrm{OE}}=0.01$ retains the
sharpest interfaces and the most fine-scale structure.
The intermediate value $s_{\mathrm{OE}}=0.02$ provides a balanced combination
of interface resolution and regularization.

\begin{figure}[!htb]
 \centering
 \includegraphics[width=0.9\textwidth]
 {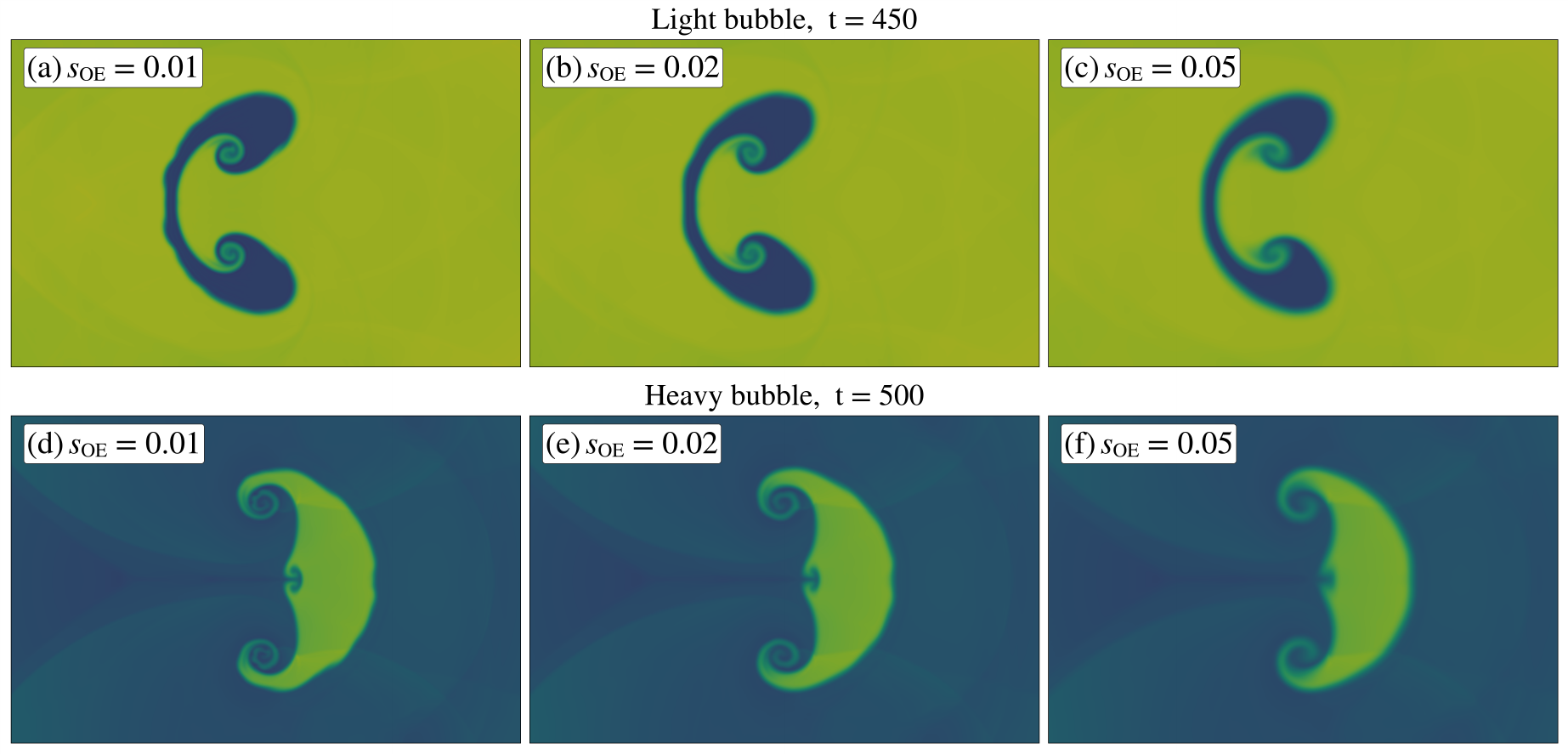}
 \caption{Effect of OE strength on the final-time shock--bubble solutions.
 The upper row shows the light bubble at $t=450$, and the lower row shows
 the heavy bubble at $t=500$.  All panels show $\ln\rho$ over the cropped
 window $[80,220]\times[-45,45]$.  The light- and
 heavy-bubble rows use the same Viridis mappings and ranges as
 Figure~\ref{fig:shock-bubble}, namely $[-2.1,0.9]$ and $[-0.1,2.1]$,
 respectively; the color bars are not repeated.  Each calculation uses
 $\mathbb Q_2$ elements on the same $650\times180$ uniform mesh with
 ${\rm CFL}=0.8$.}
 \label{fig:shock-bubble-oe-strength}
\end{figure}
\FloatBarrier

\subsection{Axisymmetric relativistic jet}
\label{sec:numerics-axisymmetric-jet}

We next consider the pressure-matched C2 axisymmetric ultra-relativistic jet
introduced by Mart\'i et al.~\cite{MartiEtAl1997}.  Its light beam,
relativistic inflow, strong shocks, and shear-driven instabilities make it a
demanding benchmark for the robustness and resolution of axisymmetric SRHD
schemes; see also \cite{WuTang2015,CaoPengWu2025}.  Figure~\ref{fig:axisymmetric-jet-c2}
shows the resulting evolution.

We solve the axisymmetric
SRHD system~\eqref{eq:axisymmetric-srhd-balance} in the meridional half-plane
with coordinates $(r,z)$.  The ambient and beam primitive states are
\begin{equation}
 V_{\rm amb}=(1,0,0,p_j)^T,
 \qquad
 V_{\rm beam}=(0.01,0,0.99,p_j)^T,
 \label{eq:numerics-axisymmetric-jet-states}
\end{equation}
where $V=(\rho,v_r,v_z,p)^T$.  For the C2 model,
$p_j=1.70305\times10^{-4}$, $\Gamma=5/3$, and
$\Omega=[0,15]\times[0,45]$.  At the lower boundary $z=0$, the beam state is
prescribed for $r\leq1$ and the ambient state for $r>1$.  We impose
axisymmetry at $r=0$, the ambient characteristic far-field condition at
$r=15$, and a transmissive condition at $z=45$.

The computation uses $\mathbb Q_2$ elements on a $360\times1080$ affine mesh
with ${\rm CFL}=0.8$ and $s_{\mathrm{OE}}=0.02$.  Because the physical
reduction weight $w=r$ vanishes
at the symmetry axis, cells adjacent to $r=0$ use the axis-cell treatment of
\ref{app:simple-zero-treatment}.

\begin{figure}[!t]
 \centering
 \includegraphics[width=0.91\textwidth]
 {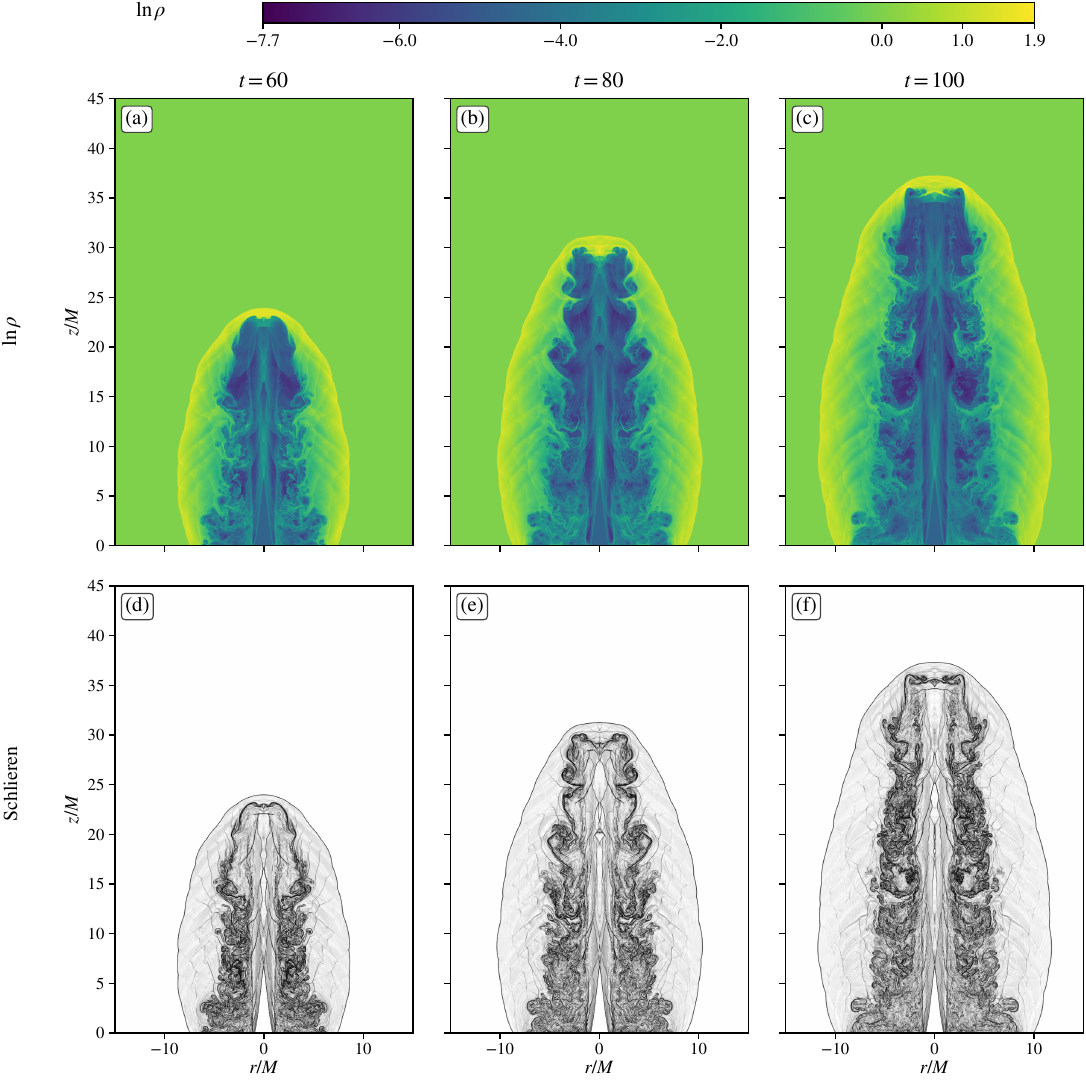}
 \caption{Evolution of the C2 axisymmetric relativistic jet at $t=60$, $80$,
 and $100$ (left to right).  Panels (a)--(c) show $\ln\rho$ using the common
 range $[-7.7,1.9]$ and the Viridis color map.  Panels (d)--(f) show the density schlieren
 diagnostic~\eqref{eq:numerics-axisymmetric-schlieren}.  The
 computation uses $\mathbb Q_2$ elements on a $360\times1080$ affine mesh with
 ${\rm CFL}=0.8$ and $s_{\mathrm{OE}}=0.02$.  The numerical half-plane is
 reflected across the axis for display.}
 \label{fig:axisymmetric-jet-c2}
\end{figure}

For visualization, the numerical half-plane is reflected across $r=0$.  The
upper row of Figure~\ref{fig:axisymmetric-jet-c2} shows $\ln\rho$ over the
fixed display range $[-7.7,1.9]$, while the lower row shows the normalized
density schlieren diagnostic
\begin{equation}
 \mathcal S_\rho
 =\exp\!\left(-\frac{10}{140}\left|\nabla\ln\rho\right|\right).
 \label{eq:numerics-axisymmetric-schlieren}
 \end{equation}
As the jet advances, a bow shock encloses an expanding cocoon, while
shear-driven vortical structures develop along the beam--cocoon interface.
Both diagnostics sharply resolve these features through $t=100$.  The
overall jet-head propagation, cocoon shape, and internal morphology agree
well with the original simulations of Mart\'i et al.~\cite{MartiEtAl1997}
and with subsequent high-order calculations~\cite{WuTang2015,CaoPengWu2025}.
\FloatBarrier

\subsection{Steady Michel accretion}
\label{sec:numerics-michel}

We next assess the accuracy of the Schwarzschild GRHD discretization using the
Michel solution for stationary, spherically symmetric transonic accretion
onto a Schwarzschild black hole~\cite{Michel1972,RadiceRezzolla2011}.  We
solve the Schwarzschild GRHD system~\eqref{eq:schwarzschild-bh-balance} in
the meridional half-plane.  The exact solution is smooth and purely radial throughout
the computational domain, providing a curved-spacetime test of the accuracy with which
the two-dimensional weighted formulation resolves a smooth stationary flow.

We set $M=1$ and $\Gamma=4/3$, choose the critical radius $r_c=8M$, and
normalize the positive rest-mass accretion rate to unity.  These choices
determine the physical transonic branch, which provides both the initial
condition on
$(r,\theta)\in[1.5M,11.5M]\times[0,\pi]$ and the boundary data at the two
radial boundaries.  The inner boundary at $r=1.5M$ lies inside the event
horizon at $r=2M$.  The physical reduction weight is $w=r\sin\theta$, and
cells adjacent to the polar axes are treated as described in
\ref{app:simple-zero-treatment}.

To quantify the departure from the steady state, we initialize the nodal
solution with the exact Michel profile and evolve to $t=2M$ using
$\mathbb{Q}_2$ elements with $\mathrm{CFL}=0.8$.  The relative density errors
defined in~\eqref{eq:numerics-density-errors} are evaluated using the physical
volume element
\begin{equation}
\dd V
=
r^2\sqrt{1+\frac{2M}{r}}\sin\theta\,\dd r\,\dd\theta.
\end{equation}
Since the exact solution is independent of $\theta$, we fix $N_\theta=4$
and refine only in the radial direction, from $N_r=16$ to $128$.
Table~\ref{tab:michel-oe-strength} reports the resulting convergence
histories without OE and with $s_{\mathrm{OE}}=0.01$, $0.02$, and $0.05$.

\begin{table}[t]
\centering
\scriptsize
\caption{Relative density errors and observed convergence orders for the
stationary Michel problem using $\mathbb{Q}_2$ elements at different OE
strengths $s_{\mathrm{OE}}$.}
\label{tab:michel-oe-strength}
\begin{tabular}{ccrrrrrr}
\toprule
$s_{\mathrm{OE}}$ & Mesh & $E_1(\rho)$ & Order & $E_2(\rho)$ & Order &
$E_\infty(\rho)$ & Order \\
\midrule
$0$ (off) & $16\times4$  & $2.734\!\times\!10^{-4}$ & --    & $1.434\!\times\!10^{-3}$ & --    & $4.615\!\times\!10^{-3}$ & -- \\
          & $32\times4$  & $3.745\!\times\!10^{-5}$ & 2.868 & $2.089\!\times\!10^{-4}$ & 2.779 & $8.373\!\times\!10^{-4}$ & 2.462 \\
          & $64\times4$  & $4.864\!\times\!10^{-6}$ & 2.945 & $2.831\!\times\!10^{-5}$ & 2.884 & $1.352\!\times\!10^{-4}$ & 2.631 \\
          & $128\times4$ & $6.284\!\times\!10^{-7}$ & 2.953 & $3.672\!\times\!10^{-6}$ & 2.946 & $2.032\!\times\!10^{-5}$ & 2.734 \\
\midrule
$0.01$    & $16\times4$  & $1.872\!\times\!10^{-4}$ & --    & $1.014\!\times\!10^{-3}$ & --    & $3.657\!\times\!10^{-3}$ & -- \\
          & $32\times4$  & $3.121\!\times\!10^{-5}$ & 2.584 & $1.750\!\times\!10^{-4}$ & 2.535 & $7.648\!\times\!10^{-4}$ & 2.258 \\
          & $64\times4$  & $4.496\!\times\!10^{-6}$ & 2.796 & $2.609\!\times\!10^{-5}$ & 2.746 & $1.305\!\times\!10^{-4}$ & 2.551 \\
          & $128\times4$ & $6.072\!\times\!10^{-7}$ & 2.888 & $3.537\!\times\!10^{-6}$ & 2.883 & $1.994\!\times\!10^{-5}$ & 2.710 \\
\midrule
$0.02$    & $16\times4$  & $1.342\!\times\!10^{-4}$ & --    & $7.293\!\times\!10^{-4}$ & --    & $2.834\!\times\!10^{-3}$ & -- \\
          & $32\times4$  & $2.579\!\times\!10^{-5}$ & 2.379 & $1.477\!\times\!10^{-4}$ & 2.304 & $6.952\!\times\!10^{-4}$ & 2.028 \\
          & $64\times4$  & $4.161\!\times\!10^{-6}$ & 2.632 & $2.419\!\times\!10^{-5}$ & 2.610 & $1.260\!\times\!10^{-4}$ & 2.464 \\
          & $128\times4$ & $5.872\!\times\!10^{-7}$ & 2.825 & $3.415\!\times\!10^{-6}$ & 2.825 & $1.958\!\times\!10^{-5}$ & 2.686 \\
\midrule
$0.05$    & $16\times4$  & $1.634\!\times\!10^{-4}$ & --    & $9.587\!\times\!10^{-4}$ & --    & $1.828\!\times\!10^{-3}$ & -- \\
          & $32\times4$  & $1.837\!\times\!10^{-5}$ & 3.153 & $1.041\!\times\!10^{-4}$ & 3.203 & $5.112\!\times\!10^{-4}$ & 1.839 \\
          & $64\times4$  & $3.315\!\times\!10^{-6}$ & 2.470 & $1.992\!\times\!10^{-5}$ & 2.386 & $1.132\!\times\!10^{-4}$ & 2.174 \\
          & $128\times4$ & $5.330\!\times\!10^{-7}$ & 2.637 & $3.113\!\times\!10^{-6}$ & 2.677 & $1.852\!\times\!10^{-5}$ & 2.612 \\
\bottomrule
\end{tabular}
\end{table}

Without OE, the $E_1$ and $E_2$ rates approach the expected third order,
while the $E_\infty$ rate reaches $2.73$ on the finest refinement.  In
contrast to the traveling-wave test, OE improves the Michel steady-state
accuracy: for every tested strength and mesh, all three errors are smaller
than their undamped counterparts.  By the finest mesh, however, all four
configurations yield comparable errors in each
norm, with a relative spread of only approximately $9$--$15\%$, indicating
reduced sensitivity to the damping strength under refinement.  The mechanism
responsible for this improvement is not investigated here.  Nevertheless,
within the tested range, OE retains high-order convergence while reducing the
departure from the stationary Michel solution.

\subsection{Axisymmetric Schwarzschild Bondi--Hoyle accretion}
\label{sec:numerics-schwarzschild-bondi-hoyle}

We next consider the M1a relativistic Bondi--Hoyle flow of
Lora-Clavijo and Guzm\'an~\cite{LoraClavijoGuzman2013}.  We solve the
axisymmetric Schwarzschild system~\eqref{eq:schwarzschild-bh-balance} in
horizon-penetrating coordinates with $M=1$, $\Gamma=4/3$, and the asymptotic
wind data
\begin{equation}
 \begin{aligned}
  \rho_\infty&=10^{-6},
  &c_{s,\infty}&=0.1,
  &v_\infty&=0.5,\\
  p_\infty&=7.7319587628866007\times10^{-9}.
 \end{aligned}
 \label{eq:numerics-m1a-asymptotic-state}
\end{equation}
The wind travels in the positive $z$ direction and has asymptotic Mach number
$\mathcal M_\infty=v_\infty/c_{s,\infty}=5$.  The associated accretion radius is
$r_{\rm acc}=M/(v_\infty^2+c_{s,\infty}^2)=3.8461538461538458$.
The computational domain in $(r,\theta)$ is
\begin{equation}
 (r,\theta)\in[1.5,10r_{\rm acc}]\times[0,\pi].
 \label{eq:numerics-m1a-domain}
\end{equation}
At the inner boundary, which lies inside the event horizon, we impose a
transmissive excision condition.  At the outer boundary, the asymptotic wind
is prescribed on the upstream portion, while a transmissive condition is
imposed downstream.  The physical reduction weight is $w=r\sin\theta$; cells
adjacent to the two polar axes use the axis-cell treatment of
\ref{app:simple-zero-treatment}.

The computation uses $\mathbb Q_2$ elements on a $256\times128$
uniform $(r,\theta)$ mesh with ${\rm CFL}=0.8$ and
$s_{\mathrm{OE}}=0.02$, and is evolved to $t=500M$.
Figure~\ref{fig:schwarzschild-m1a} displays $\ln\rho$ and the Lorentz factor
$L$ in the central $r\le20M$ region, using the meridional coordinates
$x=r\sin\theta$ and $z=r\cos\theta$.  At the final time, a well-defined
downstream shock cone extends from the horizon and contains gas substantially
denser than the ambient flow.  The Lorentz-factor field shows deceleration
behind the shock and localized acceleration near the horizon.  The resulting
shock-cone morphology agrees qualitatively with the M1a density distribution
reported in~\cite{LoraClavijoGuzman2013}.

\begin{figure}[!ht]
 \centering
 \includegraphics[width=\textwidth]
 {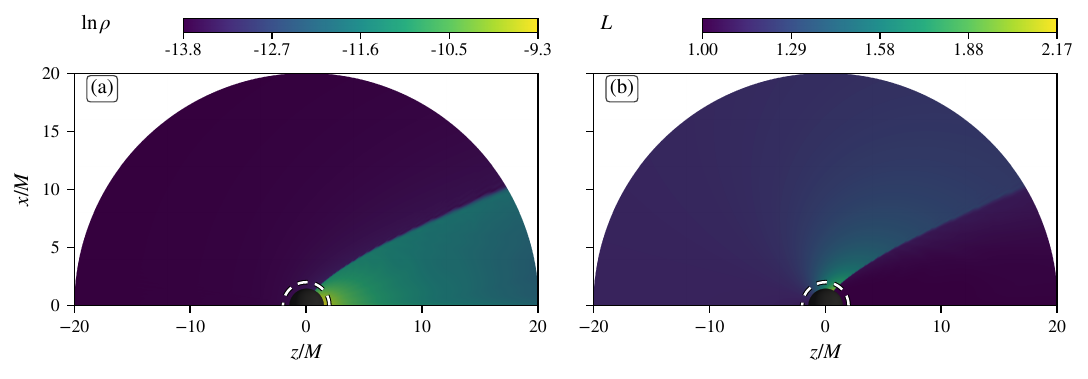}
 \caption{Axisymmetric Schwarzschild M1a Bondi--Hoyle flow at $t=500M$,
 restricted for display to $r\le20M$.  Panel (a) shows $\ln\rho$ over its
 displayed-data range $[-13.78,-9.35]$, while panel (b) shows the Lorentz
 factor $L$ over its displayed-data range $[1.00,2.17]$.
 For a compact presentation, $z/M$ is plotted horizontally and $x/M$
 vertically, where $x=r\sin\theta$ and $z=r\cos\theta$.  The black region denotes
 the excised core $r\le1.5M$, while the dashed semicircle marks the event
 horizon at $r=2M$.  The computed solution is therefore visible on both sides
 of the horizon.  The computation uses $\mathbb Q_2$ elements on a $256\times128$
 uniform $(r,\theta)$ mesh with ${\rm CFL}=0.8$ and
 $s_{\mathrm{OE}}=0.02$.}
\label{fig:schwarzschild-m1a}
\end{figure}
\FloatBarrier

\subsection{Equatorial Kerr--Schild Bondi--Hoyle accretion}
\label{sec:numerics-kerr-bondi-hoyle}

Our final numerical example is the four-spin equatorial Kerr accretion test of
Font, Ib\'a\~nez, and Papadopoulos~\cite{FontIbanezPapadopoulos1999}.  We
solve the equatorial Kerr--Schild system~\eqref{eq:rphi-reduced-balance} in
the active coordinates $(r,\widetilde\phi)$ over
\begin{equation}
 (r,\widetilde\phi)\in[r_{\min},50.9M]\times[0,2\pi),
 \label{eq:numerics-kerr-domain}
\end{equation}
with periodic identification in $\widetilde\phi$.  The common asymptotic data
are
\begin{equation}
 \begin{aligned}
 M&=1, & \Gamma&=\frac53, & \rho_\infty&=1,
 &v_\infty&=0.5,\\
 \mathcal M_\infty&=5, & c_{s,\infty}&=0.1,
 &p_\infty&=\frac6{985}.
 \end{aligned}
 \label{eq:numerics-kerr-asymptotic-data}
\end{equation}
The metric-adapted velocity field is initialized from the uniform asymptotic
wind specified in~\cite{FontIbanezPapadopoulos1999}.  The same data are held
fixed at the outer boundary, while the inner radial boundary is transmissive.

\begin{figure}[!b]
 \centering
 \includegraphics[width=0.78\textwidth]
 {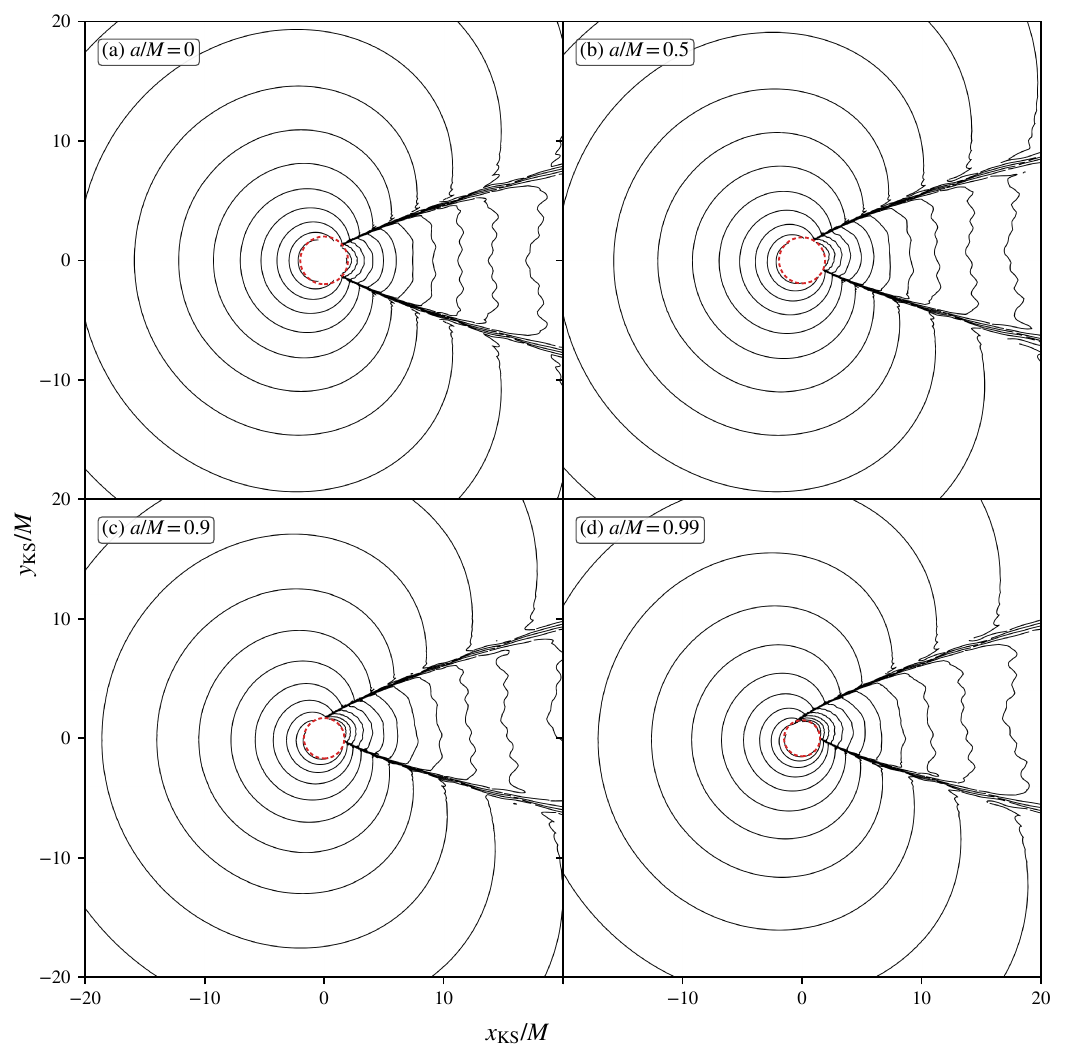}
 \caption{Contours of $\log_{10}(\rho/\rho_\infty)$ for equatorial
 Kerr--Schild Bondi--Hoyle accretion at $t=500M$, displayed in Kerr--Schild Cartesian coordinates over
 $[-20M,20M]^2$.  Panels (a)--(d) correspond to $a/M=0$, $0.5$, $0.9$, and
 $0.99$, respectively.  Each panel contains 20 uniformly spaced levels, with
 a common minimum of $-0.16$ and
 case-dependent maxima of $1.96$, $1.97$, $2.13$, and $2.27$.  The dotted red
 circle marks the event horizon.  All runs use $\mathbb Q_2$ elements on
 $200\times160$ uniform $(r,\widetilde\phi)$ meshes with ${\rm CFL}=0.8$ and
 $s_{\mathrm{OE}}=0.02$.}
 \label{fig:kerr-four-spin-ks-contours}
\end{figure}

The four spin values, excision radii, and horizon radii are
\begin{equation}
\begin{gathered}
 \begin{array}{c|cccc}
 \text{case} & 1 & 2 & 3 & 4 \\ \hline
 a/M        & 0 & 0.5 & 0.9 & 0.99 \\
 r_{\min}/M & 1.8 & 1.8 & 1.4 & 1.0 \\
 r_+/M      & 2 & 1.866025 & 1.435890 & 1.141067
 \end{array},\\[2pt]
 r_+=M+\sqrt{M^2-a^2}.
\end{gathered}
\label{eq:numerics-kerr-cases}
\end{equation}
Thus, the inner boundary lies inside the corresponding event horizon in every
case.

Here $w=r>0$ throughout the computational annulus.  Each case uses $\mathbb Q_2$
elements on a $200\times160$ uniform $(r,\widetilde\phi)$ mesh with
${\rm CFL}=0.8$ and $s_{\mathrm{OE}}=0.02$.  We first examine the solution in
the equatorial Kerr--Schild Cartesian coordinates
$(x_{\rm KS},y_{\rm KS})$, defined by
$x_{\rm KS}+\mathrm{i}y_{\rm KS}=(r+\mathrm{i}a)
\exp(\mathrm{i}\widetilde\phi)$.  Figure~\ref{fig:kerr-four-spin-ks-contours}
shows the normalized log-density over $[-20M,20M]^2$.  A well-defined tail
shock forms in every case, and the large-scale cone changes little with spin;
the rotational influence is concentrated near the black hole.  The shock
opening and downstream contour pattern agree closely with the Kerr--Schild
results in Fig.~3 of \cite{FontIbanezPapadopoulos1999}.
\FloatBarrier

For comparison with the Boyer--Lindquist-coordinate presentation of
\cite{CaoPengWu2025}, Figure~\ref{fig:kerr-four-spin-bl-contours} replots the
same solutions in Boyer--Lindquist Cartesian coordinates.

\begin{figure}[!b]
 \centering
 \includegraphics[width=0.78\textwidth]
 {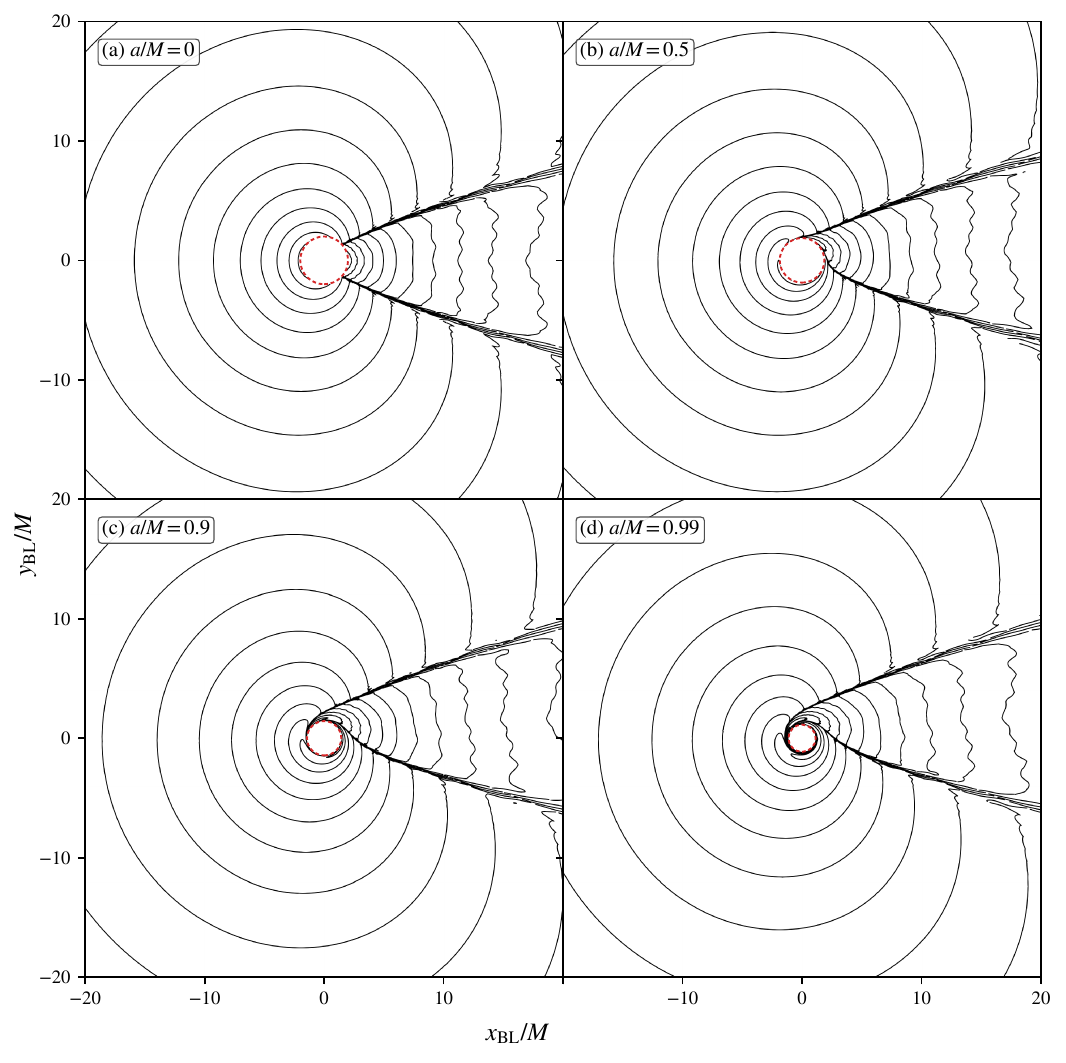}
 \caption{Contours of $\log_{10}(\rho/\rho_\infty)$ for the four Kerr-accretion
 cases in Figure~\ref{fig:kerr-four-spin-ks-contours}, displayed in Boyer--Lindquist
 Cartesian coordinates over $[-20M,20M]^2$.  The corresponding panels in the
 two figures use the same 20 contour levels.  The dotted red circle marks the
 event horizon.}
 \label{fig:kerr-four-spin-bl-contours}
\end{figure}

The plotting coordinates are defined by
\begin{equation}
 \begin{aligned}
 x_{\rm BL}&=r\cos\phi, &
 y_{\rm BL}&=r\sin\phi,\\
 \phi&=\widetilde\phi-\frac{a}{r_+-r_-}
 \log\!\left(\frac{r-r_+}{r-r_-}\right), &
 r_-&=M-\sqrt{M^2-a^2},
 \end{aligned}
 \label{eq:numerics-kerr-plot-map}
\end{equation}
where the additive constant is fixed by
$\phi-\widetilde\phi\to0$ as $r\to\infty$.  Since the Boyer--Lindquist azimuth
is singular at the event horizon, the visualization is restricted to
$r>r_+$. Over the $[-20M,20M]^2$ window, the large-scale cone morphology and the
increasingly pronounced rotational
distortion near the black hole agree qualitatively with Fig.~14 of
\cite{CaoPengWu2025}.
\FloatBarrier

%% file: sections/08_conclusions.tex
\section{Conclusions}\label{sec:conclusions}

We developed a compatible entropy-stable and physical-constraint-preserving
DGSEM for two-dimensional symmetry-reduced GRHD on prescribed stationary
spacetimes.  The local orthonormal transformation separates the standard SRHD
fluid algebra from the stationary geometry, while retaining the physical
reduction weight in the conservative measure.  Discretizing the two-point
flux and geometric source with the same nodal geometry yields the required
flux--source cancellation and a semidiscrete entropy inequality for periodic
problems on conforming affine tensor-product meshes.  The same transformed
variables provide a metric-independent admissible cone, conservative weighted
PCP limiting, and a
regular treatment of simple-zero symmetry axes.  Moreover, a geometry-only
causal speed supplies sufficient dissipation for both the LLF entropy
condition and the PCP splitting.

The fully discrete method combines SSPRK$(3,3)$ time stepping with OE and PCP
scaling.  The numerical
experiments recover high-order accuracy for smooth SRHD and Michel flows and
robustly resolve multidimensional shocks, an axisymmetric relativistic jet,
and Schwarzschild and Kerr accretion flows.  Together, these results support
the proposed framework as a practical high-order method for relativistic
flows on curved stationary backgrounds.

Future work will address curvilinear meshes with compatible discrete metric identities and adaptive mesh refinement with nonconforming interfaces.
Further extensions to general time-dependent
spacetime metrics, fully three-dimensional discretizations, and related
general-relativistic systems---including magnetohydrodynamics, radiation
hydrodynamics, and coupled spacetime--fluid evolution---are also of interest.

%% file: appendices/app_symmetry_reduction.tex
\section{Explicit source terms for the black-hole geometries}
\label{app:black-hole-sources}

\subsection{Axisymmetric Schwarzschild Bondi--Hoyle GRHD}
\label{app:sb-rphi-details}

For the axisymmetric Schwarzschild model \eqref{eq:schwarzschild-bh-balance},
the suppressed-direction contribution $\mathbb S_\perp$ is given in
\eqref{eq:schwarzschild-bh-suppressed-source}. The intrinsic source has the
shared form \eqref{eq:black-hole-active-source}; it remains only to evaluate
$\mathcal S_{\rm V}^{\rm act}$ and $\partial_r\mathcal L$ for the
Schwarzschild geometry.

After removing the azimuthal momentum equation, write
\begin{equation}
 \mathcal S_{\rm V}^{\rm act}
 =(0,R_r^{\rm act},0,R_E^{\rm act})^T.
 \label{eq:app-schwarzschild-active-valencia-source}
\end{equation}
Let $\mathcal I_{\rm act}=\{0,r,\theta\}$. From
\eqref{eq:valencia-source}, define
\begin{align}
 R_r^{\rm act}
 &:=\frac12\sum_{\mu,\nu\in\mathcal I_{\rm act}}
 T^{\mu\nu}\partial_r g_{\mu\nu},
 \label{eq:app-schwarzschild-active-r-source-definition}\\
 R_E^{\rm act}
 &:=T^{r0}\partial_r\alpha
 -\alpha\sum_{\mu,\nu\in\mathcal I_{\rm act}}
 T^{\mu\nu}\Gamma^0_{\mu\nu}.
 \label{eq:app-schwarzschild-active-E-source-definition}
\end{align}
The active spacetime block is independent of $\theta$, so the active
$\theta$-momentum source vanishes.

For the Eddington--Finkelstein Schwarzschild metric in
\eqref{eq:schwarzschild-bh-lapse-shift}--
\eqref{eq:schwarzschild-bh-spatial-metric}, evaluation of the metric
derivatives gives
\begin{equation}
 R_r^{\rm act}
 =-\frac{M}{r^2}\left(T^{00}+2T^{0r}+T^{rr}\right)
  +rT^{\theta\theta}.
 \label{eq:app-schwarzschild-active-radial-source}
\end{equation}
Moreover,
\[
 \partial_r\alpha=\frac{\alpha M}{r(r+2M)},
\]
and the active energy source is
\begin{align}
 R_E^{\rm act}
 ={}&\frac{\alpha M}{r(r+2M)}T^{r0}
 \notag\\
 &-\frac{2M\alpha}{r^3}
 \left[
   M T^{00}+(r+2M)T^{0r}
   +(r+M)T^{rr}-r^3T^{\theta\theta}
 \right].
 \label{eq:app-schwarzschild-active-energy-source}
\end{align}

For the orthonormal factor in \eqref{eq:schwarzschild-bh-frame},
\begin{equation}
 \partial_r\Theta
 =\begin{pmatrix}
  \dfrac{\alpha M}{r(r+2M)}&0\\[2mm]
  0&-r^{-2}
 \end{pmatrix},
 \qquad
 \partial_r\mathcal L
 =\operatorname{diag}(0,\partial_r\Theta,0).
 \label{eq:app-schwarzschild-L-derivative}
\end{equation}
Substitution of \eqref{eq:app-schwarzschild-active-valencia-source},
\eqref{eq:app-schwarzschild-active-r-source-definition}, \eqref{eq:app-schwarzschild-active-E-source-definition}, and
\eqref{eq:app-schwarzschild-L-derivative} into
\eqref{eq:black-hole-active-source} determines $\mathcal S_W$ for the
Schwarzschild model. Together with
\eqref{eq:schwarzschild-bh-suppressed-source}, this completely specifies the
source in \eqref{eq:schwarzschild-bh-balance}.

\subsection{Equatorial Kerr--Schild GRHD}
\label{app:kerr-rphi-details}

For the equatorial Kerr model \eqref{eq:rphi-reduced-balance}, the
suppressed-direction contribution $\mathbb S_\perp$ is already given
explicitly in \eqref{eq:kerr-rphi-suppressed-source}. Hence, by
\eqref{eq:black-hole-active-source}, it remains only to evaluate the active
Valencia source $\mathcal S_{\rm V}^{\rm act}$ and the radial frame derivative
$\partial_r\mathcal L$.

After removing the polar momentum equation, write
\begin{equation}
 \mathcal S_{\rm V}^{\rm act}
 =(0,R_r^{\rm act},0,R_E^{\rm act})^T.
 \label{eq:app-kerr-active-valencia-source}
\end{equation}
Let $\mathcal I_{\rm act}=\{0,r,\widetilde\phi\}$ denote the active spacetime
indices. From \eqref{eq:valencia-source}, define
\begin{align}
 R_r^{\rm act}
 &:=\frac12\sum_{\mu,\nu\in\mathcal I_{\rm act}}
 T^{\mu\nu}\partial_r g_{\mu\nu},
 \label{eq:app-kerr-active-r-source-definition}\\
 R_E^{\rm act}
 &:=T^{r0}\partial_r\alpha
 -\alpha\sum_{\mu,\nu\in\mathcal I_{\rm act}}
 T^{\mu\nu}\Gamma^0_{\mu\nu}.
 \label{eq:app-kerr-active-E-source-definition}
\end{align}
Since the metric is independent of $\widetilde\phi$, the azimuthal momentum
source vanishes.

Evaluating the metric derivatives gives
\begin{align}
 R_r^{\rm act}
 ={}&-\frac{M}{r^2}
 \left(T^{00}+2T^{0r}+T^{rr}\right)
 \notag\\
 &+\frac{2Ma}{r^2}
 \left(T^{0\widetilde\phi}+T^{r\widetilde\phi}\right)
 +\left(r-\frac{Ma^2}{r^2}\right)
 T^{\widetilde\phi\widetilde\phi}.
 \label{eq:app-kerr-active-radial-source}
\end{align}
Moreover,
\[
 \partial_r\alpha=\frac{\alpha M}{r(r+2M)},
\]
and the active energy source becomes
\begin{align}
 R_E^{\rm act}
 ={}&\frac{\alpha M}{r(r+2M)}T^{r0}
 -\frac{2M\alpha}{r^3}\Bigl[
 M T^{00}+(r+2M)T^{0r}
 \notag\\
 &\quad
 -2MaT^{0\widetilde\phi}
 +(r+M)T^{rr}
 -a(r+2M)T^{r\widetilde\phi}
 \notag\\
 &\quad
 +(Ma^2-r^3)T^{\widetilde\phi\widetilde\phi}
 \Bigr].
 \label{eq:app-kerr-active-energy-source}
\end{align}

It remains to evaluate the derivative of the local frame. For the
orthonormal factor $\Theta$ in \eqref{eq:kerr-rphi-theta},
\[
 \partial_r\mathcal K=-\frac{2M}{r^2},
 \qquad
 \partial_r\mathcal B=2r-\frac{2Ma^2}{r^2},
\]
and the nonzero entries of $\partial_r\Theta$ are
\begin{align}
 \partial_r\Theta_{11}
 &=\Theta_{11}\left(
   \frac{\partial_r\mathcal B}{2\mathcal B}
  -\frac{\partial_r\mathcal K}{2\mathcal K}
  -\frac1r\right),
 \label{eq:app-kerr-theta11-derivative}\\
 \partial_r\Theta_{12}
 &=\Theta_{12}\left(
   \frac{\partial_r\mathcal K}{2\mathcal K}
  -\frac{\partial_r\mathcal B}{2\mathcal B}
  -\frac1r\right),
 \label{eq:app-kerr-theta12-derivative}\\
 \partial_r\Theta_{22}
 &=-\Theta_{22}\frac{\partial_r\mathcal B}{2\mathcal B}.
 \label{eq:app-kerr-theta22-derivative}
\end{align}
Therefore
\begin{equation}
 \partial_r\mathcal L
 =\operatorname{diag}(0,\partial_r\Theta,0).
 \label{eq:app-kerr-L-derivative}
\end{equation}
Substitution of \eqref{eq:app-kerr-active-valencia-source},
\eqref{eq:app-kerr-active-r-source-definition},
\eqref{eq:app-kerr-active-E-source-definition}, and
\eqref{eq:app-kerr-L-derivative} into
\eqref{eq:black-hole-active-source} determines $\mathcal S_W$. Together with
$\mathbb S_\perp$ in \eqref{eq:kerr-rphi-suppressed-source}, this completely
specifies the source in the equatorial Kerr system
\eqref{eq:rphi-reduced-balance}.

%% file: appendices/app_local_ec.tex
\section{Local SRHD entropy-conservative states and fluxes}
\label{app:local-ec}

For positive \(x_L,x_R\), define the logarithmic mean
\[
 x^{\ln}=\frac{x_R-x_L}{\log x_R-\log x_L},
\]
evaluated with a stable near-equal formula, and use an overbar for the
arithmetic mean.  Let
\[
 \zeta=\rho/p,\qquad K_a=L\vhat_a,\qquad
 \mathfrak a=1+\frac{1}{(\Gamma-1)\zeta^{\ln}}.
\]

\subsection{Temporal EC state}

Set
\[
 Q_t=\overline L^{\,2}-\overline K_a\overline K_a.
\]
The symmetric temporal state is
\begin{align}
 D^\#&=\rho^{\ln}\overline L,\label{eq:temporal-D}\\
 E^\#&=
 \frac{\mathfrak a\rho^{\ln}\overline L^{\,2}
 +(\overline\rho/\overline\zeta)
 \overline K_a\overline K_a}{Q_t},\label{eq:temporal-E}\\
 m_a^\#&=\frac{\overline K_a}{\overline L}
 \left(E^\#+\frac{\overline\rho}{\overline\zeta}\right).
 \label{eq:temporal-m}
\end{align}
Then
\[
 \uhat_{LR}^{\#}=(D^\#,m_1^\#,\ldots,m_d^\#,E^\#)^T.
\]
The identity \(L^2-K_aK_a=1\) and the entropy-variable jump relations give
\(\jump V^T\uhat_{LR}^{\#}=\jump D\).  At equal states,
\(\uhat_{LR}^{\#}=\uhat\).

\subsection{Directional spatial EC flux}

Define
\[
 \overline v_a=\mean{\vhat_a},\qquad
 \overline K_a=\mean{L\vhat_a},\qquad
 \overline L=\mean L,
\]
\[
 \chi_a=
 \frac{\vhat_{a,L}+\vhat_{a,R}}
 {L_L^{-1}L_R^{-1}(L_L^{-1}+L_R^{-1})},
\]
\[
 \mathcal R=\overline L^{\,2}
 +\overline L\,\overline v_b\chi_b
 -\overline K_b\chi_b,\qquad
 \mathcal Q=\overline\zeta\,\mathcal R.
\]
For local flux direction \(a\), set
\begin{align}
 f_D^a&=\rho^{\ln}\overline K_a,\label{eq:ec-mass}\\
 f_{\mhat_b}^a&=
 \frac{\mathfrak a\,\overline\zeta\,\chi_b f_D^a
 +\overline\rho(\delta_{ab}\mathcal R+
 \overline K_a\chi_b)}{\mathcal Q},\label{eq:ec-momentum}\\
 f_E^a&=
 \frac{\mathfrak a f_D^a+
 \overline K_bf_{\mhat_b}^a}{\overline L}.
 \label{eq:ec-energy}
\end{align}
The vector assembled from these components is symmetric, consistent, and
satisfies
\[
 \jump V^T\fhat_{LR}^{a,\EC}=\jump{D\vhat_a}.
\]
Only contractions over the orthonormal index occur, so the formula has the
same form in two and three dimensions.  The formulas require admissible
endpoint primitives and stable evaluation of every logarithmic mean.

%% file: appendices/app_secondary_algebra.tex
\section{Flux-adapted geometry variables}
\label{app:geometry-state-map}

Let
\begin{equation}
 X=(\alpha,\beta^1,\beta^2,
 \gamma_{11},\gamma_{12},\gamma_{22})^T
 \label{eq:dg-adm-geometry-state}
\end{equation}
denote the active ADM geometry.  The physical reduction weight $w$ is a
separate prescribed scalar field.  In two active dimensions the
upper-triangular Cholesky convention gives
\[
 \det\Theta=\frac1{j_2}.
\]
The flux-adapted geometry state is
\begin{equation}
 Y=(j_2,A_{11},A_{12},A_{22},C^1,C^2)^T,
 \label{eq:dg-y-geometry-state-app}
\end{equation}
where
\begin{equation}
 j_2=\sqrt{\det(\gamma_{IJ})},
 \qquad A=\alpha j_2\Theta,
 \qquad C^I=j_2\beta^I.
 \label{eq:dg-geometry-forward-map}
\end{equation}
Since
\[
 \det A=(\alpha j_2)^2\det\Theta=\alpha^2j_2,
\]
the inverse map is
\begin{equation}
 \alpha=\sqrt{\frac{\det A}{j_2}},
 \qquad \beta^I=\frac{C^I}{j_2},
 \qquad \Theta=\frac{A}{\alpha j_2},
 \qquad
 \gamma=(\Theta^T\Theta)^{-1}.
 \label{eq:dg-geometry-inverse-map}
\end{equation}
Thus $X\leftrightarrow Y$ is a smooth one-to-one local change of the active
geometry coordinates for $j_2>0$.
The complete flux coefficients are formed only in the forward direction,
\begin{equation}
 \begin{aligned}
  \mathcal A&=wA,
  &\mathcal C&=wC,\\
  d_I\mathcal A&=w\,d_IA+A\,d_Iw,
  &d_I\mathcal C&=w\,d_IC+C\,d_Iw.
 \end{aligned}
 \label{eq:dg-complete-coefficient-differentials}
\end{equation}

For later use, let $d_I Y$ be any directional increment of the intrinsic
geometry state.  Differentiating \eqref{eq:dg-geometry-inverse-map} gives
\begin{align}
 d_I\log\alpha
 &=\frac12\left[
   \operatorname{tr}(A^{-1}d_IA)
   -\frac{d_Ij_2}{j_2}\right],
 \label{eq:dg-directional-alpha}\\
 d_I\beta^J
 &=\frac{d_IC^J}{j_2}
   -\beta^J\frac{d_Ij_2}{j_2},
 \label{eq:dg-directional-beta}\\
 d_I\Theta
 &=\frac{d_IA}{\alpha j_2}
   -\Theta\left(\frac{d_I\alpha}{\alpha}
                 +\frac{d_Ij_2}{j_2}\right),
 \label{eq:dg-directional-theta}\\
 d_I\gamma
 &=-\gamma\left[(d_I\Theta)^T\Theta
                  +\Theta^T(d_I\Theta)\right]\gamma.
 \label{eq:dg-directional-gamma}
\end{align}
These are algebraic differentials of the local geometry map.

%% file: appendices/app_llf_speed_bound.tex
\section{Causal bound for classical-LLF entropy stability}
\label{app:llf-speed-bound}
\renewcommand{\thedefinition}{\Alph{section}.\arabic{definition}}

We prove Proposition~\ref{thm:pcp-dominates-entropy-speed}, namely that the
geometry-only causal speed in~\eqref{eq:dg-causal-normal-speed} is sufficient
to make the classical LLF flux entropy stable. Fix a positive-weight face
node and its single-valued stationary geometry. For the fixed face covector
$n$, write
\[
U:=\mathbb W,
\qquad
F(U):=\mathbb H_n(U),
\qquad
\eta(U):=\eta_w(U),
\qquad
\Psi(U):=\Psi_{w,n}(U),
\]
and let $V=\nabla_U\eta$ be the entropy variables. For two admissible states,
set
\begin{equation}
\mathcal C_{LR}
:=\jump{V}^T\frac{F_L+F_R}{2}-\jump{\Psi},
\qquad
\mathcal D_{LR}
:=\jump{V}^T\jump{U}.
\label{eq:llf-CD}
\end{equation}
The entropy defect of the classical LLF flux is therefore
\begin{equation}
\jump{V}^T\widehat{\mathbb H}^{\LLF}_n-\jump{\Psi}
=
\mathcal C_{LR}-\frac{a_f}{2}\mathcal D_{LR}.
\label{eq:llf-entropy-defect}
\end{equation}
Since $\eta$ is strictly convex,
$\mathcal D_{LR}>0$ whenever $U_L\ne U_R$. Thus it suffices to prove
\begin{equation}
|\mathcal C_{LR}|
\le
\frac{a_n^{\rm PCP}}{2}\mathcal D_{LR}.
\label{eq:llf-target-bound}
\end{equation}

\begin{lemma}[Convex entropy-variable domain]
\label{lem:entropy-variable-domain}
For the Gamma-law RHD entropy with $1<\Gamma\le2$, the entropy variables
associated with admissible local orthonormal states satisfy
\begin{equation}
\mathcal V
=
\left\{
V=(V_D,V_m,V_E):
V_D\in\mathbb R,\quad -V_E>|V_m|
\right\},
\label{eq:entropy-variable-domain}
\end{equation}
and hence $\mathcal V$ is convex.
\end{lemma}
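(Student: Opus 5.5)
The plan is to use the explicit formula \eqref{eq:entropy-variables} for the entropy variables, $V=(V_D,V_m,V_E)$ with $V_D=(\Gamma-s)/(\Gamma-1)+\zeta$, $V_m=\zeta L\vhat$ and $V_E=-\zeta L$, where $\zeta=\rho/p>0$. Two inclusions are needed: admissible states map into $\mathcal V$, and every point of $\mathcal V$ is attained. Once the set equality holds, convexity is immediate. The condition $-V_E>|V_m|$ says that the point $(V_m,V_E)$ lies in the open Lorentz cone $\{(x,t):-t>|x|\}$. That cone is the strict superlevel set of the concave, positively homogeneous function $-t-|x|$, exactly as in Lemma~\ref{lem:pcp-cone}. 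The remaining component $V_D$ ranges over all of $\mathbb R$, so $\mathcal V$ is a product of a convex cone with $\mathbb R$.

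For the forward inclusion, take an admissible $\uhat\in\Gset$. The equivalence recalled after \eqref{eq:cone} gives $\rho>0$, $p>0$ and $|\vhat|<1$. Then
\[
 -V_E-|V_m|=\zeta L\,(1-|\vhat|)>0 .
\]
This is the only place where subluminality enters.

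For the reverse inclusion, fix $(V_D,V_m,V_E)\in\mathcal V$ and construct the preimage step by step.
\begin{enumerate}
\item Set $\zeta L:=-V_E>0$ and $\vhat:=V_m/(-V_E)$, so that $|\vhat|<1$.
\item Compute $L=(1-|\vhat|^2)^{-1/2}$. This gives $\zeta=\sqrt{V_E^2-|V_m|^2}>0$.
\item Use $V_D=(\Gamma-s)/(\Gamma-1)+\zeta$ to fix $s=\Gamma-(\Gamma-1)(V_D-\zeta)$. This is possible because $s$ ranges over all of $\mathbb R$.
\item Solve the two equations $\rho/p=\zeta$ and $\log p-\Gamma\log\rho=s$. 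Substituting $p=\rho/\zeta$ gives $(1-\Gamma)\log\rho=s+\log\zeta$, which has a unique solution $\rho>0$ because $\Gamma>1$. Then $p=\rho/\zeta>0$.
\end{enumerate}
The resulting primitive state $(\rho,\vhat,p)$ is admissible and maps to the prescribed $V$. The map $\uhat\mapsto V$ is therefore onto $\mathcal V$, and it is injective because $\widehat\eta$ is strictly convex.

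I expect the main obstacle to be bookkeeping rather than any real difficulty. The formula \eqref{eq:entropy-variables} is taken as given; I would not re-verify the gradient computation. The construction shows that the restriction $1<\Gamma\le2$ is used only through $\Gamma>1$ in step 4 of the inversion, and $\Gamma\le 2$ is not needed for this lemma.
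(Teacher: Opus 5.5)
Your proposal is correct and follows essentially the same route as the paper. Both arguments get the forward inclusion from $V_m=\zeta L\vhat$, $V_E=-\zeta L$ with $|\vhat|<1$. Both invert explicitly via $\vhat=-V_m/V_E$, $\zeta=\sqrt{V_E^2-|V_m|^2}$ and $s=\Gamma-(\Gamma-1)(V_D-\zeta)$, then recover $\rho,p$. Both conclude convexity from the product of $\mathbb R$ with an open Lorentz cone. Your explicit solve $(1-\Gamma)\log\rho=s+\log\zeta$ and your remark that only $\Gamma>1$ is used add detail that the paper leaves implicit.
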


\begin{proof}
From~\eqref{eq:entropy-variables},
\[
V_m=\zeta L\widehat v,
\qquad
V_E=-\zeta L,
\qquad
\zeta=\frac{\rho}{p}>0,
\]
so every admissible state satisfies $-V_E>|V_m|$. Conversely, this inequality
gives
\[
\widehat v=-\frac{V_m}{V_E},
\qquad
\zeta=\sqrt{V_E^2-|V_m|^2}>0.
\]
The remaining component determines
\[
s=\Gamma-(\Gamma-1)(V_D-\zeta),
\]
and hence uniquely determines positive $\rho$ and $p$ through
$s=\log p-\Gamma\log\rho$ and $\zeta=\rho/p$. Thus
\eqref{eq:entropy-variable-domain} is the exact entropy-variable image of the
admissible set. It is the product of $\mathbb R$ and the interior of a
Lorentz cone, and is therefore convex.
\end{proof}

\begin{lemma}[Entropy defect along an entropy-variable path]
\label{lem:entropy-threshold-characteristic-bound}
Let $U_L,U_R$ be admissible states at fixed geometry and define
\[
V(\theta)=V_L+\theta\jump{V},
\qquad
U(\theta)=U(V(\theta)),
\qquad
0\le\theta\le1.
\]
Then
\begin{equation}
|\mathcal C_{LR}|
\le
\frac12
\left(
\max_{0\le\theta\le1}
\rho\!\left(\mathsf J_F(\theta)\right)
\right)
\mathcal D_{LR},
\qquad
\mathsf J_F(\theta)
:=\frac{\partial F}{\partial U}(U(\theta)).
\label{eq:entropy-threshold-path-bound}
\end{equation}
\end{lemma}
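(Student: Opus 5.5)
The plan is to parametrize everything by the entropy variables, using Lemma~\ref{lem:entropy-variable-domain}: the segment $V(\theta)$ stays in the convex set $\mathcal V$, so $U(\theta)$ is admissible and smooth in $\theta$. Write $\varphi(V):=V^TU(V)-\eta(U(V))$ for the entropy potential, with $\nabla_V\varphi=U$, and note that $\Psi=V^TF-q$ satisfies $\nabla_V\Psi=F(U(V))$. Both identities follow from $d\eta=V^TdU$ and $dq=V^TdF$ at fixed geometry, with the weight and geometry factors held constant. Set $g(\theta):=\jump{V}^TF(U(\theta))$ and $u(\theta):=\jump{V}^TU(\theta)$. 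Then
\[
\jump{\Psi}=\int_0^1 g(\theta)\dd\theta,
\qquad
\mathcal C_{LR}=\frac{g(0)+g(1)}{2}-\int_0^1 g(\theta)\dd\theta,
\]
which is the trapezoidal-rule error for $g$. Integrating by parts against the kernel $\theta-\tfrac12$ gives
\[
\mathcal C_{LR}=\int_0^1\Bigl(\theta-\tfrac12\Bigr)g'(\theta)\dd\theta .
\]
Likewise $\mathcal D_{LR}=\int_0^1u'(\theta)\dd\theta$, with $u'(\theta)=\jump{V}^TH(\theta)\jump{V}\ge0$, where $H:=\partial U/\partial V$ is symmetric positive definite (strict convexity of $\eta$).

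The key step is bounding $g'$ pointwise by $u'$. We have $g'(\theta)=\jump{V}^T\mathsf J_F H\jump{V}$. By the standard symmetrization property (Godunov--Mock), $\mathsf J_FH$ is symmetric, since it is the Hessian of $\Psi$ in $V$. Hence $H^{-1/2}\mathsf J_FH^{1/2}$ is symmetric and similar to $\mathsf J_F$, so its eigenvalues are real and bounded in modulus by $\rho(\mathsf J_F)$. Writing $y=H^{1/2}\jump{V}$ then gives
\[
|g'(\theta)|=|y^TH^{-1/2}\mathsf J_FH^{1/2}y|\le\rho(\mathsf J_F(\theta))\,u'(\theta).
\]
Combining this with $|\theta-\tfrac12|\le\tfrac12$ yields
\[
|\mathcal C_{LR}|\le\tfrac12\max_\theta\rho(\mathsf J_F(\theta))\int_0^1u'\dd\theta,
\]
which is \eqref{eq:entropy-threshold-path-bound}.

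The main technical point is justifying $\nabla_V\Psi=F$ and the symmetry of $\mathsf J_FH$ for the weighted, geometry-dependent flux $F=\mathbb H_n$. I would verify these directly from \eqref{eq:dg-weighted-flux} and \eqref{eq:dg-weighted-entropy-flux}. At fixed geometry, $F$ is a linear combination $\mathcal A_{aI}n_I\fhat^a-\mathcal C^In_I\uhat$ with $\uhat=\mathbb W/(wj_2)$, and $\Psi$ is the same combination of $\psi_a$ and $\Phi$. The local SRHD relations $\nabla_V\psi_a=\fhat^a$ and $\nabla_V\Phi=\uhat$ then give the claim, because $V$ is unchanged by the positive scaling. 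Symmetry of $\mathsf J_FH$ follows since it equals $\nabla_V^2\Psi$. A secondary point is smoothness of $U(V)$ along the path, which holds because $\mathcal V$ is open and the map is a diffeomorphism onto the admissible set.
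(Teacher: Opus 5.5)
Your proposal is correct and follows essentially the same route as the paper: the trapezoidal-error identity obtained by integrating by parts against $\theta-\tfrac12$, the symmetry of $\mathsf J_F\,\partial U/\partial V=\nabla_V^2\Psi$ giving the similarity/Rayleigh bound $|g'|\le\rho(\mathsf J_F)\,\jump{V}^T(\partial U/\partial V)\jump{V}$, and then $|\theta-\tfrac12|\le\tfrac12$. Your explicit check that $\nabla_V\Psi=F$ for the weighted flux, via $\nabla_V\psi_a=\fhat^a$, $\nabla_V\Phi=\uhat$ and the invariance of $V$ under the positive scaling by $wj_2$, simply spells out the ``entropy compatibility'' step that the paper asserts without detail.
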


\begin{proof}
By Lemma~\ref{lem:entropy-variable-domain}, the straight segment $V(\theta)$
remains in the admissible entropy-variable domain. Entropy compatibility and
$\Psi=V^TF-q_\eta$ give
\[
\dd\Psi=F^T\dd V.
\]
Hence, with
\[
g(\theta):=\jump{V}^TF(U(\theta)),
\]
we have $\jump{\Psi}=\int_0^1g(\theta)\,\dd\theta$, and integration by parts
yields the trapezoidal-error identity
\begin{equation}
\mathcal C_{LR}
=
\int_0^1
\left(\theta-\frac12\right)g'(\theta)\,\dd\theta.
\label{eq:llf-C-path}
\end{equation}

Let
\[
K_\eta(V)
:=\frac{\partial U}{\partial V}
=\left(\nabla_U^2\eta\right)^{-1}.
\]
Strict convexity gives $K_\eta=K_\eta^T>0$, while
\[
\mathsf J_FK_\eta
=
\frac{\partial F}{\partial V}
=
\nabla_V^2\Psi
\]
is symmetric. Therefore
$K_\eta^{-1/2}\mathsf J_FK_\eta^{1/2}$ is symmetric and similar to
$\mathsf J_F$, and consequently
\begin{equation}
\left|
z^T\mathsf J_FK_\eta z
\right|
\le
\rho(\mathsf J_F)\,z^TK_\eta z
\qquad
\text{for all }z.
\label{eq:entropy-symmetrizer-rayleigh}
\end{equation}
Since
\[
g'(\theta)
=
\jump{V}^T
\mathsf J_F(\theta)K_\eta(\theta)
\jump{V}
\]
and
\begin{equation}
\mathcal D_{LR}
=
\int_0^1
\jump{V}^TK_\eta(\theta)\jump{V}\,\dd\theta,
\label{eq:llf-D-path}
\end{equation}
using $|\theta-\tfrac12|\le\tfrac12$ in~\eqref{eq:llf-C-path} and then
\eqref{eq:entropy-symmetrizer-rayleigh} proves
\eqref{eq:entropy-threshold-path-bound}.
\end{proof}

\paragraph{Proof of Proposition~\ref{thm:pcp-dominates-entropy-speed}.}
Let
\[
A_{an}:=A_{aI}n_I,
\qquad
C_n:=C^In_I.
\]
At fixed positive geometry,
\[
U=wj_2\widehat U,
\qquad
F(U)
=
w\left(
A_{an}\widehat F^a(\widehat U)
-C_n\widehat U
\right),
\]
and therefore
\begin{equation}
\mathsf J_F
=
\frac1{j_2}
\left(
A_{an}\frac{\partial\widehat F^a}{\partial\widehat U}
-C_n I_4
\right).
\label{eq:llf-normal-jacobian}
\end{equation}
Set
\[
|A_n|:=\sqrt{A_{an}A_{an}},
\qquad
\widehat n_a:=\frac{A_{an}}{|A_n|}.
\]
By rotational invariance of SRHD, the eigenvalues of
\eqref{eq:llf-normal-jacobian} are
\begin{equation}
\lambda_k(\mathsf J_F)
=
\frac{|A_n|\widehat\lambda_k(\widehat n)-C_n}{j_2},
\label{eq:llf-normal-eigenvalues}
\end{equation}
where $\widehat\lambda_k(\widehat n)$ are the local orthonormal SRHD
characteristic speeds in direction $\widehat n$. For the Gamma-law gas,
\[
c_s^2=\frac{\Gamma p}{\rho h}
<\Gamma-1\le1,
\]
so causality implies
$|\widehat\lambda_k(\widehat n)|\le1$. Using the intrinsic-geometry
identities
\[
\frac{|C_n|}{j_2}
=
|\beta^In_I|,
\qquad
\frac{|A_n|}{j_2}
=
\alpha\sqrt{\gamma^{IJ}n_In_J},
\]
we obtain, for every state along the entropy-variable path,
\begin{equation}
\rho(\mathsf J_F)
\le
|\beta^In_I|
+\alpha\sqrt{\gamma^{IJ}n_In_J}
=
a_n^{\rm PCP}.
\label{eq:llf-jacobian-causal-bound}
\end{equation}
Combining Lemma~\ref{lem:entropy-threshold-characteristic-bound} with
\eqref{eq:llf-jacobian-causal-bound} gives
\[
|\mathcal C_{LR}|
\le
\frac{a_n^{\rm PCP}}2\mathcal D_{LR}.
\]
Substitution into~\eqref{eq:llf-entropy-defect} with
$a_f=a_n^{\rm PCP}$ yields
\[
\jump{V}^T\widehat{\mathbb H}^{\LLF}_n-\jump{\Psi}
\le0,
\]
which proves the desired classical-LLF entropy inequality.
\hfill$\square$

%% file: appendices/app_simple_zero_treatment.tex
\section{Axis cells with vanishing reduction weight}
\label{app:simple-zero-treatment}

The analysis in Sections~\ref{sec:dgsem} and~\ref{sec:pcp} assumes $w>0$ at
the solution nodes.  In the axisymmetric models used in the numerical
experiments, however, the reduction weight vanishes on a symmetry axis: at
$r=0$ for the C2 jet and at $\theta=0,\pi$ for the Schwarzschild
calculations.  The weighted state remains regular at these locations.  Indeed,
although $\mathbb W=\chi\widehat U$ vanishes with
$\chi:=wj_2$, the underlying local orthonormal state $\widehat U$ generally
has a finite nonzero limit.  We describe here the discrete treatment used to
preserve this structure.

Let $\xi^I$ be the reference coordinate normal to the axis and let
$\xi^I=\xi^I_\star$ denote an axis GLL node.  Assume that $w$ has a simple
zero there and that $j_2$ is smooth and positive.  For a regular local state
$\widehat U$, the relation $\mathbb W=\chi\widehat U$ gives
\begin{align}
 \chi_\star&=0,\qquad
 (\partial_{\xi^I}\chi)_\star
 =j_{2,\star}(\partial_{\xi^I}w)_\star\ne0,\notag\\
 \chi(\xi^I)
 &=(\partial_{\xi^I}\chi)_\star(\xi^I-\xi^I_\star)
   +\mathcal O((\xi^I-\xi^I_\star)^2),\notag\\
 \mathbb W(\xi^I)
 &=(\partial_{\xi^I}\chi)_\star\widehat U_\star
   (\xi^I-\xi^I_\star)
   +\mathcal O((\xi^I-\xi^I_\star)^2),\notag\\
 \widehat U_\star
 &=\lim_{\xi^I\to\xi^I_\star}\frac{\mathbb W}{\chi}
 =\left.\frac{\partial_{\xi^I}\mathbb W}
                 {\partial_{\xi^I}\chi}\right|_\star.
 \label{eq:app-simple-zero-assumption}
\end{align}
Thus the densitized state has the exact axis value $\mathbb W_\star=0$, while
the finite intrinsic state is determined by a derivative ratio.

\paragraph{Regular discrete axis trace}
At the axis node we retain the exact stored condition $\mathbb W_\star=0$.
Whenever a finite local state is required, we recover it by the discrete
analogue of the derivative ratio in
\eqref{eq:app-simple-zero-assumption}, using the normal SBP derivative along
the same tensor-product nodal line:
\begin{equation}
 \mathbb W_\star=0,\qquad
 \mathcal T_\star^{\widehat U}(\mathbb W_h)
 :=\frac{(\mathsf D_I\mathbb W_h)_\star}
         {(\mathsf D_I\chi)_\star}.
 \label{eq:app-simple-zero-trace}
\end{equation}
The quantity $\mathcal T_\star^{\widehat U}$ is an auxiliary reconstruction
of the regular local state; it does not replace the stored axis value.  It is
used in adjacent volume and source evaluations and in stage stabilization.
Since the complete flux coefficients $\mathcal A=wA$ and $\mathcal C=wC$
also vanish on the axis, the weighted axis flux is zero and no axis Riemann
problem is formed.

\paragraph{Conservative axis repair}
A raw explicit stage can produce a nonzero axis value before stage
stabilization is applied.  Before constructing the regular trace and applying
OE or PCP scaling, we therefore restore $\mathbb W_\star=0$ conservatively on
each normal nodal line.  Let $\mathcal Z$ and $\mathcal P$ denote the sets of
zero- and positive-weight nodes, respectively, and let $\mu_i$ be the mapped
GLL mass weights.  Define
\begin{align}
 c^{\rm rep}
 &:=\frac{\sum_{z\in\mathcal Z}\mu_z\mathbb W_z}
          {\sum_{i\in\mathcal P}\mu_i},\qquad
 \mathbb W_z^{\rm rep}:=0,\qquad
 \mathbb W_i^{\rm rep}:=\mathbb W_i+c^{\rm rep}
 \quad(i\in\mathcal P),\notag\\
 \sum_i\mu_i\mathbb W_i^{\rm rep}
 &=\sum_i\mu_i\mathbb W_i.
 \label{eq:app-simple-zero-repair}
\end{align}
The axis defect is thereby redistributed among the positive-weight nodes on
the same line.  The repair restores the exact stored axis condition while
preserving the linewise GLL quadrature moment, and hence the conservative
element average.

\paragraph{OE on an axis cell}
After repair, OE is applied to the regular local state
\begin{equation}
 \widehat U_{\boldsymbol\ell}
 =\begin{cases}
 \mathbb W_{\boldsymbol\ell}^{\rm rep}/\chi_{\boldsymbol\ell},
 & \chi_{\boldsymbol\ell}>0,\\[1ex]
 \mathcal T_{\boldsymbol\ell}^{\widehat U}(\mathbb W_h^{\rm rep}),
 & \chi_{\boldsymbol\ell}=0.
 \end{cases}
 \label{eq:app-axis-oe-local-state}
\end{equation}
The jump indicators and damping factors are unchanged from
Section~\ref{sec:algorithm-oe}.  Only the $\chi$-weighted projections require
a convention at the zero-weight face.  For $0\le r\le N-1$,
$P_{\chi,K}^r$ is defined by the orthogonality condition in
\eqref{eq:oe-weighted-shell}, whereas
\begin{equation}
 P_{\chi,K}^N:=\mathrm{Id}.
 \label{eq:app-axis-oe-top-projection}
\end{equation}
For $r\le N-1$, the weighted Gram matrix remains positive definite: a
polynomial in $\mathbb Q_r(K)$ with zero weighted norm must vanish at all $N$
positive-weight GLL nodes on each normal line and therefore vanish
identically.  At $r=N$, the axis-face nodal mode lies in the null space of the
weighted form, which motivates
\eqref{eq:app-axis-oe-top-projection}.

With this convention, the hierarchical increments
\eqref{eq:oe-weighted-shell}, damping factors
\eqref{eq:oe-shell-retention}, and conservative map-back
\eqref{eq:oe-conservative-mapback} apply unchanged.  The top increment also
has zero $\chi$-weighted mean because $P_{\chi,K}^N=\mathrm{Id}$ and
$P_{\chi,K}^{N-1}$ preserves constants.  Hence, writing
$\mathbb W^{\rm OE}:=\mathcal O_K^{\rm cons}\mathbb W^{\rm rep}$,
\begin{equation}
 \sum_{\boldsymbol\ell}\mu_{K,\boldsymbol\ell}
 \mathbb W_{\boldsymbol\ell}^{\rm OE}
 =\sum_{\boldsymbol\ell}\mu_{K,\boldsymbol\ell}
 \mathbb W_{\boldsymbol\ell}^{\rm rep},\qquad
 \mathbb W_{\boldsymbol\ell}^{\rm OE}=0
 \quad\text{if }\chi_{\boldsymbol\ell}=0.
 \label{eq:app-axis-oe-properties}
\end{equation}
Thus OE acts on a finite local state without singular division while
preserving both the conservative element average and the exact stored axis
value.

\paragraph{Trace-aware PCP scaling}
The same trace construction is compatible with PCP scaling because
\eqref{eq:app-simple-zero-trace} is linear in the stored polynomial.  Let
$\widehat U_{A,K}$ and
$\mathbb W_h^{\rm ref}=\chi\widehat U_{A,K}$ be the anchor and reference
polynomial from \eqref{eq:pcp-weight-anchor}.  Then
$\mathcal T_\star^{\widehat U}(\mathbb W_h^{\rm ref})
=\widehat U_{A,K}$ and, for $0\le\vartheta\le1$,
\begin{equation}
 \mathcal T_\star^{\widehat U}
 \left(\mathbb W_h^{\rm ref}
 +\vartheta(\mathbb W_h-\mathbb W_h^{\rm ref})\right)
 =\widehat U_{A,K}
 +\vartheta\left[
 \mathcal T_\star^{\widehat U}(\mathbb W_h)-\widehat U_{A,K}
 \right].
 \label{eq:pcp-simple-zero-segment}
\end{equation}
Thus contraction of the stored polynomial toward the reference polynomial
induces the same convex contraction of the reconstructed axis state toward
$\widehat U_{A,K}$.  The three scaling factors of
Section~\ref{sec:pcp-weighted-scaling} are therefore computed over the
positive-weight nodal states together with the regular axis traces.  Since
both the current and reference polynomials vanish at zero-weight nodes, every
scaling stage also preserves the stored axis constraint.

\begin{theorem}[Conservative PCP scaling on an axis cell]
\label{thm:pcp-simple-zero-scaling}
Assume $\overline\chi_K>0$, $\overline{\mathbb W}_K\in\Gset$, and that every
axis-face zero of $\chi$ is simple in the sense of
\eqref{eq:app-simple-zero-assumption}.  After the repair
\eqref{eq:app-simple-zero-repair}, trace-aware three-stage PCP scaling
preserves $\overline{\mathbb W}_K$ and $\mathbb W_\star=0$ and places every
positive-weight nodal state and regular axis trace in $\Gset_K^{\rm cert}$.
If these states already satisfy the certification conditions, all three
scaling factors equal one.
\end{theorem}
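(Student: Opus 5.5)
The plan is to reduce Theorem~\ref{thm:pcp-simple-zero-scaling} to the argument of Theorem~\ref{thm:pcp-weighted-scaling}. The reduction applies that argument to the enlarged family of \emph{controlled local states}: the positive-weight quotients $\mathbb W_{\boldsymbol\ell}/\chi_{\boldsymbol\ell}$ together with the regular axis traces $\mathcal T_\star^{\widehat U}(\mathbb W_h)$. The central observation is that every member of this family is an affine function of the stored polynomial $\mathbb W_h$. Each member also maps the reference polynomial $\mathbb W_h^{\rm ref}=\chi\widehat U_{A,K}$ to the anchor $\widehat U_{A,K}$. For positive-weight nodes this holds trivially. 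For axis traces it follows from \eqref{eq:pcp-simple-zero-segment}, which uses linearity of $\mathsf D_I$ and $\mathcal T_\star^{\widehat U}(\chi\widehat U_{A,K})=(\mathsf D_I\chi)_\star\widehat U_{A,K}/(\mathsf D_I\chi)_\star$. The denominator is nonzero by the simple-zero assumption, at least for the discrete derivative at the resolution used; I would state this as part of the hypothesis. Hence every stage map $\mathbb W\mapsto\mathbb W^{\rm ref}+\vartheta(\mathbb W-\mathbb W^{\rm ref})$ induces the convex contraction $\widehat U\mapsto\widehat U_{A,K}+\vartheta(\widehat U-\widehat U_{A,K})$ on every controlled state.

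The proof then proceeds in four steps. First, conservation and the axis condition. After the repair \eqref{eq:app-simple-zero-repair}, the stored axis values vanish and the $\mu$-weighted sum is unchanged. The reference polynomial vanishes at zero-weight nodes since $\chi_\star=0$, and its $\mu$-weighted average equals $\overline\chi_K\widehat U_{A,K}=\overline{\mathbb W}_K$ by \eqref{eq:pcp-weight-anchor}. Each stage is a componentwise (density stage) or full-state affine contraction toward $\mathbb W^{\rm ref}$. It therefore preserves the average and keeps $\mathbb W_\star=0$, because both endpoints vanish there.

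Second, the density stage. The minimum $D^{(0)}_{\min}$ is taken over all controlled states. The $D$-component of each controlled state is an affine function of $(\mathbb W_h)_D$ alone, so the density-only contraction moves every controlled density along the segment toward $(\widehat U_{A,K})_D$. The choice of $\theta_D$ then yields $D\ge\varepsilon_{D,K}^{\rm eff}$. The formula for $\theta_D$ requires $(\widehat U_{A,K})_D>D^{(0)}_{\min}$ whenever $\theta_D<1$. This holds because in that case $D^{(0)}_{\min}<\varepsilon^{\rm eff}_{D,K}\le(\widehat U_{A,K})_D$, so no mean-value argument is needed. This observation matters because the axis trace is not a member of the weighted mean.

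Third, the cone and margin stages. These are handled by concavity of $q$ and of $g_{\delta_K}$ applied to the induced convex contraction, exactly as in Theorem~\ref{thm:pcp-weighted-scaling}. Since $q(\widehat U_{A,K})\ge\varepsilon^{\rm eff}_{q,K}$ and $g_{A,K}>0$, the denominators are positive whenever the factors are less than one. The previously imposed constraints persist because their superlevel sets are convex and contain the anchor.

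Fourth, the identity case. If all controlled states already satisfy the certification conditions, each case split selects unity.

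The main obstacle is the claim that the trace behaves affinely and reproduces the anchor, that is, \eqref{eq:pcp-simple-zero-segment} and its compatibility with the density-only stage. The trace mixes values of $\mathbb W_h$ along a whole normal line, so I must check two things. The first is that the density component of $\mathcal T_\star^{\widehat U}$ depends only on density components, which holds since $\mathsf D_I$ acts componentwise. The second is that the repair is performed before the trace is formed. If that linearity check went through cleanly, the rest would be a verbatim transcription of the positive-weight proof.
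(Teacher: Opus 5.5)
Your proposal is correct and follows the paper's proof: the anchor lies in the convex certification sets, \eqref{eq:pcp-simple-zero-segment} transfers each nodal contraction to the regular axis traces, the average is conserved because $\overline{\chi\widehat U_{A,K}}_K=\overline{\mathbb W}_K$, and $\mathbb W_\star=0$ is preserved because both the current and reference polynomials vanish there. Two of your additional remarks are sound refinements of details the paper leaves implicit: the denominators of $\theta_D$ and $\theta_q$ are positive directly from the effective floors, so no weighted-mean argument involving the zero-weight axis trace is needed, and the trace requires the discrete derivative $(\mathsf D_I\chi)_\star$ to be nonzero.
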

\begin{proof}
The anchor belongs to $\Gset$, and the three convex certification sets contain
it.  Equation~\eqref{eq:pcp-simple-zero-segment} extends each nodal
contraction to the regular axis traces.  Each stage preserves the conservative
average because
$\overline{\chi\widehat U_{A,K}}_K=\overline{\mathbb W}_K$, and it preserves
$\mathbb W_\star=0$ because the current and reference states both vanish
there.  The final claim follows from the definitions of the scaling factors.
\end{proof}